\title[]{On neighborhoods of embedded complex tori}
\author[]{Xianghong Gong$^{\dag}$}
\address{Department of Mathematics,
	University of Wisconsin-Madison, Madison, WI 53706, U.S.A.}
\email{gong@math.wisc.edu}
\author{Laurent Stolovitch$^{\dag\dag}$}
\address{CNRS and Laboratoire J.-A. Dieudonn\'e
	U.M.R. 7351, Universit\'e C\^ote d'Azur, Parc Valrose
	06108 Nice Cedex 02, France}
\email{Laurent.stolovitch@univ-cotedazur.fr}
\thanks{$^{\dag}$Partially supported by a grant from the Simons
	Foundation (award number: 505027) and NSF grant DMS-2054989. $^{\dag\dag}$Research of L. Stolovitch has been supported by the French government through the UCAJEDI Investments in the Future project managed by the National Research Agency (ANR) with the reference number ANR-15-IDEX-01.
}
\keywords{Complex torus, neighborhood of a complex manifold, Stein manifold, Grauert's Formal principle, normal bundle, small divisors, KAM scheme}
\subjclass[2020]{32Q57, 32Q28, 32L10, 37F50}
\newcommand{\dist}{\operatorname{dist}}
\newtheorem{thm}{Theorem}[section]
\newtheorem{prop}[thm]{Proposition}
\newtheorem{lemma}[thm]{Lemma}
\newtheorem{remark}[thm]{Remark}
\newcommand{\crd}{{({\bf Corrected})}}
\newcommand{\diag}{\operatorname{diag}}
\newtheorem{defn}[thm]{Definition}
\newtheorem{exmp}[thm]{Example}
	\newcommand{\ga}{\begin{gather}}
	\newcommand{\ega}{\end{gather}}
	\newcommand{\gan}{\begin{gather*}}
	\newcommand{\egan}{\end{gather*}}
	\newcommand{\al}{\begin{align}}
	\newcommand{\eal}{\end{align}}
	\newcommand{\aln}{\begin{align*}}
	\newcommand{\ealn}{\end{align*}}
	\newcommand{\eq}[1]{\begin{equation}\label{#1}}
	\newcommand{\eeq}{\end{equation}}
	\newcommand{\ci}{~\cite}
	\newcommand{\f}[2]{\frac{#1}{#2}}
\newcommand{\C}{{\mathbb C}}
\newcommand{\R}{{\mathbb R}}
\newcommand{\Z}{{\mathbb Z}}
\newcommand{\cyl}{{\widetilde C}}
	\newcommand{\nn}{{\bf N}}
	\newcommand{\ov}{\overline}
	\newcommand{\IM}{\operatorname{Im}}
	\newcommand{\cL}{\mathcal}
	\newcommand{\all}{\alpha}
	\newcommand{\del}{\delta}
	\newcommand{\Del}{\Delta}
	\newcommand{\var}{\varphi}
	\newcommand{\e}{\epsilon}
	\newcommand{\om}{\omega}
	\newcommand{\Om}{\Omega}
	\newcommand{\la}{\lambda}
	\newcommand{\pd}{\partial}
	\newcommand{\re}[1]{(\ref{#1})}
	\newcommand{\rea}[1]{$(\ref{#1})$}
	\newcommand{\rl}[1]{Lemma~\ref{#1}}
	\newcommand{\rp}[1]{Proposition~\ref{#1}}
	\newcommand{\rt}[1]{Theorem~\ref{#1}}
	\newcommand{\rd}[1]{Definition~\ref{#1}}
	\newcommand{\rla}[1]{Lemma~$\ref{#1}$}
	\newcounter{pp}
	\newcommand{\bpp}{\begin{list}{$\hspace{-1em}(\alph{pp})$}{\usecounter{pp}}}
		\newcommand{\epp}{\end{list}}
	\newcounter{ppp}
	\newcommand{\bppp}{\begin{list}{$\hspace{-1em}(\roman{ppp})$}{\usecounter{ppp}}}
		\newcommand{\eppp}{\end{list}}
	\def\beq{\begin{equation}}
	\def\eeq{\end{equation}}
\begin{document}
\begin{abstract}
	The goal of the article is to show that an  $n$-dimensional complex torus embedded in  a complex manifold of dimensional $n+d$, with a split tangent bundle, has neighborhood biholomorphic a neighborhood of the zero section in its normal bundle, provided the latter has
(locally constant) Hermitian transition functions and satisfies a {\it non-resonant Diophantine} condition.
\end{abstract}		
\maketitle	
\hspace{6cm}{\it In memory of Jean-Pierre Demailly}
\vspace{.5cm}
\setcounter{thm}{0}\setcounter{equation}{0}
	\section{Introduction}
In this paper we
show the following
\begin{thm}\label{thm1.1}
Let $C$ be an $n$-dimensional complex torus embedded in a complex manifold $M$ of dimensional $n+d$.  Assume that $T_CM$, the restriction of $TM$ on $C$,   splits as $TC\oplus N_C$. Suppose that the normal bundle of $C$ in $M$ admits transition functions that are
 Hermitian matrices and satisfy a {\it non-resonant Diophantine} condition $($see Definition~$\ref{dioph})$. Then a neighborhood of  $C$ in $M$ is biholomorphic to a neighborhood of   the zero section in the normal bundle.
\end{thm}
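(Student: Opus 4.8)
\emph{Reduction to a normal form for transition functions.} The plan is to realize the desired biholomorphism as the limit of a rapidly convergent (KAM/Newton) iteration, in the spirit of Arnold's linearization of a neighborhood of an elliptic curve. First I would use the splitting $T_CM=TC\oplus N_C$ to build an atlas of $M$ adapted to $C$: a cover $\{U_i\}$ of $C$ together with holomorphic charts $(z_i,w_i)\colon\mathcal U_i\to U_i\times\Delta^d_r$ of $M$ with $C\cap\mathcal U_i=\{w_i=0\}$, $z_i|_C$ a coordinate on $U_i$ whose coordinate vector fields span the given summand $TC$ along $C$, and $\partial/\partial w_i|_C$ spanning the given summand $N_C$. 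Since $C=\C^n/\Lambda$ carries a flat affine structure, an elementary computation then shows that on overlaps the transition maps take the form $(z_i,w_i)\mapsto(z_i+c_{ij}+O(|w_i|^2),\,A_{ij}w_i+O(|w_i|^2))$, where $z\mapsto z+c_{ij}$ are the transition maps of $C$ and $A_{ij}$ are the transition matrices of $N_C$, which are Hermitian by hypothesis. A neighborhood of $C$ in $M$ is biholomorphic to a neighborhood of the zero section of $N_C$ precisely when this cocycle of transition germs is holomorphically conjugate, near $\{w=0\}$, to its linear part $(z_i,w_i)\mapsto(z_i+c_{ij},A_{ij}w_i)$. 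Finally, since $\pi_1(C)\cong\Lambda$ is abelian the monodromy matrices of $N_C$ commute; being Hermitian and commuting, they are simultaneously diagonalizable by a constant unitary matrix, so after a constant gauge change $N_C\cong L_1\oplus\cdots\oplus L_d$ with each $L_j$ a flat line bundle whose monodromy along $\Lambda$ takes real values.

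\emph{Formal normalization and small divisors.} Next I would remove the $O(|w|^2)$ terms degree by degree. Conjugating the cocycle by a change of coordinates $(z_i,w_i)\mapsto(z_i+f_i,w_i+g_i)$ with $(f_i,g_i)$ homogeneous of degree $k\ge2$ in $w_i$, so as to kill the degree-$k$ part of the nonlinearity, amounts to solving a \v Cech coboundary equation $\delta(f_i,g_i)=(\text{degree-}k\text{ error cocycle})$ for a $0$-cochain of holomorphic sections of $S^kN_C^{\ast}\otimes T_CM$ over $C$. With the diagonalization above this bundle is a direct sum of flat line bundles $L_\chi$ with $\chi=\prod_j\mu_j^{-\alpha_j}\cdot(\mu_m\text{ or }1)$, $|\alpha|=k$, and the vector equation decouples into scalar ones. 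Each is solvable exactly when $\chi$ is nontrivial — this is the \emph{non-resonance} half of Definition~\ref{dioph}, which rules out a cohomological obstruction in $H^1(C,L_\chi)$ — and the norm of its solution operator is then controlled by a small divisor measuring how far $\chi$ is from the trivial character; the \emph{Diophantine} half of Definition~\ref{dioph} bounds these divisors below by $c(1+|\alpha|)^{-\nu}$, so each degree can be normalized at the price of a polynomial-in-$k$ loss.

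\emph{Rapidly convergent iteration.} The formal series obtained this way need not converge, so the core of the proof is a Newton scheme. After finitely many normalization steps the transition cocycle differs from its linear part by an error $E_0$ that is $O(|w|^N)$ with arbitrarily small norm after shrinking the polydisc radius. I would then iterate a step that, given the current error $E_\nu$ on a polydisc neighborhood $\mathcal N_\nu=\{|w|<\rho_\nu\}$ of the zero section, solves the \emph{linearized} conjugacy equation — again a family of \v Cech coboundary (equivalently $\overline\partial$) equations on $C$ valued in the flat line bundles above, solvable with the same small-divisor control — to produce a change of coordinates replacing $E_\nu$ by $E_{\nu+1}$ with $\|E_{\nu+1}\|_{\mathcal N_{\nu+1}}\lesssim\|E_\nu\|_{\mathcal N_\nu}^2$ on a slightly smaller $\mathcal N_{\nu+1}$, with $\rho_\nu\downarrow\rho_\infty>0$. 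The usual KAM bookkeeping then applies: the super-exponential decay $\|E_\nu\|\sim\varepsilon^{2^\nu}$ absorbs the polynomial small-divisor losses, while the polydisc radius contracts only geometrically to accommodate the Cauchy and composition estimates, so the infinite composition of these coordinate changes converges on $\mathcal N_\infty$ to a biholomorphism conjugating the transition cocycle of $M$ along $C$ to that of $N_C$ — which is the assertion of the theorem.

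\emph{Main difficulty.} The principal obstacle is the convergence analysis. One must fix weighted norms on the polydisc neighborhoods of the zero section for which (i) the solution operator of the cohomological equation obeys a tame estimate with exactly the loss dictated by the Diophantine exponent, uniformly over the infinitely many bundles $S^kN_C^{\ast}\otimes T_CM$, $k\ge2$, that occur, and (ii) composition of maps and the quadratic remainder of Newton's step are controlled so that the error is genuinely squared while $\rho_\nu$ contracts only geometrically. Because $C$ is compact and non-Stein, the cohomology of the flat line bundles on the torus is genuinely in play — it is the channel through which non-resonance and the Diophantine condition enter — and reconciling the small-divisor loss with the quadratic gain is the delicate point; the remaining steps are, in principle, standard bookkeeping.
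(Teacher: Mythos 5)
Your proposal is correct in outline and rests on the same two pillars as the paper --- small divisors produced by the flat line bundles sitting inside $S^QN_C^{*}\otimes T_CM$, and a Newton scheme whose quadratic gain absorbs the polynomial small-divisor loss --- but it takes a genuinely different technical route. You stay on the compact torus and propose to solve \v Cech coboundary equations for cocycles of transition germs, valued in flat line bundles on $C$; the paper instead passes to the intermediate covering $\cyl=\C^n/\Z^n$, which is Stein. There the pulled-back normal bundle is holomorphically trivial (Proposition~\ref{pi_SEtrivial}) and Siu's theorem linearizes an entire neighborhood of $\cyl$ at once, so the only remaining data is the $n$-tuple of commuting deck transformations $\tau_1,\dots,\tau_n$, perturbations of the linear maps $\hat\tau_j(h,v)=(T_jh,M_jv)$. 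The cohomological equation then becomes the explicit difference equation $\cL L_i(G)=F_i$ of Proposition~\ref{cohomo}, solved coefficientwise in a Taylor--Laurent expansion, with the compatibility of the right-hand sides supplied by the pairwise commutation of the $\tau_i$ (Lemma~\ref{computeFij}) rather than by a \v Cech cocycle identity. The paper's route converts all the torus cohomology into elementary Fourier analysis with sup-norm estimates on Reinhardt domains; your route avoids covering spaces but makes the quantitative vanishing of $H^1(C,L_\chi)$ in \v Cech sup norms the central analytic step, and in practice that step would be proved by the same Fourier expansion. Two glosses in your sketch need correcting when you flesh this out: solvability of each scalar equation is \emph{not} ``exactly when $\chi$ is nontrivial'' --- on a compact torus a nontrivial flat character can still be resonant in some Fourier mode $P\in\Z^n$, which is exactly why Definition~\ref{def-nr} quantifies over all $P$ --- and the small-divisor lower bound must be polynomial in $|P|+|Q|$ jointly, as in Definition~\ref{dioph}, not merely in the degree $|\alpha|$ in the normal variables; the loss in $P$ is paid for by shrinking the domain in the torus directions (the $\e\mapsto\e-\del/\kappa$ bookkeeping of Lemma~\ref{cauchy}), and your scheme must build in that contraction as well.
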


We first describe the organization of the proof of our main theorem.

A complex torus $C$ can be identified with the quotient of $\C^n$ by a lattice $\Lambda$ spanned by the standard unit vectors $e_1,\dots, e_n$ in $\C^n$ and $n$ additional vectors $e'_1,\dots, e'_n$ in $\C^n$, where $\IM e'_1,\dots, \IM e'_n$ are linearly independent vectors in $\R^n$. Let $\Lambda'$ be the lattice in the cylinder $\cyl:=\R^n/\Z^n+i\R^n$ spanned by $  e_1',\dots,   e'_n\mod \Z^n$.   There are two coverings for the  torus $C=\C^n/\Lambda=\cyl/{\Lambda'}$: the universal covering $\pi\colon\C^n\to C$ and the covering by cylinder, $\pi_\cyl\colon\cyl\to C$  that extends to a covering $\cL M$ over $M$.  In section two we recall some facts about factors of automorphy  for   vector bundles on $C$ via the covering by $\C^n$. In section three, we study the flat vector bundles on $C$.    The pull back of the flat vector bundle $N_C$  to  the cylinder $\cyl$ is the normal bundle $N_{\cyl}$ of $\cyl$ in $\cL M$. We show that $N_{\cyl}$ is always the holomorphically trivial vector bundle $\cyl\times\C^d$. By ``vertical coordinates", we mean ``coordinates on $\C^d$", the normal component of the normal bundle $N_C$, while ``horizontal coordinates" mean the tangential components of  $N_C$.

Since $\cyl$ is a Stein manifold, a theorem of   Siu \cite{siu-stein} says that  a neighborhood of $\cyl$ in $\cL M$ is biholomorphic to a neighborhood of the zero section in its normal bundle, which is trivial as mentioned above.
We show that the holomorphic classification of neighborhoods $M$ of $C$ with flat $N_C$ is equivalent to 
the holomorphic classification of the family of the deck transformations of coverings $\cL M$ of $M$ in a neighborhood of $\cL C$. These deck transformations are
``higher-order" (in the vertical coordinates) perturbations $\tau_1,\dots, \tau_n$ of $\hat\tau_1,\dots, \hat\tau_n$, where the latter are the deck transformations of the covering of $\widetilde N_C$ over $N_C$. In order to find a biholomorphism between a neighborhood of $C$ in $M$ and a neighborhood of its
  zero 
 section in $N_C$, it is sufficient to find a biholomorphism that conjugates $\{\tau_1,\dots, \tau_n\}$ to $\{\hat\tau_1,\dots, \hat\tau_n\}$.

There are two useful features. First, since the fundamental group of $C$ is abelian, the deck transformations $\tau_1,\dots,\tau_n$ commute pairwise. Second, we can also introduce suitable coordinates on $\cyl$ so that the "horizontal" components of deck transformations have diagonal linear parts. In such a way the classification of neighborhoods of $C$ is reduced to a more attainable  classification of deck transformations. While the full theory for this classification is out the scope of this paper,  we study the case when $N_C$ admits  Hermitian transition functions. Since a Hermitian transition matrix must be locally constant, we call such an $N_C$ {\it Hermitian flat}.  The convergence proof for \rt{thm1.1} is given in section four. 
 It relies on a Newton rapid convergence scheme adapted to our situation  based on an appropriate Diophantine condition among the lattice and the normal bundle.
 At step $k$ of the iteration scheme, let  $\del_k$ be the error of the deck transformations  $\{\tau_1^{(k)},\dots, \tau_n^{(k)}$\} defined on domain $D^{(k)}$ to $\hat\tau_1,\dots, \hat\tau_n$ in suitable norms. By an appropriate transformation $\Phi^{(k)}$, we conjugate to a new set of deck transformations $\{\tau_1^{(k+1)},\dots, \tau_n^{(k+1)}\}$ of which the error to the linear ones is now $\del_{k+1}$ on a slightly smaller domain $D^{(k+1)}$. Using our Diophantine conditions, related to the lattice $\Lambda$ and the normal bundle, we show that the sequence $\Phi^{(k)}\circ \cdots \circ \Phi^{(1)}$ converges to a holomorphic transformation $\Phi$ on an open domain $D^{(\infty)}$ where we linearize $\{\tau_1,\dots,\tau_n\}$.

We now describe
closely related previous results.
Our work is motivated by work of Arnol'd and Ilyashnko-Pyartli. Our main theorem was proved in \cite{arnold-embed} when $C$ is an elliptic curve ($n=1$) and $N_C$ has rank one ($d=1$). Il'yashenko-Pyartli~\cite{ilyashenko-pyartly-embed} extended Arnol'd's result to the case when the torus is the product of elliptic curves  together with a normal bundle which is a direct sum of line bundles, while \rt{thm1.1} deals with general complex tori. We also assume that $N_C$ is {\it non-resonant}, a condition that is weaker
 than the non-resonant condition used 
by Il'yashenko-Pyartli. Our small-divisor condition is also weaker. Of course, the study of neighborhood of embedded compact complex manifolds has a long history. Also, see some recent work \cite{hwang-annals,koike-fourier,loray-moscou}.  We refer to \cite{MR4392029} for some references and a different approach to this range of questions.

{\bf Acknowledgments.} This work benefits from helpful discussions with Jean-Pierre Demailly. Part of work was finished when X.~G. was supported by CNRS and  UCA for a visiting position at UCA.

\setcounter{thm}{0}\setcounter{equation}{0}
	\section{Vector bundles on Tori and factors of automorphy}
In this section, we identify vector bundles on a complex torus with factors of automorphy. The latter gives us a useful alternative definition of vector bundles on a higher dimensional complex torus $C$ and the isomorphisms of two vector bundles. 	General references for line bundle on complex tori are \cite{birkenhake-lange,debarre-book,iena-05} and \cite[p.~307]{GH-book}.
	
   Let $\Lambda$ be a $2n$-dimensional lattice in $\mathbb{C}^n$.
  We may assume that  $\Lambda$ is defined by $2n$  vectors $e_1,\dots, e_n, e'_1,\dots,e'_n$  of $\mathbb{C}^n$, where $e_i=(0,\ldots, 0,1,0,\ldots, 0)$ with $1$ being at the $i$-th place, $e'_i=(e'_{i,1},\ldots,e'_{i,n})$ and the matrix
  $$\IM \tau:=(\IM{e'_{i,j}})_{1\leq i,j\leq n}=:({e''_{i,j}})_{1\leq i,j\leq n}$$ is invertible \cite[exerc. 2, p.~21]{birkenhake-lange}.
  The compact complex manifold $C:=\mathbb{C}^n/\Lambda$ is called an ($n$-dimensional) complex torus. Unless the lattice is equivalent to another one defined by a diagonal matrix $e'$, $C$ is not biholomorphic to a product of one-dimensional tori.
  Let $\pi: \C^n\rightarrow C$ be the universal cover of $C$. Its group $\Gamma$ of deck transformations consists of translations
  $$
 T_\la\colon z\to z+\la, \quad \la\in\Lambda.
  $$
  Note that $\Gamma$ is abelian and is isomorphic to $\Z^{2n}$. $\Gamma$ is also isomorphic to $\pi_1(C,0)$ since $\mathbb C^n$ is a universal covering of $C$.

 Next, we consider equivalence relations for holomorphic vectors bundles on $C$ and $\mathbb C^n$, following the realization proof of Theorem 3.2 in \cite{iena-05}.
 Let $E$ be a vector bundle of rank $d$ over $C$. The pull-back bundle $\pi^*E$ on $\mathbb C^n$ is  trivial and  has global coordinates $\hat\xi$. Let $\{U_j\}$ be an open covering of $C$ so that coordinates $\xi_j=(\xi_{j,1},\dots, \xi_{j,d})^t$ of $E$ are well-defined (injective) on $U_j$. Then we have
 \eq{hxixi}
 \hat\xi=h_j\xi_j(\pi),
 \eeq
  where $h_j$ is a non-singular holomorphic matrix on $\pi^{-1}(U_j)$. The transition functions $g_{kj}$ satisfy
 \eq{gkjh}
 g_{kj}(\pi)=h_k^{-1}h_j, \quad \text{on $\pi^{-1}(U_k)\cap\pi^{-1}(U_j)$}.
 \eeq
 For any $z,\la$, we know that both $\pi(z+\la)$ and  $\pi(z)$ are in the same $U_j$  for some $j$.  Then we have
$$
\hat\xi(z+\la)=h_j(z+\la)\xi_j(\pi(z))=h_j(z+\la)h_j(z)^{-1}\hat\xi(z).
$$
We can define
\eq{rholaz}
\rho(\la,z)=h_j(z+\la)h_j(z)^{-1}, \quad z\in\pi^{-1}(U_j)
\eeq
as the latter is independent of the choice of $j$
by 
\re{gkjh}.
%
Therefore,  
 \ga\label{global-coord}
 \hat\xi(\la+z)=\rho(\la,z)\hat\xi(z), \quad \rho(\la,z)\in GL(d,\mathbb C),\\
 \rho\colon\Lambda\times \mathbb C^n\to GL(d,\mathbb C).
 \end{gather}
Here $\rho$ is called a  {\it factor of automorphy}.
We can verify that
\eq{abelP}
\rho(\la+\mu,z)=\rho(\la,\mu+z)\rho(\mu,z).
\eeq
In particular, if all $\rho(\la,z)=\rho(\la)$ are independent of $z$, then 
 $\rho(\Lambda)$ is an abelian group.

The above construction from a vector bundle $E$ on $C$ to a factor of automorphy can be reversed. Namely, given
\re{global-coord}-\re{abelP}, 
define the vector bundle $E$
on $C$
as the quotient vector space of $\C^n\times\C^d$ via the equivalence relation
\eq{eqrel}
(z,\xi)\sim (z+\la,\rho(\la,z)\xi), \quad z\in\C^n,\ \xi\in\C^d,\ \la\in\Lambda.
\eeq
We denote the projection from the cylinder $\cyl:=\R^n/\Z^n+i\R^n=\C^n/\Z^n$ onto $C$ by $\pi_{\cyl}$.
Therefore, we can define $\pi_\cyl^*E$ on the cylinder $\cyl$ by the equivalence relation
\eq{eqrelS}
(z,\xi)\sim (z+\la,\rho(\la,z)\xi), \quad z\in\C^n,\ \xi\in\C^d,\ \la\in\Z^n.
\eeq

Of course,  global coordinates $\hat\xi$, $\rho$, and 
$g_{jk}$ are not uniquely determined by $E$. However, their equivalence classes are determined.  Two vector bundles $E,\tilde E$ are isomorphic if their corresponding transitions $g_{jk},\tilde g_{jk}$ satisfy $\tilde g_{jk}=h_j^{-1}g_{jk}h_k$ where $h_j$ are non-singular holomorphic matrices.   Replacing global coordinates $\hat\xi$ by $\nu\hat\xi$ where $\nu\colon\mathbb C^n\to GL(n,d)$ is holomorphic, we can verify that
\eq{nuLaz}
\nu(\la+z)\rho(\la, z)\nu(z)^{-1}=:\tilde\rho(\la,z)
\eeq
is also a factor of automorphy.   Define two factors of automorphy  $\rho,\tilde\rho$ to be {\it equivalent} if \re{nuLaz} holds. Therefore,  the classification of holomorphic vector bundles is identified with the classification of factors of automorphy.

\setcounter{thm}{0}\setcounter{equation}{0}
\section{Flat vector bundles}
In this section, we will show that the pull-back of a flat vector bundle $E$ on $C$ to the cylinder  $\cyl=\C^n/\Z^n$ is always trivial.

 When  $E$ is flat, we can choose global coordinates as follows. We know that  $\pi^*E$ is also flat and   we can choose its global flat basis, or global flat coordinates  $\hat\xi$ by using analytic continuation on $\C^n$ and pulling back flat local coordinates of $E$. In other words,  in \re{hxixi} $h_j$ are locally constants while $\xi_j$ are locally flat coordinates.  Then $\rho(\la,z)$ depend only on $\la$, in which case we write $\rho(\la)$ for $\rho(\la,z)$. As remarked above, $\rho(\Lambda)$ is abelian.
When $E$ is unitary (flat), by the same reasoning  $\pi^*E$ is unitary and we can choose
$h_j$ and $\rho(\la)$ to be unitary.

  A $d\times d$ Jordan block $J_d(\la)$  is a matrix of the form $\lambda I_d+N_d$, where $I_d$ is the $d\times d$ identity matrix and $N_d$ is the $d\times d$ matrix with all entries being $0$, except all
 the $(i,i+1)$-th entries being $1$. A  matrix $T$  commutes with $J$ if and only if
 $$
T=T_d(a):=a_0I_d+\sum_{i>0} a_i N_d^i.
 $$
 Note that $N_d^i=0$ for $i\geq d$. Following \cite[p.~218]{gantmacher1},
 we call the above $T$ as well as the  following two types of matrices, {\it regular upper triangular} matrices  w.r.t. $J$:
   $$
 A=(0,T_d(a)), \quad\text{or} \quad B=\binom{T_{d'}(a')}{0},
 $$
  where $0$ denotes in $A$ (resp. $B$) a $0$ matrix of $d$ rows (resp. $d'$ columns).
 Given a Jordan matrix
 $$
 \tilde J=\diag(J_{d_1}(\la_1),\dots, J_{d_k}(\la_k)). $$
 the matrices that commute with $\tilde J$ are precisely the block matrices
 $$
X= (X_{\all\beta})_{m\times m}
 $$
 where $X_{\all\beta}=0$ if $\la_\all\neq\la_\beta$, while $X_{\all\beta}$ is a regular upper triangular $(d_\all\times d_\beta)$ matrix if $\la_\all=\la_\beta$.
 Such a matrix $X$ is said to be a {\it regular upper triangular matrix }
 w.r.t. $J$

From the structure of matrices commuting with a Jordan matrix, we can verify the following two results.
\begin{prop}Let $A_1,\dots, A_m$ be $2\times 2$ matrices commuting pairwise. Then there is a non-singular matrix $S$ such that all $S^{-1}A_jS$
are Jordan matrices.
\end{prop}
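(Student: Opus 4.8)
The plan is to exploit that pairwise commutation forces $A_1,\dots,A_m$ to lie in a two-dimensional commutative subalgebra of $M_2(\C)$, which reduces everything to conjugating a single matrix into Jordan form. If every $A_j$ is a scalar matrix, it is already diagonal (hence a Jordan matrix) and $S=I_2$ works. Otherwise fix a non-scalar $A_{j_0}=:A$; a $2\times 2$ matrix that is not a scalar is non-derogatory (its minimal polynomial has degree $2$), so its centralizer in $M_2(\C)$ is exactly $\C I_2+\C A$. Since every $A_j$ commutes with $A$, we obtain $A_j=\alpha_jI_2+\beta_jA$ for scalars $\alpha_j,\beta_j$, and it is enough to find $S$ with $S^{-1}AS$ in Jordan form, because then $S^{-1}A_jS=\alpha_jI_2+\beta_j(S^{-1}AS)$ for every $j$.

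As $A$ is $2\times 2$ and non-scalar, its minimal and characteristic polynomials coincide, leaving two cases. If $A$ has two distinct eigenvalues $\lambda_1\neq\lambda_2$, take $S$ with $S^{-1}AS=\diag(\lambda_1,\lambda_2)$; then each $S^{-1}A_jS=\diag(\alpha_j+\beta_j\lambda_1,\alpha_j+\beta_j\lambda_2)$ is diagonal, hence a Jordan matrix, and this $S$ works. If $A$ has a single eigenvalue $\lambda$, take $S_0$ with $S_0^{-1}AS_0=J_2(\lambda)=\lambda I_2+N_2$; then each $S_0^{-1}A_jS_0=(\alpha_j+\beta_j\lambda)I_2+\beta_jN_2$ is regular upper triangular with respect to $J_2(\lambda)$ in the sense recalled above, that is, upper triangular with constant diagonal. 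Conjugating once more by $\diag(t,1)$ preserves this shape and replaces the superdiagonal entry $\beta_j$ of each by $t^{-1}\beta_j$; choosing $t$ so that the nonzero superdiagonal entries all become $1$ turns $S_0^{-1}A_jS_0$ into $(\alpha_j+\beta_j\lambda)I_2$ when $\beta_j=0$ and into the Jordan block $J_2(\alpha_j+\beta_j\lambda)$ otherwise. Composing the two conjugations produces the desired $S$.

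The step I expect to be the real obstacle is this last normalization in the one-eigenvalue case: a single diagonal conjugation $\diag(t,1)$ can turn every $S_0^{-1}A_jS_0$ into a Jordan block only if the nonzero superdiagonal entries $\beta_j$ already agree once $A$ has been put in the form $J_2(\lambda)$, and one must check that the hypotheses force this. It is automatic as soon as each $A_j$ is diagonalizable — for then only the scalar and distinct-eigenvalue cases occur — and in particular when the $A_j$ are commuting Hermitian matrices, which are moreover simultaneously diagonalizable by one unitary $S$; the latter is the case relevant to Hermitian flat normal bundles.
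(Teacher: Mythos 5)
Your reduction to the centralizer of a single non-scalar $A$ is correct and considerably more explicit than the paper, which offers no argument for this proposition beyond the remark that it follows from the structure of matrices commuting with a Jordan matrix. A non-scalar $2\times 2$ matrix is non-derogatory, so its centralizer is $\C I_2+\C A$, and your scalar and distinct-eigenvalue cases go through exactly as written.

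The obstacle you flag in the single-eigenvalue case, however, is not merely a step you left unchecked: it is fatal to the statement as literally written. The hypotheses do not force the nonzero superdiagonal entries $\beta_j$ to agree. Take $A_1=N_2$ and $A_2=2N_2$; they commute, and the only $2\times2$ Jordan matrix conjugate to either one is $N_2$ itself, so a simultaneous normalization would require $S^{-1}N_2S=N_2$ and $S^{-1}(2N_2)S=2\,S^{-1}N_2S=2N_2\neq N_2$ at once, which is impossible. Thus, with the paper's convention that a Jordan block has superdiagonal entries equal to $1$, the proposition is false, and no choice of $S$ --- diagonal or otherwise --- can repair your last step. What your argument does establish is the correct weaker statement that all $A_j$ are simultaneously conjugate to matrices of the form $\gamma_jI_2+\beta_j'N_2$ or simultaneously diagonalizable, i.e.\ simultaneously upper triangular with one chosen non-scalar matrix in Jordan form; that is precisely the $2\times2$ instance of \rl{up-t}, and it is all the rest of the paper needs. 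Your closing observation is also on target: in the Hermitian flat case actually used later, the matrices are simultaneously diagonalizable by a unitary $S$, so the statement holds there for trivial reasons.
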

\begin{exmp} The $3\times 3$ matrices $\la I_3+N_3,
\mu I_3+N_3^2$ commute,  but they cannot be transformed into the Jordan normal forms simultaneously.
\end{exmp}
The following results on logarithms are likely classical. However, we cannot find a reference. Therefore, we give  proofs emphasizing  commutativity of logarithms  of matrices.

We start with the following.
\begin{lemma}\label{up-t}
Let $A_1,\dots, A_m$ be pairwise commuting matrices. Then there is a non-singular matrix $S$ such that
$$
S^{-1}A_jS=(\hat A_{\all\beta})_{1\leq\all,\beta\leq s}=:\hat A_j, \quad 1\leq j\leq m
$$
where  $\hat A_1$ is a Jordan matrix and all $\hat A_j$ are
upper triangular matrices.
\end{lemma}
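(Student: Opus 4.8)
The plan is to reduce everything to the generalized eigenspace decomposition of $A_1$ together with the classical Jordan normal form. Write $\lambda_1,\dots,\lambda_s$ for the distinct eigenvalues of $A_1$ and let $\C^N=W_1\oplus\dots\oplus W_s$, $W_i=\ker(A_1-\lambda_iI)^N$, be the corresponding decomposition of the ambient space. First I would observe that the spectral projector onto each $W_i$ is a polynomial in $A_1$ (by Lagrange interpolation), hence commutes with all of $A_2,\dots,A_m$; consequently every $W_i$ is invariant under the whole family $A_1,\dots,A_m$. This is the only point at which commutativity enters.

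In a basis adapted to $W_1\oplus\dots\oplus W_s$ each $A_j$ is therefore block diagonal, $A_j=\diag(A_j^{(1)},\dots,A_j^{(s)})$, and on the $i$-th block $A_1^{(i)}=\lambda_iI+N_i$ with $N_i$ nilpotent. On each $W_i$ I would then choose a basis putting $A_1^{(i)}$ into Jordan form $\diag(J_{d_{i,1}}(\lambda_i),J_{d_{i,2}}(\lambda_i),\dots)$. Concatenating these bases gives a non-singular $S$ for which $S^{-1}A_1S$ is a Jordan matrix, while $S^{-1}A_jS$ remains block diagonal --- in particular block upper triangular --- for the $s$-block partition determined by the $W_i$'s; here ``upper triangular'' is read with respect to that induced block structure. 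Moreover, for $j\ge2$ the $i$-th diagonal block of $S^{-1}A_jS$ commutes with the single-eigenvalue Jordan matrix $S^{-1}A_1^{(i)}S$, so by the description of the commutant of a Jordan matrix recalled just before the lemma it is a regular upper triangular matrix with respect to that Jordan matrix, and every block of $S^{-1}A_jS$ joining two Jordan blocks of $S^{-1}A_1S$ with distinct eigenvalues vanishes.

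The step I expect to be the real subtlety is not a computation but pinning down the correct conclusion. One might hope to conjugate the whole family to \emph{entrywise} upper triangular matrices while keeping $A_1$ a Jordan matrix, but this is impossible in general: matrices commuting with a Jordan matrix are only regular upper triangular, not upper triangular, and, as the pair $\lambda I_3+N_3,\ \mu I_3+N_3^2$ preceding the lemma already shows, a commuting family need not even admit simultaneous Jordan forms. Thus the block structure above is genuinely needed, and the bulk of the argument consists in checking that the decomposition and the Jordan normalizations carried out for $A_1$ are automatically compatible with the other $A_j$ --- which they are, precisely because the $W_i$ and the Jordan bases are manufactured from $A_1$ alone, with which every $A_j$ commutes.
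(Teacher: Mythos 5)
There is a genuine gap, and it begins with your reading of the conclusion. The lemma asserts --- and the paper later needs, e.g.\ in Proposition~\ref{defLnA}, where $\ln\hat A_{j,d_i}$ is defined through \re{defLn0}--\re{defLn0+} for \emph{entrywise} upper triangular matrices --- that all the $\hat A_j$ are genuinely upper triangular, with the extra feature that $\hat A_1$ is at the same time a Jordan matrix. Your assertion that this is ``impossible in general'' is incorrect, and the evidence you cite does not support it: the pair $\la I_3+N_3,\ \mu I_3+N_3^2$ shows that a commuting family need not admit simultaneous Jordan forms, but both of those matrices \emph{are} upper triangular, which is all the lemma claims for $A_2,\dots,A_m$. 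Likewise, the fact that the commutant of a Jordan matrix $J$ contains non-upper-triangular matrices does not preclude choosing, for the given \emph{finite} family, a particular Jordan basis of $A_1$ (equivalently, a further conjugation by a matrix regular w.r.t.\ $J$) in which $A_2,\dots,A_m$ all become upper triangular; producing that choice is precisely the content of the lemma.

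Concretely, your construction stops too early. The generalized-eigenspace decomposition of $A_1$, together with the invariance of each $W_i$ under the whole family, only reduces the problem to a single block on which $A_1$ has one eigenvalue $\la_i$ but possibly several Jordan blocks. Within such a $W_i$ the matrices $A_j^{(i)}$ are merely elements of the commutant of $\diag(J_{d_{i,1}}(\la_i),J_{d_{i,2}}(\la_i),\dots)$, and that commutant contains matrices such as $\left(\begin{smallmatrix}0&0\\ I_2&0\end{smallmatrix}\right)$ (in block form, for two equal Jordan blocks), which are not triangular in any sense; so even the weakened ``block upper triangular'' conclusion you settle for is not established inside a single $W_i$. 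The missing idea --- and the actual work in the paper's proof --- is an induction on the dimension using a common eigenvector of the \emph{entire} commuting family (not just of $A_1$), which can be taken among the eigenvectors $e_{d'_1},\dots,e_{d'_s}$ of $J$; one strips off that vector, applies the inductive hypothesis with a conjugating matrix that is regular w.r.t.\ the reduced Jordan matrix, and checks that this keeps $A_1$ in Jordan form up to a harmless dilation of the first coordinate.
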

 \begin{proof} We may assume that  $A_1$ is a Jordan matrix $J=\diag(J_{d_1}(\la_1),\dots, J_{d_s}(\la_s))$. Then
 $$
 A_j=(X^j_{\all\beta})_{1\leq\all,\beta\leq s}
 $$
 are regular w.r.t $J$. Note that pairwise commuting non-singular matrices have  non-trivial common eigenspaces.  The eigenspace of $J$ are spanned $e_{d_1'}, \dots, e_{d'_s}$ with $d'_1=1$ and $d_j'=d_1+\cdots d_{j-1}+1$.
 To simplify the indices, we may assume that $e_1$ is an eigenvector of all $A_j$. Then the first column of $A_j$ is $a_je_1$ with $a_j\neq0$.  The new matrices $\tilde A_j$,  obtained by removing all first rows and first columns,
  still commute pairwise.  In particular all $\tilde A_j$ are regular to the new Jordan matrix $\tilde J=\tilde A_1$. By induction on $d$,   we can find  a non-singular matrix  $\tilde S$ which is regular to $\tilde J$ so that all $\tilde S^{-1}\tilde A_j\tilde S$ are
upper-triangular.  Let $S=\diag(1,\tilde S)$. Now  $S^{-1}=\diag(1,\tilde S^{-1})$. We can check that
all $\hat A_j:=S^{-1}A_jS$ are upper triangular. Then $ \hat A_1$, $J$ have the same entries, with only one possible exception
$$\hat A_{1;12}=s_{11}J_{12}.
$$
If $\hat A_{1;12}\neq 0$, dilating the first coordinate can transform $\hat A_{1}$ into the original $J$, while  $\hat A_j$ remain upper-triangular. If $s_{12}J_{12}=0$, then $J_{12}$ must be $0$, i.e. $\hat A_1=J$,  because one cannot transform a Jordan matrix, $A_1=J$, into a new Jordan matrix,
$\hat A_1$, by reducing  an entry $1$ to $0$ and keeping other entries unchanged.
 \end{proof}
 The above simultaneous normalization of  upper-triangular matrices allows us to define the logarithms. The construction of logarithms of non-singular matrices can be found in \cite[p.~239]{gantmacher1}. Here we need to find a definition that is suitable to determine the commutativity of the logarithm of pairwise commuting non-singular matrices.

Recall that for a  $d\times d$ matrix $A$, the generalized eigenspace $E_\la (A)$ with eigenvalue $\la$ is the kernel of $(A-\la I)^d$, while $\C^d$
is the direct sum of all $E_\la (A)$. A
matrix $B$
that commutes with $A$ leaves each $E_\la (A)$ invariant, i.e. $B(E_{\la}(A))\subset E_{\la}(A)$. Thus if $A_1,\dots, A_m$ commute pairwise, we can decompose $\C^d$ as a direct sum of liner subspaces $V_j$ such that each $V_j$ is invariant by $A_i$ and admits exactly one eigenvalue of $A_i$. Thus to define $\ln A_j$, we will assume that each $A_j$ has a single eigenvalue on $\C^d$ if we wish.

 Given a non-singular matrix $A$, a logarithm of $A$ is a matrix $\ln A$  satisfying
 $$
 e^{\ln A}=A
 $$
 where the exponential matrix $e^B=\sum\frac{B^n}{n!}$ is always well-defined.  However,  $\ln A$
is not  unique.


  For a non-singular upper triangular matrix
 \begin{equation}\label{defLn0}
 A=\la I_d+ a, \quad a:=(a_{ij})_{1\leq i,j\leq d}, a_{ij=0}\quad  \forall i\geq j,
 \end{equation}
 we have $a^k=0$ for $k\geq d$.
  Using the identity $e^{B+C}=e^{B}e^C$ for two commuting matrices $B,C$, we  see that $e^{\ln A}=A$ for
 \begin{equation}\label{defLn0+}
 \ln A:=(\ln \la)I_d-\sum_{k>0}\f{(-\la^{-1} a)^k}{k}
 \end{equation}
with $0\leq \IM  \ln(.) <2\pi$.
For a non-singular Jordan matrix, we can define
$$
\ln\diag(J_{d_1}(\la_1),\dots, J_{d_m}(\la_m))=\diag(\ln J_{d_1}(\la_1),\dots, \ln J_{d_m}(\la_m)).$$
 Note that $\ln\la_\all=\ln\la_\beta$ if and only if $\la_\all=\la_\beta$.
 Since matrices that commute with a fixed matrix is closed under multiplication by a scalar, addition and multiplication.
 It is thus clear that if $A$ is an upper triangular matrix that is regular to a non-singular Jordan matrix $J=\diag(J_{d_1}(\la_1),\dots, J_{d_s}(\la_s))$, then $\ln A$ remains regular to $J$. Equivalently and more importantly,  $\ln A$ is regular to the Jordan normal form of $\ln J$, which is
 $$
 \diag(J_{d_1}(\ln\la_1),\dots, J_{d_s}(\ln\la_s)).
 $$

 \begin{prop}\label{defLnA}
 Let $A_1,\dots, A_m$ be pairwise commuting 
 $d\times d$ matrices. Then there is a non-singular matrix $S$ such that
 $\hat A_j:=S^{-1}A_jS$ are block diagonal matrices
  of the form \begin{equation}\label{hataj}
 \hat A_j=\diag(\hat A_{j,d_1},\dots, \hat A_{j,d_k})
 \end{equation}
  where all $\hat A_{j,d_i}$
  are   upper triangular $d_i\times d_i$ matrices, and each $\hat A_{j,d_i}$ has only one eigenvalue $\la_{j,i}$. Assume further that all $A_j$ are non-singular. Then
  \begin{equation}\label{defLn1}
  \ln A_j:=S\diag(\ln \hat A_{j,d_1}, \dots, \ln \hat A_{j,d_k})S^{-1}, \quad 1\leq j\leq m
  \end{equation}
  commute pairwise and $e^{\ln A_j}=A_j$, where $\ln\hat A_{j,d_i}$ are defined by  \rea{defLn0}-\rea{defLn0+}.
 \end{prop}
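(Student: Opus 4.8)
The plan is to reduce everything to the single-block computation already carried out above, in three steps.

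\emph{Step 1: the common block decomposition.} Since $A_1,\dots,A_m$ commute pairwise, each $A_j$ preserves every generalized eigenspace $E_\mu(A_i)$ of every $A_i$. Hence $\C^d$ is the direct sum of the nonzero subspaces $V:=\bigcap_{j=1}^m E_{\mu_j}(A_j)$, taken over tuples $(\mu_1,\dots,\mu_m)$; relabel these as $V_1,\dots,V_k$ and set $d_i=\dim V_i$. Each $V_i$ is invariant under all $A_j$, and $A_j|_{V_i}$ has a single eigenvalue $\la_{j,i}$. On $V_i$ the commuting matrices $A_1|_{V_i},\dots,A_m|_{V_i}$ can be simultaneously made upper triangular by \rl{up-t} (only the upper-triangular conclusion is needed, and single-eigenvalue-ness on $V_i$ is automatic). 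Gluing bases of the $V_i$ produces $S$, and $\hat A_j=S^{-1}A_jS$ then has the block-diagonal shape \re{hataj}, each $\hat A_{j,d_i}$ being upper triangular with the one eigenvalue $\la_{j,i}$.

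\emph{Step 2: $e^{\ln A_j}=A_j$.} Now assume the $A_j$ non-singular, equivalently all $\la_{j,i}\ne 0$, so \re{defLn0}--\re{defLn0+} apply to every block $\hat A_{j,d_i}$. On each such block the identity $e^{\ln \hat A_{j,d_i}}=\hat A_{j,d_i}$ is exactly what was checked above for a single non-singular upper triangular matrix: write $\ln\hat A_{j,d_i}=(\ln\la_{j,i})I+L$ with $L$ nilpotent and commuting with $(\ln\la_{j,i})I$, then use $e^{B+C}=e^Be^C$. Summing over the blocks and conjugating by $S$ gives $e^{\ln A_j}=A_j$ for $\ln A_j$ as defined in \re{defLn1}.

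\emph{Step 3: commutativity of the logarithms.} Conjugation by $S$ preserves commutators, and $\hat A_i,\hat A_j,\ln\hat A_i,\ln\hat A_j$ are all block diagonal for the common partition $(d_1,\dots,d_k)$, so it suffices to show $\ln\hat A_{i,d_\ell}$ commutes with $\ln\hat A_{j,d_\ell}$ for each $\ell$. The key point is that for a non-singular upper triangular matrix $A=\la I+a$ with $a$ strictly upper triangular (hence nilpotent) one has $a=A-\la I$ and the sum in \re{defLn0+} is finite, so $\ln A=(\ln\la)I-\sum_{k\ge 1}\tfrac{(-1)^k\la^{-k}}{k}(A-\la I)^k$ is a polynomial in $A$. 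Thus $\ln\hat A_{i,d_\ell}$ is a polynomial in $\hat A_{i,d_\ell}$ and $\ln\hat A_{j,d_\ell}$ is a polynomial in $\hat A_{j,d_\ell}$; since $\hat A_{i,d_\ell}$ and $\hat A_{j,d_\ell}$ commute (they are the $\ell$-th diagonal blocks of the commuting matrices $\hat A_i=S^{-1}A_iS$ and $\hat A_j=S^{-1}A_jS$), these polynomials commute, and therefore so do $\ln A_i$ and $\ln A_j$.

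I expect the only real subtlety to be Step 1 — setting up the simultaneous common-eigenspace decomposition correctly and checking that \rl{up-t} applies on each piece to deliver the block form \re{hataj}. The rest is bookkeeping once one notices that \re{defLn0+} is a truncated power series in the nilpotent part $A-\la I$, hence a polynomial in $A$, which turns commutativity of the logarithms into the triviality that polynomials in commuting matrices commute.
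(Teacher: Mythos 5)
Your proposal is correct and follows essentially the same route as the paper: the simultaneous generalized-eigenspace decomposition plus Lemma \ref{up-t} for the block form, and the observation that each $\ln\hat A_{j,d_i}$ is a finite sum in powers of the nilpotent part $\hat A_{j,d_i}-\la_{j,i}I$ (the paper phrases this as the nilpotent parts $W_{j,i}$ inheriting commutativity from the $\hat A_{j,i}$, you phrase it as the logarithm being a polynomial in the matrix — same computation). No gaps.
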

 \begin{proof}Note that for pairwise commuting   matrices $A_1,\dots, A_m$, we have a decomposition
 $
  \C^d=\bigoplus_{i=1}^tV_i
 $
 where each $A_j$ preserves $V_i$ and
 has only one eigenvalue $\la_{j,i}$. Their restrictions of $A_1,\dots, A_m$
  on $V_i$ remain commutative pairwise.
 Let $\dim V_i=d_i, d_0'=0, d_{i+1}'-d_i'=d_i$.
 By \rl{up-t}, we can find a basis
 $$e^*_{d'_{i-1}+1}, \dots, e^*_{d_{i-1}'+d_{i}}$$
 for $V_i$ such that $A_1|_{V_i}, \dots, A_m|_{V_i}$ are upper triangular matrices. Using the new basis $e_1^*,\dots, e_d^*$ we can find the matrix $S$ for the decomposition \re{hataj}. Therefore,
  $ \hat A_{1, d_i},\dots, \hat A_{m,d_i}$
  commute pairwise.

Assume now that all $\la_{j,i}$ are non-zero.
 It remains to show that $\ln \hat A_{1, d_i},\dots, \ln \hat A_{m,d_i}$ commute pairwise.
 Write
 $$
 \hat A_{j,i}=\la_{j,i}I_{d_i}+W_{j,i}
 $$
 where $W_{j,i}$ are upper triangular  matrices and  $W_{j,i}^{d_i}=0$.
 By a straightforward computation, we have
 $$
 [W_{j,i}, W_{j',i}]=[\hat A_{j,i},\hat A_{j',i}]=0.
 $$
 Therefore, $W_{j,i}^k$ commutes with $W_{j',i}^\ell$ for any $k,\ell$. Consequently,  the finite sum
 $$
 \ln \hat A_{j,i}=\ln\hat\la_{j,i}I_{d_i}-\sum_{k>0}\frac{(-\la_{j,i}^{-1}W_{j,i})^k}{k}
 $$
 commutes with $\ln\hat A_{j',i}$. Therefore,  $\ln A_1,\dots, \ln A_m$ in \re{defLn1}
 commute pairwise.
  \end{proof}

    \begin{lemma}\label{muit-flows}
 Let $A_1,\dots, A_n$ be   non-singular  upper triangular   $d\times d$ matrices.
 Suppose that $A_1,\dots, A_n$ commute pairwise.
 There exists a linear mapping $w\to\tilde v^z(w):=v(z)w$ in  $\C^d$,
  entire in $z\in\C^n$  such that 
 $v(z)\in GL_d(\C)$, $v(0)=Id$ and $v(e_j)=A_j$ for $j=1,\ldots,n$.  Furthermore,  $v({z+z'})=v(z) v({z'})$ for all $z,z'\in\C^n$.
 \end{lemma}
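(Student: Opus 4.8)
The plan is to realize $v$ as the exponential of the commuting logarithms of the $A_j$ furnished by Proposition~\ref{defLnA}. Since $A_1,\dots,A_n$ are non-singular and pairwise commuting, I would set $B_j:=\ln A_j$ as in \eqref{defLn1}; by Proposition~\ref{defLnA} the matrices $B_1,\dots,B_n$ commute pairwise and satisfy $e^{B_j}=A_j$. Then I would define
\[
v(z):=\exp\Bigl(\sum_{j=1}^n z_jB_j\Bigr),\qquad z=(z_1,\dots,z_n)\in\C^n,
\]
and take $\tilde v^z(w):=v(z)w$ for $w\in\C^d$.

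Next I would verify the asserted properties, all of which then follow at once. The entries of $\sum_j z_jB_j$ are linear in $z$, so $v(z)$ --- being given by the everywhere-convergent series $\sum_{k\ge0}\frac1{k!}\bigl(\sum_j z_jB_j\bigr)^k$ --- is entire in $z$, and it lies in $GL_d(\C)$ with two-sided inverse $\exp\bigl(-\sum_j z_jB_j\bigr)$. Evidently $v(0)=I_d$, and evaluating at the $j$-th standard basis vector of $\C^n$ gives $v(e_j)=e^{B_j}=A_j$. For the homomorphism property I would use that, since the $B_j$ commute, $\sum_j z_jB_j$ and $\sum_j z'_jB_j$ commute for all $z,z'\in\C^n$; hence $e^{X+Y}=e^Xe^Y$ for commuting $X,Y$ gives
\[
v(z+z')=\exp\Bigl(\sum_j z_jB_j+\sum_j z'_jB_j\Bigr)=v(z)\,v(z').
\]

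The main obstacle has, in fact, already been overcome upstream: the delicate point is that the logarithms must be chosen so as to commute pairwise. A naive choice --- diagonalizing or Jordan-normalizing each $A_j$ on its own --- would in general destroy $[\ln A_i,\ln A_j]=0$, and then the relation $v(z+z')=v(z)v(z')$ would break down. This is precisely why the simultaneous block-triangularization of Lemma~\ref{up-t} and the resulting commuting logarithms of Proposition~\ref{defLnA} are needed; granting those, the proof of the lemma reduces to the short verification above.
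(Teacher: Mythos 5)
Your proof is correct and follows essentially the same route as the paper: both invoke Proposition~\ref{defLnA} to obtain pairwise commuting logarithms $B_j=\ln A_j$ and then set $v(z)=e^{z_1B_1}\cdots e^{z_nB_n}=\exp\bigl(\sum_j z_jB_j\bigr)$, the paper merely phrasing this as the composition of the commuting flows of $\dot w=B_jw$. Your explicit verification of entirety, invertibility, and the homomorphism property is a fine (slightly more detailed) rendering of the same argument.
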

 \proof{By \rp{defLnA}, we  define pairwise commuting matrices $\ln A_1,\dots, \ln A_n$ such that $e^{\ln A_j}=A_j$.
  Then the Lie brackets of the vector fields for $\dot w=\ln A_jw, j=1,\dots,n$ vanish and their flows $\var_j^t(w)$  commute pairwise.
 Note that
 $$
 \var_j^0(w)=w, \quad
 \var_j^1(w)=e^{\ln A_j}w=A_jw.
 $$
 Define
 $$
 \tilde v^z(w)=\var_1^{z_1}\cdots\var_n^{z_n}(w).
 $$
  We conclude $\tilde v^{z}\tilde v^{z'}(w)=\tilde v^{z+z'}(w)$, that is $v(z+z')=v(z)v({z'})$.
\qedhere }


By \rp{defLnA}, we define $\ln\rho(e_1), \dots, \ln\rho(e_{2n})$ and they commute pairwise.
We now define $\ln\rho(\la)$ for all $\la=\sum_{j=1}^{2n}m_je_j\in \Lambda$ as follows
$$
\ln\rho\left(\sum_{j=1}^{2n}m_je_j\right):=\sum_{j=1}^{2n} m_j\ln\rho(e_j).
$$
Thus the matrices $\ln\rho(\la)$ for $\la\in\Lambda$ commute pairwise.

\begin{prop}\label{pi_SEtrivial}
Let $E$ be a flat vector bundle on $C$.
  Then  $\pi_{\cyl}^*E$ admits a factor of automorphy $\rho$ satisfying $\rho(e_j)=Id$ for $j=1,\dots, n$; in particularly, $\pi_\cyl^*E$ is  holomorphically trivial.
\end{prop}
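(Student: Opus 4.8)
The plan is to start from an arbitrary factor of automorphy $\rho$ for the flat bundle $E$, which by the discussion in Section 3 we may take to depend only on $\la \in \Lambda$ and to have pairwise commuting values $\rho(e_1),\dots,\rho(e_{2n})$, hence in particular $\rho(e_1),\dots,\rho(e_n)$ commute pairwise. We then want to change the factor of automorphy within its equivalence class \re{nuLaz}, i.e. replace $\rho(\la)$ by $\tilde\rho(\la,z) = \nu(z+\la)\rho(\la)\nu(z)^{-1}$ for a suitable entire $\nu\colon\C^n\to GL(d,\C)$, so that the new factor satisfies $\tilde\rho(e_j) = Id$ for $j=1,\dots,n$. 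The natural candidate is to produce $\nu$ from a ``multiplicative flow'' that interpolates the matrices $\rho(e_1),\dots,\rho(e_n)$, so that conjugation by it exactly cancels them along the first $n$ lattice directions.

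The key steps, in order, are as follows. First, apply \rp{defLnA} to the pairwise commuting non-singular matrices $A_j := \rho(e_j)$, $j=1,\dots,n$, to obtain pairwise commuting logarithms $\ln A_j$ with $e^{\ln A_j} = A_j$; after the common conjugation $S$ of that proposition we may moreover assume the $A_j$ are upper triangular, so that \rl{muit-flows} applies. Second, invoke \rl{muit-flows} to get an entire map $z\mapsto v(z)\in GL(d,\C)$ with $v(0)=Id$, $v(e_j)=A_j$ for $j=1,\dots,n$, and the cocycle identity $v(z+z') = v(z)v(z')$. Third, set $\nu(z) := v(z)^{-1}$ (entire, $GL(d,\C)$-valued), and compute, for $\la = e_j$ with $1\le j\le n$,
\eq{newrho}
\tilde\rho(e_j, z) = \nu(z+e_j)\,\rho(e_j)\,\nu(z)^{-1} = v(z+e_j)^{-1} A_j\, v(z) = v(z)^{-1} v(e_j)^{-1} A_j\, v(z) = v(z)^{-1} A_j^{-1} A_j\, v(z) = Id,
\eeq
using $v(z+e_j) = v(e_j)v(z) = A_j v(z)$ and $v(e_j)=A_j$. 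Thus the new factor of automorphy $\tilde\rho$ satisfies $\tilde\rho(e_j) \equiv Id$ for $j=1,\dots,n$. Fourth, observe that since $\rho$ was $z$-independent and $v$ satisfies the cocycle identity, $\tilde\rho(\la,z)$ is still $z$-independent for every $\la\in\Lambda$: indeed $\tilde\rho(\la,z) = v(z)^{-1} v(\la)^{-1}\rho(\la) v(z)$, and because $v(\la)^{-1}\rho(\la)$ commutes with the values of $v$ — which it does, as everything in sight is built from the mutually commuting $\ln A_j$ together with $\rho(\la)$, and $\rho(\Lambda)$ is abelian and commutes with each $A_j$ — the conjugation by $v(z)$ is trivial. (One should double-check this commutativity carefully; if it fails for the $e'_j$ directions it does not matter, since we only need $z$-independence of $\tilde\rho(e_j,z)$, which \re{newrho} already gives directly.) Finally, restricting to $\la\in\Z^n = \langle e_1,\dots,e_n\rangle$ and using \re{eqrelS}, the relation $(z,\xi)\sim(z+e_j,\xi)$ shows that $\pi_\cyl^*E$ is defined by the trivial identification, hence is holomorphically trivial $\cyl\times\C^d$.

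I expect the main obstacle to be the bookkeeping in the fourth step: verifying that conjugating $\rho$ by the interpolating flow $v(z)$ genuinely produces a $z$-independent $\tilde\rho$ (equivalently, that $v(\la)^{-1}\rho(\la)$ commutes with all $v(z)$), which requires tracking that all the relevant matrices — the $\ln A_j$, the flow values $v(z)$, and $\rho(\la)$ for $\la\in\Lambda$ — lie in a common commutative algebra. The logarithms $\ln A_1,\dots,\ln A_n$ commute pairwise by \rp{defLnA}, and $\rho(\Lambda)$ is abelian with $\rho(e_j)=A_j$; the point to nail down is that $\ln A_j$ commutes with $\rho(e'_i)$, which follows from the construction of $\ln A_j$ as a polynomial (in each generalized eigenspace) in $A_j$. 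Once that is in hand, $z$-independence of $\tilde\rho$ is immediate and the triviality of $\pi_\cyl^*E$ follows formally. If one prefers to avoid this global commutativity argument, it suffices to note that $\tilde\rho(e_j,z)\equiv Id$ by \re{newrho} and that by \re{abelP} the remaining values $\tilde\rho(e'_i,z)$ need not be normalized for the triviality conclusion, since the identification \re{eqrelS} only involves $\la\in\Z^n$.
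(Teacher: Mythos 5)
Your proposal is correct and follows essentially the same route as the paper: both apply \rp{defLnA} and \rl{muit-flows} to build the interpolating cocycle $v(z)$ with $v(z+z')=v(z)v(z')$ and then conjugate the constant factor of automorphy by it to kill $\rho(e_1),\dots,\rho(e_n)$, your choice $A_j=\rho(e_j)$, $\nu=v^{-1}$ being just the inverse of the paper's convention $A_j=\rho(e_j)^{-1}$, $\nu=v$. The only difference is in justifying that $\rho(\la)$ commutes with $v(z)$ — you argue via $\ln A_j$ being a polynomial in $A_j$ on each joint invariant subspace, while the paper defines $\ln\rho(\la)$ for all $\la\in\Lambda$ and uses commuting flows — and both arguments are sound.
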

\proof{  Let $d$ be the rank of $E$.
Let $A_j=\rho(e_j)^{-1}$ for $j=1,\dots, n$. With $v(e_j)=A_j$ and $v(0)=Id_d$, we first see that
$$
\tilde\rho(\la,z):=v(z+\la)\rho(\la)v(z)^{-1}
$$
satisfies $\tilde\rho(e_j,0)=Id_d$.  We want to show that $\tilde\rho(\la,z)$ depends only on $\la$. Fix  $\la=\sum_{j=1}^{2n}m_je_j\in \Lambda$. By definition,
the matrix $\ln\rho(\la)$ commutes with each $\ln\rho(e_j)$, $j=1,\ldots, 2n$. Thus the flow $\var_\la^t$ of $\dot w=
\ln\rho(\la)w$ commutes with the flows of
$\dot w=
\ln\rho(e_j)w$, $j=1,\ldots, 2n$.  As in the proof of the previous lemma, we know that $\var_\la^t(w)$ is linear in $w$ and entire in $t\in\C$. For $z\in \C^n$ and $w\in \C^d$, let $\tilde v^z(w)$ be as defined in the previous lemma.   Thus we have
$$
\var_\la^t \tilde v^z(w)=\tilde v^z\var_\la^t(w).
$$
Taking derivatives in $w$ and plugging in $t=1$, we get $$
\exp(\ln\rho(\lambda))v(z)=v(z)\exp(\ln\rho(\lambda)).
$$ Since $\exp(\ln\rho(\lambda))=\rho(\lambda)$, we have  $\rho(\la)v(z)=v(z)\rho(\la)$ for all $\la\in\Lambda$, $z\in \C^n$.
Considering $z=z_1e_1+\cdots +z_ne_n\in \Lambda$, we have $v(z+\la)=v(z)v(\la)$. Hence,    $\tilde\rho(\la, z)$ is independent of $z$.

We have achieved $\tilde\rho(\la)=\nu(z+\la)\rho(z)\nu(z)^{-1}$ and $\tilde\rho(e_j)=Id_d$ for $j=1,\dots, n$. Therefore, $\pi|_\cyl^*E$ is trivial, by the equivalence relation \re{eqrelS}.
\qedhere}

It is known that there are Stein manifolds with non-trivial vector bundle \cite{forster-rammspott}. Furthermore, we conclude the section by emphasizing that the triviality $\pi^*|_\cyl E$ relies on the extra assumption that it is a pull-back bundle. The following result is likely known, but we include a short proof for completeness.
\begin{prop}The set of holomorphic equivalence classes of flat holomorphic line bundles on $\cyl$ can be identified with $H^1(\cyl,\mathbb C^*)$. The latter is non-trivial.
\end{prop}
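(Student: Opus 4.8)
The plan is to compute $H^1(\cyl,\C^*)$ using the exponential sheaf sequence and to interpret flat line bundles as elements of this group. First I would note that flat holomorphic line bundles on any complex manifold are classified by $H^1$ with coefficients in the sheaf of locally constant $\C^*$-valued functions, i.e. by $H^1(\cyl,\C^*)$ where $\C^*$ denotes the constant sheaf; this is exactly the statement that a flat bundle is given by locally constant transition cocycles $g_{jk}\in\C^*$ modulo the coboundary relation $\tilde g_{jk}=h_j^{-1}g_{jk}h_k$ with $h_j$ locally constant, which is the content of the factor-of-automorphy discussion of Section~2 specialized to constant transition data. Alternatively one can phrase this via the monodromy representation $\pi_1(\cyl)\to\C^*$, and since $\C^*$ is abelian this factors through $H_1(\cyl,\Z)$, giving $H^1(\cyl,\C^*)\cong\operatorname{Hom}(H_1(\cyl,\Z),\C^*)$.

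Next I would identify the topology of $\cyl=\R^n/\Z^n+i\R^n$. Since the $i\R^n$ factor is contractible, $\cyl$ is homotopy equivalent to the real torus $\R^n/\Z^n=(\SSS^1)^n$. Hence $H_1(\cyl,\Z)\cong\Z^n$ and, for any abelian group $G$ viewed as a constant sheaf, $H^1(\cyl,G)\cong\operatorname{Hom}(\Z^n,G)\cong G^n$ (one can also see this directly from the Künneth formula or from $H^1((\SSS^1)^n,G)\cong G^n$). Applying this with $G=\C^*$ gives
\eq{H1cyl}
H^1(\cyl,\C^*)\cong(\C^*)^n.
\eeq
Since $n\geq 1$, this group is non-trivial, which is the assertion. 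To make the identification with flat line bundles fully explicit and self-contained, I would exhibit for each $(a_1,\dots,a_n)\in(\C^*)^n$ the flat line bundle on $\cyl$ with factor of automorphy $\rho(e_j'\!\!\mod\Z^n)=a_j$ relative to the lattice $\Lambda'$ defining $\cyl=\C^n/\Z^n$ over $C$ — wait, more simply, relative to the presentation $\cyl$ as a quotient of its own universal cover $\C^n$ by $\Z^n$: the bundle given by $\rho(e_j)=a_j$. Two such are holomorphically equivalent only if the $a_j$ coincide, because a holomorphic equivalence would be a nowhere-vanishing holomorphic function on $\C^n$ intertwining the two constant cocycles, forcing the monodromies to agree.

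The main obstacle, and really the only subtle point, is justifying that distinct tuples $(a_1,\dots,a_n)$ give non-isomorphic \emph{holomorphic} line bundles — i.e. that the natural map from flat (locally constant cocycle) classes to holomorphic isomorphism classes is injective on this family. This is where one must be careful: a priori a holomorphic trivialization $h_j$ of the difference bundle need not be locally constant. The resolution is the observation already used repeatedly in the paper: if $h$ is a nowhere-vanishing holomorphic function on $\C^n$ with $h(z+e_j)=(a_j/a_j')h(z)$ for all $j$, then $|h|$ is a positive function invariant under the $\Z^n$-action on the real-torus directions, hence bounded, so $h$ and $1/h$ are both bounded holomorphic functions on $\C^n$ and therefore constant by Liouville, forcing $a_j=a_j'$. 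This pins down the injectivity and completes the identification $H^1(\cyl,\C^*)\cong(\C^*)^n\neq\{1\}$.
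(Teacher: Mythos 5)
Your treatment of the second assertion (non-triviality) is correct and takes a different route from the paper's: you compute $H^1(\cyl,\C^*)\cong\operatorname{Hom}(H_1(\cyl,\Z),\C^*)\cong(\C^*)^n$ directly from the homotopy equivalence $\cyl\simeq(\SSS^1)^n$, whereas the paper argues by contradiction from the long exact sequence of $0\to\Z\to\C\to\C^*\to0$, using that $H^1(\cyl,\Z)\cong\Z^n$ cannot be isomorphic to $H^1(\cyl,\C)\cong\C^n$. Your version is more explicit and buys you the actual group, not just its non-triviality; that part is fine.

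The first assertion — that flat classes inject into holomorphic classes — is where your argument has a genuine gap, and the gap is exactly at the point you yourself identify as "the only subtle point." You reduce injectivity to: a nowhere-vanishing holomorphic $h$ on $\C^n$ with $h(z+e_j)=(a_j/a_j')h(z)$ must be constant. The Liouville step fails twice over. First, $|h|$ is multiplied by $|a_j/a_j'|$ under $z\mapsto z+e_j$, so it is not $\Z^n$-invariant unless all these moduli equal $1$. Second, and decisively, even a genuinely $\Z^n$-periodic $|h|$ need not be bounded: the fundamental domain $[0,1)^n+i\R^n$ is unbounded in the imaginary directions, since $\cyl\cong(\C^*)^n$ is a non-compact Stein manifold rather than a torus. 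Indeed the intermediate claim you are trying to prove is false: for any $c\in\C^n$ the function $h(z)=e^{2\pi i\langle c,z\rangle}$ satisfies $h(z+e_j)=e^{2\pi ic_j}h(z)$, so it intertwines the constant factor of automorphy $(a_j)_j$ with $(e^{2\pi ic_j}a_j)_j$; since $c_j\mapsto e^{2\pi ic_j}$ is onto $\C^*$, every constant factor of automorphy on the universal cover of $\cyl$ is holomorphically equivalent to the trivial one. (This is consistent with $\cyl$ being Stein with $H^1(\cyl,\mathcal O)=0$ and torsion-free $H^2(\cyl,\Z)$, so that topologically trivial line bundles are holomorphically trivial.) Consequently no argument carried out on the universal cover $\C^n$ can deliver the injectivity you want.

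The paper's proof is of a completely different nature and never passes to $\C^n$: it works with constant Čech cocycles relative to a fixed finite covering $\{U_j\}_{j=1}^4$ of $\cyl$ with $\bigcap_jU_j\neq\emptyset$, observes that a holomorphic coboundary $\{h_j\}$ between two constant cocycles forces each $h_jh_k^{-1}$ to be constant on $U_j\cap U_k$, and evaluates at a common point to manufacture a constant coboundary. You should be aware, however, that your explicit gauge transformation $e^{2\pi i\langle c,z\rangle}$ above does more than break your own argument: it exhibits a holomorphic isomorphism between flat line bundles with distinct monodromies, so it puts the first assertion of the proposition itself in tension with the natural reading of "holomorphic equivalence," and correspondingly the existence of a covering with the properties the paper requires (Leray, with contractible nerve, on a space with $H^1\neq0$) deserves scrutiny. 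At a minimum, your proof as written does not establish the claim, and the failure is structural rather than a missing estimate.
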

\begin{proof} Each element $\{c_{jk}\}$ in $H^1(\cyl,\mathbb C^*)$ is clearly an element in $H^1(\cyl,\mathcal O^*)$.
We want to show that if $\{c_{jk}\},\{\tilde c_{jk}\}\in H^1(\cyl,\mathbb C^*)$ represent the same element in   $H^1(\cyl, \mathcal O^*)$, then they are also the same in $H^1(\cyl,\mathbb C^*)$. Indeed, we can cover $\cyl$ by convex open sets $U_1,U_2,U_3,U_4$ such that $U_1\cap U_2\cap U_3\cap U_3$ is non empty. Thus $\{U_i\}$ is a Leray covering. If $\tilde c_{jk}=h_jc_{jk}h_k^{-1}$, where each $h_j$ is a non-vanishing holomorphic function on $U_j$. Take $p$ in all $U_j$. We get $\tilde c_{jk}=c_jc_{jk}c_k^{-1}$ for $c_j=h_j(p)$.

  Note that
  $H^1(\cyl,\mathbb C^*)$ is non-trivial.
  Otherwise,   the  exact sequences $0\to\mathbb Z\to\mathbb C\to\mathbb C^*\to0$ and
  $0\to  H^0(\cyl,\mathbb Z)\to  H^0(\cyl,\mathbb C)\to  H^0(\cyl,\mathbb C^*)\to 0$ would imply that
  $H^1(\cyl, \Z) \cong H^1(\cyl,\C)$, a contradiction.
 \end{proof}
\setcounter{thm}{0}\setcounter{equation}{0}

%

\setcounter{thm}{0}\setcounter{equation}{0}

\section{Equivalence  of neighborhoods and commuting deck transformations}

 In this section, we will discuss how the classification of neighhorhoods $U$ of a compact complex manifold $C$ is related to the classification of deck transformations of a holomorphic covering $\tilde U\to U$, where $U,\tilde U$ are chosen carefully and $\tilde U$ contain $C^*$ that covers $C$. When $C^*$ is additionally Stein, we can choose $\tilde U$ to be a neighborhood of $C^*$ in its normal bundle $N_{C^*}(\tilde U)$ 
  by applying a result of Siu.  After  preliminary results in \rl{deck-tran} and \rl{CinN}, we will return to our previous study where $C$ is a complex torus, and its covering of $C$ is the Stein manifold  $C^*=\tilde C$, and $N_C(M)$ is Hermitian flat. We then prove the main result of this paper by using a KAM rapid iteration scheme.

Let us start with $\iota: C\hookrightarrow M$, a holomorphic embedding of a compact complex manifold $C$. We shall still denote $\iota(C)$ by $C$.  Let $U$ be a neighborhood of $C$ in $M$ such that $U$ admits a smooth, possibly non-holomorphic,  \emph{deformation or strong retract} $C$~\ci[p.~361]{MR3728284}; namely there is a smooth mapping $R\colon U\times [0,1]\to U$ such that $R(\cdot,0)=Id$ on $U$, $R(\cdot,t)=Id$ on $C$, and $R(\cdot,1)(U)=C$. Thus, $\pi_1(U,x_0)=\pi_1(C,x_0)$ for $x_0\in C$ (see~\ci[p.~361]{MR3728284}).   When $M$ is $N_C$, we can find a {\it holomorphic}  deformation retraction from a suitable neighborhood of its the zero section onto $C$,   by using  a  Hermitian metric on $N_C$.

Let $X$ be a complex manifold and  $\cL X$ be a universal covering of $X$. Then the group of deck transformations of the covering is identified with $\pi_1(X, x_0)$. The set of equivalence classes of coverings of $X$ is identified with the conjugate classes of subgroups of $\pi_1(X, x_0)$; see~\ci[Thm.~79.4, p.~492]{MR3728284}. Furthermore, $\pi_1(X, x_0)$ acts transitively and freely on each fiber of the covering and $X$ is the quotient of $\cL X$ by $\pi_1(X, x_0)$.
\begin{lemma}\label{deck-tran}Let $C$ be a compact complex manifold. Let $\pi\colon C^*\to C$ be a holomorphic covering and $\pi(x_0^*)=x_0$. Suppose that $(M,C)$ (resp. $(M', C)$) is a holomorphic neighborhood of $C$. There is a neighborhood $U$ in $M$ (resp. $U'$ in $M'$) of $C$ 
 and  a holomorphic neighborhood  $\tilde U$ $($resp. $\widetilde{U'})$ of $C^*$ such that $p\colon\tilde U\to U$ $($resp $p\colon\widetilde{U'}\to U')$ is an extended covering of the covering $\pi\colon C^*\to C$ and $C$
  $($resp. $C^*)$   is a smooth strong retract of $U,U'$ $($resp. $\tilde U,\widetilde{U'})$. Consequently, $$
  \pi_1(\tilde U,x_0^*))=\pi_1(C^*,x_0^*), \quad \pi_1(U,x_0)=\pi_1(C,x_0).
$$
Suppose that $(M,C)$ is biholomorphic to $(M',C)$. Then $U,U', \tilde U,\widetilde{U'}$
can be so chosen that there is covering transformation sending $\tilde U$ onto $\widetilde{U'}$ and fixing $C^*$ pointwise. Conversely, if there is a convering transformation sending $\tilde U$ onto $\widetilde{U'}$ fixing $C^*$ pointwise, then $(U,C),(U',C)$ are holomorphically equivalent.
\end{lemma}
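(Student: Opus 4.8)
\noindent The plan is to reduce everything to elementary covering-space theory once a convenient neighborhood of $C$ has been fixed. Since $C$ is a compact complex submanifold of $M$, we may take a neighborhood $U$ of $C$ in $M$ carrying a smooth strong deformation retraction $R\colon U\times[0,1]\to U$ onto $C$ (its existence is part of the hypotheses recalled above; when $M=N_C$ one gets $R$ holomorphically from a Hermitian metric). The key point for later is that any smaller neighborhood $C\subset U_1\subset U$ which still retracts onto $C$ does the job equally well, so the construction is stable under shrinking; in particular the inclusion $\iota\colon C\hookrightarrow U$ is a homotopy equivalence and $\iota_*\colon\pi_1(C,x_0)\to\pi_1(U,x_0)$ is an isomorphism.

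Next I would construct the extended covering. Put $H:=\pi_*\pi_1(C^*,x_0^*)\subseteq\pi_1(C,x_0)$. Because $U$ is a manifold, the subgroup $\iota_*H\subseteq\pi_1(U,x_0)$ determines a connected pointed covering $p\colon\tilde U\to U$ with $p_*\pi_1(\tilde U)=\iota_*H$ (see \cite{MR3728284}); as $p$ is a local homeomorphism one transports the complex structure of $U$ to make $p$ a holomorphic covering. Lifting $R\circ(p\times\mathrm{id})$ along $p$, with the lift at time $0$ equal to $\mathrm{id}_{\tilde U}$, produces by the homotopy lifting property a smooth map $\tilde R\colon\tilde U\times[0,1]\to\tilde U$; its end map covers $R(\cdot,1)$ and hence has image in $p^{-1}(C)$, while for $x^*\in p^{-1}(C)$ the path $t\mapsto R(p(x^*),t)$ is constant and so lifts to the constant path. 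Thus $\tilde R$ is a strong deformation retraction of $\tilde U$ onto $p^{-1}(C)$; in particular $p^{-1}(C)$ is connected with $\pi_1(p^{-1}(C))=\pi_1(\tilde U)$, so as a pointed covering of $C$ it has associated subgroup $H$ and is therefore biholomorphic over $C$ to $\pi\colon C^*\to C$. Identifying $C^*$ with $p^{-1}(C)$, we have the desired extended covering, $C^*$ is a strong retract of $\tilde U$, and $\pi_1(\tilde U,x_0^*)=\pi_1(C^*,x_0^*)$, $\pi_1(U,x_0)=\pi_1(C,x_0)$. Running the same recipe inside $M'$ gives $p\colon\widetilde{U'}\to U'$.

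For the equivalence, first suppose $\Phi$ is a biholomorphism between neighborhoods of $C$ in $M$ and in $M'$ restricting to the identity on $C$ (the reading that matches the conclusion) and fixing $x_0$. Shrinking $U$ into the domain of $\Phi$ and setting $U':=\Phi(U)$, the conjugate $\Phi\circ R\circ(\Phi^{-1}\times\mathrm{id})$ retracts $U'$ onto $C$, so the previous paragraph applies to both sides. Since $\Phi_*\circ\iota_*=\iota'_*\circ(\Phi|_C)_*=\iota'_*$, the isomorphism $\Phi_*$ carries $p_*\pi_1(\tilde U)=\iota_*H$ onto $p'_*\pi_1(\widetilde{U'})=\iota'_*H$; by the lifting criterion $\Phi\circ p$ lifts to a unique basepoint-preserving holomorphic map $\tilde\Phi\colon\tilde U\to\widetilde{U'}$, which is a biholomorphism because the lift of $\Phi^{-1}\circ p'$ is its inverse. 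On $C^*$ the map $\tilde\Phi$ covers $\Phi\circ\pi=\pi$, has image in $p'^{-1}(C)=C^*$, and fixes $x_0^*$, hence is a deck transformation of $C^*\to C$ fixing a point, i.e.\ the identity; so $\tilde\Phi$ fixes $C^*$ pointwise. Conversely, a covering transformation $\tilde\Phi\colon\tilde U\to\widetilde{U'}$ descends to a biholomorphism $\Phi\colon U\to U'$ with $p'\circ\tilde\Phi=\Phi\circ p$; evaluating at $x^*\in C^*$ with $\tilde\Phi(x^*)=x^*$ and using $p'|_{C^*}=\pi=p|_{C^*}$ gives $\Phi(p(x^*))=p(x^*)$, so $\Phi$ fixes $C=p(C^*)$ pointwise and $(U,C)$ is holomorphically equivalent to $(U',C)$.

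The main obstacle is not a deep one: it is the bookkeeping needed to keep the topological and holomorphic data in step. Concretely, one must check that the abstract covering attached to $\iota_*H$ really restricts over $C$ to the \emph{given} holomorphic covering $\pi\colon C^*\to C$ (not merely an isomorphic copy), that the maps it produces --- $p$, the retraction lift $\tilde R$, the conjugating lift $\tilde\Phi$ --- are holomorphic (which reduces to $p,p'$ being local biholomorphisms), and above all that the four neighborhoods $U,U',\tilde U,\widetilde{U'}$ and their basepoints can be chosen to shrink compatibly so that $\Phi(U)=U'$, all four retract onto $C$ (resp.\ $C^*$), and $\tilde\Phi$ is a biholomorphism of the full total spaces. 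This is handled by working throughout with the cofinal family of neighborhoods of $C$ admitting smooth strong deformation retractions onto $C$, which is stable under the operations used above.
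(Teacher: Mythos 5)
Your proposal is correct and follows essentially the same route as the paper: extend the given covering of $C$ to a covering of a retractible neighborhood $U$, lift the strong deformation retraction by the homotopy lifting property, and lift the biholomorphism (and its inverse) using the equality of the pushed-forward fundamental groups. The only difference is that you reconstruct the extension of the covering from the subgroup correspondence, whereas the paper simply cites this as a known theorem (Vick, Thm.~4.9), so your argument just spells out that citation.
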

\begin{proof}
 Since $\pi\colon C^*\to C$ is a covering map,  
according to \cite[Thm.~4.9]{vick-book}, it extends to a covering map $p:\tilde U \to U$  such that $\tilde U$ contains $C^*$ and $p|_{C^*}=\pi$. Suppose that $R$ is a strong retraction of $U$ onto $C$. We can lift $R(z,\cdot)\colon[0,1]\to U$ to a continuous mapping $\tilde R(\tilde z,\cdot)\colon[0,1] \to \tilde U$ such that $\tilde R(\tilde z,0)=\tilde z$ and $p\tilde R(\tilde z,.)=R(p(\tilde z),.)$ for all $\tilde z\in\tilde U$. One can verify that $\tilde R$  is a strong retraction  of $\tilde U$ onto $C^*$.

Suppose that a biholomorphic map $f$ sends  $(M,C)$ onto $(M',C)$ fixing $C$ pointwise. We may assume that $f$ is a biholomorphic mapping from $U$ onto $U'$. Then we can lift the mapping $f\pi\colon \tilde U\to { U'}$ to obtain a desired covering biholomorphism $F$, since $(f\pi)_*\pi_1(\tilde U,x_0^*)=\pi_*\pi_1(C^*,x_0^*)$. Conversely, a covering biholomorphism from $\tilde{U}$ onto $\tilde{ U'}$ fixing $C^*$ pointwise clearly induces a biholomorphism from $U$ onto $U'$ fixing $C$ pointwise.
\end{proof}

With the covering, we can identity $(M,C)$ with $(\tilde M, C^*)/{\,\sim}$ where $\tilde p\sim p$ if and only if $p,\tilde p$ are in the same stack of the covering $\pi$.

Applying the above to $(N_C,C)$ and a covering $\pi|_{C^*}\colon C^*\to C$, we have a covering $\hat\pi\colon \widetilde{N_C}\to N_C$ such that
$$
C^*\subset\widetilde{N_C}, \quad \pi_1(\widetilde{N_C},x_0^*)=\pi_1(C^*,x_0^*), \quad \pi_1(N_C,x_0)=\pi_1(C,x_0).
$$

To simplify the notation, we denote $U,\tilde U$ by $M,\tilde M$ respectively.
Thus we have commuting diagrams for the coverings:
$$
\begin{matrix}
	\widetilde{N_C} & \hookleftarrow& C^* & \hookrightarrow & {\tilde M}\\
	\hat \pi\downarrow & 	&\pi_{C^*}\downarrow& & p\downarrow \\
	N_C& \hookleftarrow &	C & \hookrightarrow & M
\end{matrix}.
$$
The set of deck transformations of $p$ (resp. $\hat\pi$) will be denoted by $\{\tau_1,\dots, \tau_n\}$ (resp. $\{\hat\tau_1,\dots, \hat\tau_n\}$).
If $\pi\colon \tilde U\to U$ is a covering map, $Deck(\tilde U)$ denotes the set of deck transformations.

\begin{lemma}\label{CinN}
Let $C, C^*, M$ be as in \rla{deck-tran}. Suppose that $C^*$ is a Stein manifold. Let $\om_0^*$ be an open set of $\widetilde{N_C}$ such that $\hat\pi(\om^*)$ contains $C$. Then there is an open subset $\om^*$ of $\om^*_0$ such that $\hat\pi\om^*$ contains $C$ and  $(M,C)$ is holomorphically equivalent to the quotient space of $\om^*$ by $Deck( \widetilde{N_C})$.
\end{lemma}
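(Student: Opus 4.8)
\emph{Proof plan.} The plan is to lift the whole situation to $C^*$, which is Stein so that Siu's theorem applies, linearize the neighborhood there, and then push the covering structure back down. First I would apply \rl{deck-tran} to $(M,C)$ and the covering $\pi\colon C^*\to C$: this gives (after shrinking $M$, whose name I keep) an extended holomorphic covering $p\colon\tilde M\to M$ with $p|_{C^*}=\pi$, in which $C^*$ is a closed complex submanifold of $\tilde M$, $\pi_1(\tilde M)=\pi_1(C^*)$, $\pi_1(M)=\pi_1(C)$, and $(M,C)$ is holomorphically identified with the quotient of $(\tilde M,C^*)$ by $G:=Deck(\tilde M)$, as recalled after that lemma. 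Next I would identify the normal bundles: since $p$ is a local biholomorphism carrying $C^*$ onto $C$, its differential identifies $N_{C^*}(\tilde M)$ with $\pi^*N_C$, and the induced map $N_{C^*}(\tilde M)\to N_C$ is a covering of $N_C$ restricting to $\pi$ on the zero section; as the covering of $N_C$ attached to the subgroup $\pi_*\pi_1(C^*)\subset\pi_1(C)=\pi_1(N_C)$ (the last equality since the zero section is a deformation retract of $N_C$) is unique up to isomorphism, this covering must be $\hat\pi\colon\widetilde{N_C}\to N_C$. Thus there is a biholomorphism $N_{C^*}(\tilde M)\cong\widetilde{N_C}$ sending the zero section onto $C^*\subset\widetilde{N_C}$ and carrying $G$ onto $Deck(\widetilde{N_C})$.

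Since $C^*$ is Stein and closed in $\tilde M$, Siu's theorem \cite{siu-stein} then provides an open neighborhood $V$ of $C^*$ in $\tilde M$ and a biholomorphism $\Psi$ from $V$ onto an open neighborhood of the zero section $C^*$ in $N_{C^*}(\tilde M)\cong\widetilde{N_C}$, with $\Psi|_{C^*}=\mathrm{id}$ and $d\Psi$ inducing the identity on the normal bundle along $C^*$. Shrinking $V$ so that $\Psi(V)$ lies inside the given neighborhood $\om^*_0$ of $C^*$, I set $\om^*:=\Psi(V)\subset\om^*_0$; then $\hat\pi(\om^*)\supset\hat\pi(C^*)=C$.

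For the descent, on $\om^*$ let $u\sim u'$ mean that $\Psi^{-1}u$ and $\Psi^{-1}u'$ lie over the same point of $M$; equivalently --- the coverings here being regular, as is automatic when $\pi_1(C)$ is abelian, in particular for a complex torus --- $u\sim u'$ iff $u'=\Psi\gamma\Psi^{-1}(u)$ for some $\gamma\in G$, so that $\sim$ is the equivalence relation induced on $\om^*$ by $Deck(\widetilde{N_C})$ acting through the transported maps $\Psi\gamma\Psi^{-1}$. The map $p\circ\Psi^{-1}\colon\om^*\to M$ is a surjective open local biholomorphism onto the open neighborhood $p(V)$ of $C$ whose fibers are, by construction, the $\sim$-classes, so it descends to a biholomorphism $\om^*/{\sim}\;\cong\;p(V)$ taking $C^*/{\sim}$ to $C$; this is the claimed equivalence of $(M,C)$ with $\om^*/Deck(\widetilde{N_C})$.

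The step I expect to be the main obstacle is the normal-bundle identification $N_{C^*}(\tilde M)\cong\widetilde{N_C}$ together with its matching of the two deck groups: it rests on the uniqueness of coverings attached to a subgroup of $\pi_1$ and on $C\hookrightarrow N_C$ being a homotopy equivalence, and lining up the zero sections and the group actions correctly is where the care lies. It is also worth recording for later use that, under this identification, the elements of $Deck(\widetilde{N_C})$ act on $\om^*$ through the maps $\Psi\gamma\Psi^{-1}$, which by the last clause of Siu's theorem agree to first order along $C^*$ with the linear deck transformations $\hat\tau_1,\dots,\hat\tau_n$ of $\widetilde{N_C}$ but are genuine higher-order perturbations of them; this is precisely the configuration that the iteration scheme of Section~4 is built to straighten, and the descent above neither uses nor could use any invariance of $\om^*$ under the linear action.
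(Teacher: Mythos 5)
Your overall strategy is close in spirit to the paper's: both arguments rest on Siu's theorem plus uniqueness of coverings attached to a subgroup of $\pi_1$. You diverge in a reasonable way: you apply Siu only once (to $\tilde M$) and identify $\widetilde{N_C}$ with $N_{C^*}(\tilde M)=\pi^*N_C$ directly as the covering of $N_C$ determined by $\pi_*\pi_1(C^*)$, whereas the paper applies Siu to both $\tilde M$ and $\widetilde{N_C}$ and then compares the two resulting coverings of $M$. Your variant is cleaner on that point, and your closing observation --- that the deck group acts on $\om^*$ through $\Psi\gamma\Psi^{-1}$, i.e.\ through higher-order perturbations of $\hat\tau_1,\dots,\hat\tau_n$ --- is exactly the configuration the rest of the paper needs. (Note, though, that the lemma literally says ``quotient by $Deck(\widetilde{N_C})$''; your quotient is by the transported group, which is what the subsequent Proposition actually uses, so you should say explicitly that this is how you read the statement. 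Your restriction to regular coverings is also an added hypothesis, harmless here since $\pi_1(C)$ is abelian.)

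There is, however, one step that fails as written: ``Shrinking $V$ so that $\Psi(V)$ lies inside the given neighborhood $\om^*_0$ of $C^*$.'' The hypothesis is not that $\om_0^*$ is a neighborhood of $C^*$; it is only that $\hat\pi(\om_0^*)\supset C$. In the intended application $C^*=\cyl$ is non-compact and $\om_0^*$ is a thickened fundamental domain of the form $\Om_{\e_0}\times\Del_{r_0}^d$, which does \emph{not} contain $C^*$; consequently no open set containing all of $C^*$ can be shrunk to fit inside $\om_0^*$, and your choice $\om^*:=\Psi(V)$ cannot satisfy $\om^*\subset\om_0^*$. The obvious repair, $\om^*:=\Psi(V)\cap\om_0^*$, does not immediately give $\hat\pi(\om^*)\supset C$ either, because a point of $\om_0^*$ lying over $x\in C$ need not lie in $\Psi(V)$, and a point of $\Psi(V)$ over $x$ need not lie in $\om_0^*$. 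What rescues this is precisely the equivariance you disclaim at the end: one must use the deck action (equivalently, arrange the identification as an equivalence of coverings over a neighborhood of $C$ in $M$, defined on a \emph{saturated} set, as in the paper's formulation) to move the piece of $\Psi(V)$ over each point of $C$ into $\om_0^*$. So either strengthen the construction to produce a deck-compatible identification on a saturated domain and then cut down by $\om_0^*$, or record that you are proving the lemma only for $\om_0^*\supset C^*$, which is not the form in which it is subsequently applied.
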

\begin{proof}
 By a result of Siu \cite[Cor.~1]{siu-stein}, we find a biholomorphism $L$ from a holomorphic strong retraction neighborhood of $C^*$ in $\tilde M$, still denoted by $\tilde M$ into  $N_{C^*}(\tilde M)$ and  a biholomorphism $L'$ from a   strong retraction neighborhood of $C^*$ in $\widetilde{N_C}$, still denoted by  $\widetilde{ N_C}$ into  $N_{C^*}( \widetilde{N_C)}$. Furthermore, $L,L'$ fix $C^*$ pointwise.
  We have
 \aln
 p_*\pi_1(N_{C^*}(\tilde M),x_0^*)&=p_*L^{-1}_*\pi_1(\tilde M,x_0^*)=\pi_1(C,x_0)=\pi_1(L^{-1}\tilde M,x_0^*)\\
 &=\hat\pi_*(L')^{-1}_*(\pi_1(\widetilde{N_C},x_0^*)).
 \end{align*}
 Both $\hat\pi\circ L'^{-1}\colon N_{C^*}( \widetilde{N_C)})\to M$ and $p\circ L^{-1}\colon N_{C^*}( \tilde M)\to M$ are  coverings   and the above identifications show that the lifts of the two coverings yield a biholomorphism between neighborhoods of $C^*$ in $\tilde M$ and $N_{C^*}(\widetilde{N_C})$ fixing $C^*$ pointwise.
 \end{proof}

Here, $z=(z_1,\ldots,z_n)$ belongs to the fundamental domain
$$
\om_0=\left\{\sum_{j=1}^{2n} t_je_j\in\C^n\colon t\in[0,1)^{2n}\right\},\quad e_{n+i}:=\tau_i,\quad i=1,\ldots,n.
$$
Thus $h$ belongs to the fundamental domain $\Om_0$ defined
$$
{\Om 
}_0:=\{(e^{2\pi i\zeta_1}, \dots e^{2\pi i\zeta_n})\colon\zeta\in \om_0\},\quad {
\Om 
}_0^+=\{(|z_1|,\dots, |z_n|)\colon z\in {\Om 
}_0\}.
$$
Thus ${\Om 
}_0$ is a Reinhardt domain, being $\{(\nu_1R_1,\dots, \nu_nR_n)\colon |\nu_j|=1\colon R\in \Om 
_0^+\}$.
We have
$$
{\Om 
}_0^+=\left\{(e^{-2\pi R_1},\dots, e^{-2\pi R_n})\colon R=\sum_{i=1}^n t_i\IM\tau_i,t\in[0,1)^n\right\}.
$$
For $\epsilon>0$,  define a (Reinhardt) neighborhood ${\Om 
}_\epsilon$ of $\ov{{\Om 
}_0}$ by
\begin{gather*}
\om_\epsilon:=\left\{\sum_{j=1}^{2n} t_je_j\colon t\in[0,1)^n\times (-\epsilon,1+\epsilon)^{n}\right\},
\quad {\Om 
}_\epsilon:=\{(e^{2\pi i\zeta_1}, \dots e^{2\pi i\zeta_n})\colon\zeta\in \om_\epsilon\}.
\end{gather*}
For any $\ell$-tuple of indices in $\{n+1,\ldots,2n\}$, we set 
\ga{}
\om_\e^{j_1\dots j_\ell}:=
\left\{\sum_{j=1}^{2n} t_je_j\colon t_{k}\in(-\e,\e), k-n\in \{j_1,\dots, j_\ell\};   t_{k}\in(-\e,1+\e), k-n\not\in \{j_1,\dots, j_\ell\}
  \right\},
 \\
\tilde \om_\e^{j_1\dots j_\ell}:=
\left\{\sum_{j=1}^{2n} t_je_j\colon
t_{k}-1\in(-\e,\e),  k-n\in \{j_1,\dots, j_\ell\}; t_{k}\in(-\e,\e),  k-n\not\in \{j_1,\dots, j_\ell\}
 \right\}.
\end{gather}
Note that $\tilde \om_\e^{j_1\dots j_\ell}$ and $ \om_\e^{j_1\dots j_\ell}$ are subset of $\om_\e$, and
   $\om_\e^{1\dots n}=
\{\sum_{j=1}^{2n} t_je_j\in\om_\epsilon\colon t\in[0,1)^n\times(-\e,\e)^n\}$.  Then
\begin{gather}
\Om_\e^{j_1\dots j_\ell}:=\Om_\epsilon\cap\bigcap_{k=1}^\ell T_{j_k}^{-1}\Om_\epsilon=\{(e^{2\pi i\zeta_1}, \dots e^{2\pi i\zeta_n})\colon\zeta\in \om^{j_1\dots j_\ell}
_\epsilon\},
\\
\tilde\Om_\e^{j_1\dots j_\ell}:=\Om_\epsilon\cap\bigcap_{k=1}^\ell T_{j_k}\Om_\epsilon=\{(e^{2\pi i\zeta_1}, \dots e^{2\pi i\zeta_n})\colon\zeta\in \tilde\om^{j_1\dots j_\ell}\}
\end{gather}
are connected non-empty Reinhardt domains. Moreover, $\Om_0^{1\cdots n}:=\cap_{\e>0} \Om_0^{1\cdots n}$ and $ \tilde\Om_0^{1\cdots n}=\cap_{\e>0}\Om_\e^{1\cdots n} $ are diffeomorphic to the real torus $(S^1)^n$.
We remark that
$
T_iT_j$ maps $\Om^{ij}_\e$ into $\Om_\e$ for $i\neq j$, while $T_i\circ T_i$ does not map $\Om_{\e,r}^{ii}$ into   $\Om_\e$ .


With $\Del_r=\{z\in\C\colon|z|<r\}$, we also define 
\beq\label{domains}
\om_{\epsilon,r}:= \omega_\epsilon\times  \Delta^d_r, 
\quad \Omega_{\epsilon,r}:=\Om 
_\epsilon\times  \Delta^d_r. 
\eeq
Throughout the paper, a mapping $(z',v')=\psi^0(z,v)$ from $\om_{\e,r}$ into $\C^{n+d}$ that commutes with $z_j\to z_j+1$ for $j=1,\dots, n$ will be identified with a well-defined mapping $(h',v')=\psi(h,v)$ from $\Om_{\e,r}$ into $\C^{n+d}$, where $z,h$ and $z',h'$ are related   as in \re{hTj}. A function on $\om_{\e,r}$ that has period $1$ in all $z_j$ is identified with a function on $\Om_{\e,r}$. We shall use these identifications as we wish.
\begin{prop}Let $C$ be the complex torus and $\pi_{\cyl}\colon\cyl=\C^n/{\Z^n}\to C$ be the covering.
Let $(M,C)$ be a neighborhood of $C$. Assume that $N_C$ is flat.
\bppp
\item Then one can take $\om_{\e_0,r_0}=\om_{\e_0}\times \Delta_{r_0}^d$ such that $(M,C)$ is biholomorphic to the quotient of $\om_{\e_0,r_0}$ by  $\tau^0_1,\dots,\tau^0_n$. Let $\tau_j$ be the mapping defined on $\Om_{\e_0,r_0}$ corresponding to $\tau_j^0$.
Then $\tau_1,\dots, \tau_n$ commute pairwise   wherever they are defined, i.e.
$$
\tau_i\tau_j(h,v)=\tau_j\tau_i(h,v)\quad  \forall  i\neq j
$$
for $(h,v)\in \Om_{\e_0, r_0}\cap \tau_i^{-1}\Om_{\e_0,r_0}\cap \tau_j^{-1}\Om_{\e_0,r_0}$.
\item
 Let $(\tilde M,C)$ be another  such neighborhood having the corresponding generators $\tilde\tau_1,\dots,\tilde\tau_n$  of deck transformations defined on $\Om_{\tilde\e_0,\tilde r_0}$. Then $(M,C)$ and $(\tilde M,C)$ are holomorphically equivalent if and only if there is a  biholomorphic mapping $F$  from $\Om_{\e,r}$ into $\Om_{\tilde \e,\tilde r}$ for some positive $\e,r,\tilde e,\tilde r$  such that
$$
F \tilde\tau_j(h,v)=\tau_jF(h,v),\ j=1,\ldots, n,
$$
wherever   both sides are defined, i.e. $(h,v)\in \Om_{\tilde\e,\tilde r}\cap \tilde\tau_j^{-1}\Om_{\e,r}\cap \Om_{\e,r}\cap F^{-1}\Om_{\e,r}.$
\eppp
\end{prop}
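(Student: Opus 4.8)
The plan is to obtain both parts by specializing the general framework of \rla{deck-tran} and \rla{CinN} to the covering $\pi_{\cyl}\colon\cyl\to C$, which is available because $\cyl$ is Stein and, by \rpa{pi_SEtrivial}, the pulled-back bundle $\widetilde{N_C}=\pi_{\cyl}^*N_C$ is holomorphically trivial, $\widetilde{N_C}\cong\cyl\times\C^d$. For part (i): by \rla{deck-tran}, $\pi_{\cyl}$ extends (after shrinking $M$) to a covering $\cL M\to M$ with $\cyl\subset\cL M$; since $\cyl$ is Stein, \rla{CinN} — whose proof uses Siu's theorem \cite{siu-stein} and which we apply with $\om^*_0=\cyl\times\C^d$ — produces a biholomorphism, fixing $\cyl$ pointwise, from a neighborhood of $\cyl$ in $\cL M$ onto a neighborhood of the zero section in $\cyl\times\C^d$, carrying $(M,C)$ to the quotient of that neighborhood by the transported deck transformations of $\cL M\to M$. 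Lifting the $\C^n/\Z^n$-picture to $\C^n$ gives $1$-periodic coordinates $(z,v)$, equivalently $(h,v)$ with $h_j=e^{2\pi i z_j}$; since $\ov{\om_0}$ is a compact fundamental domain for $\Lambda$, we may shrink this neighborhood to one of the form $\om_{\e_0,r_0}=\om_{\e_0}\times\Delta_{r_0}^d$ whose image still covers $C$, and the transported deck transformations become $\tau^0_1,\dots,\tau^0_n$ on $\om_{\e_0,r_0}$, or $\tau_1,\dots,\tau_n$ on $\Om_{\e_0,r_0}$ after the identification fixed just before the Proposition.

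Pairwise commutativity is then formal: the deck group of $\cL M\to M$ is a quotient of $\pi_1(M)\cong\pi_1(C)\cong\Z^{2n}$, hence abelian, so its generators act as pairwise commuting biholomorphisms of the connected total space $\cL M$; each $\tau_i$ is the restriction of one such global deck transformation to $\Om_{\e_0,r_0}$, only partially defined since a global deck transformation need not carry $\Om_{\e_0,r_0}$ into itself. Hence on $\Om_{\e_0,r_0}\cap\tau_i^{-1}\Om_{\e_0,r_0}\cap\tau_j^{-1}\Om_{\e_0,r_0}$ both $\tau_i\tau_j$ and $\tau_j\tau_i$ equal the single-valued, globally defined composite of the two commuting deck transformations, which is the claimed identity.

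Part (ii) is \rla{deck-tran} read through these coordinates. If $(M,C)$ is holomorphically equivalent to $(\tilde M,C)$, then by \rla{deck-tran} the equivalence lifts to a covering biholomorphism between neighborhoods of $\cyl$ in $\cL M$ and in $\cL{\tilde M}$ fixing $\cyl$ pointwise; a biholomorphism fixing $C$ induces the identity on $\pi_1(C)$, so this lift conjugates the deck generators of one covering to those of the other, and restricting its inverse to a sufficiently small domain $\Om_{\tilde\e,\tilde r}$ with image in $\Om_{\e,r}$ yields a biholomorphic $F$ with $F\tilde\tau_j=\tau_jF$ wherever both sides make sense. Conversely, a biholomorphic $F$ with $F\tilde\tau_j=\tau_jF$ is equivariant for the two $\Z^n$-actions by deck transformations, hence descends to a biholomorphism between the corresponding quotients, which by part (i) are neighborhoods of $C$ in $\tilde M$ and in $M$; this is the asserted holomorphic equivalence — one checks, using that the $\tau_j$ and $\tilde\tau_j$ restrict on $\{v=0\}$ to the deck transformations of $\pi_{\cyl}\colon\cyl\to C$ and so preserve the zero section, that the descended map respects $C$.

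The only genuinely delicate point is the bookkeeping of domains: $\tau_j$, $\tilde\tau_j$, and $F$ are all only partially defined, so in every composition and in the conjugation relation one must track exactly where each map and each composite is defined — this is precisely what the explicit intersections in the statement record. In the converse direction one also needs $\e,r$ small enough that $F$ carries the gluing overlaps of a fundamental domain back into its target, so that the quotient is a genuine complex manifold on which $F$ descends; but since $\om_\e$ already contains a full fundamental domain for the $\tau_j$-action — its last $n$ real parameters sweeping out $(-\e,1+\e)\supset[0,1]$ — this costs only a shrinking of $r$, a fixed $\e>0$ being enough.
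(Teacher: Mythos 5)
Your argument is correct and follows essentially the same route as the paper, whose own proof is just a two-sentence pointer: apply \rl{CinN} with $C^*=\cyl$ (Stein) and invoke \rp{pi_SEtrivial} to identify $N_{\cyl}(\tilde M)$ with the trivial bundle $\cyl\times\C^d$, with commutativity and part (ii) left implicit via \rl{deck-tran} and the abelianness of $\pi_1(C)$. You have simply filled in that outline with the intended ingredients, including the domain bookkeeping the paper omits.
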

\begin{proof}
We now apply \rl{CinN}, in which $C^*$ is replaced by
  $\cyl=\R^n/\Z^n+i\R^n$ is a Stein manifold.
  Assume that $N_C$ is flat. Then according to \rp{pi_SEtrivial},  $N_{\cyl}( \widetilde{N_C)}=N_{\cyl}(\tilde M)=\pi_{\cyl}^*(N_C)$  is the trivial vector bundle  $\cyl\times \C^d$ with coordinates $(h,v)$, while $\cyl\times\{0\}$ is defined by $v=0$.
\end{proof}

Set
\gan
{\mathcal P}_\epsilon^+:=\left\{ \sum_{i=1}^n t_i\IM\tau_i\colon t\in (-\epsilon,1+\epsilon)^{n}\right\},\\
{\Om 
}_\epsilon^+:=\left\{(e^{-2\pi R_1},\dots, e^{-2\pi R_n})\colon R
\in {\mathcal P}_\epsilon^+
\right\}.
\end{gather*}
Note that  ${\mathcal P}_\epsilon^+, {\mathcal P}_0^+$ are $n$-dimensional parallelotopes,
 and $\Om_\e^+$ contains  $(1,\ldots,1)$, the image of $0\in{\mathcal P}_\epsilon^+$, corresponding to the real torus $(S^1)^n$.

 Since $\Om 
 _\epsilon$ is   Reinhardt, we have
 \eq{boundary-of-R}
 \Om 
 _\epsilon\supset\Om 
 _\epsilon^+,\quad (\pd \Om 
 _\e)^+=\pd(\Om 
 _e^+), \quad (\pd \Om 
 _\e)^+:=\{(|h_1|,\dots,|h_n|)\colon h\in \pd \Om 
 _\e\}.
 \eeq
 We now apply the above general results to the case where $C$ is a complex torus,  $C^*=\tilde C$ and $N_C(M)$ is Hermitian flat.

 As in \cite{ilyashenko-pyartly-embed}, the deck transformations of $(\tilde N_C,\cyl)$ are   generated by $n$ biholomorphisms $\hat\tau_1,\dots, \hat\tau_n$ that preserve $\cyl$.\begin{gather}
 	\hat \tau_j(
 	h,v)
 	=
 	(T_jh,  M_jv),\quad M_j:=\diag(\mu_{j,1},\dots, \mu_{j,d})\label{tauhat}
 \end{gather}
 with $h,T_j$ being defined by~:
  \eq{hTj} h=(e^{2\pi i z_1}, \cdots, e^{2\pi iz_n}), \quad
 T_j:= \diag(\la_{j,1},\dots, \la_{j,n}),\quad \la_{j,k}:=e^{2\pi i\tau_{jk}}.
 \eeq
Here the invertible $(d\times d)$-matrix $M_j$ is the factor of automorphy $\rho(e_{n+j})$ of $\pi_{\cyl}^*N_C$.

Recall that each deck transformation
 $\tau^0_j(z,v)$, $j=1,\ldots, n$, is a holomorphic map defined on $\overline \om_{\epsilon,r}$.
   In coordinates $(h,v)$, $\tau_j^0$ becomes $\tau_j$ defined on $\overline\Om_{\e,r}$.
      Since $T_CM$ splits, it is a higher-order perturbation of 
      $\hat\tau_j$ 
      with $$
      \tau_j(h,0)=( T_jh,0)
      $$


 The above computation is based on the assumption that $N_C$ is flat. We now assume that $N_C$ is {\it Hermitian and flat}, i.e. the $N_C$ admits locally constant Hermitian transition matrices.
Then all $\rho(e_{n+j})$ are Hermitian and constant matrices. Since they pairwise commute, they  are simultaneously diagonalizable.
Hence, we can assume that $M_j=\diag(\mu_{j,1},\ldots, \mu_{j,d})$.
   We  recall from \re{hTj} that $T_j=\diag(\la_{j,1},\dots, \la_{j,n})$ are already diagonal.

%
%
%
\begin{defn}\label{def-nr}
	The 
	normal bundle $N_C$ is said to be 
	non-resonant if, for each $(Q,P)\in \mathbb{N}^d\times \Z^n$ with $|Q|>1$,  each $i=1,\ldots, n$, and each $j=1,\ldots ,d$, there exist $i_h:=i_h(Q,P,i)$  and 
 $i_v:=i_v(Q,P,j)$ that are in $ \{1,\ldots, n\}$ such that
	$$
	\la_{i_h}^P 
\mu_{i_h}^Q-\la_{i_h,i 
}\neq 0\quad\text{and} 
\quad 
\la_{i_v}^P 
\mu_{i_v}^Q-\mu_{{i_v},j}\neq 0.
$$
\end{defn}

\begin{defn}\label{dioph}
The pullback 
normal bundle $N_C$, or $N_C$,  is said to be  {Diophantine} if for all $(Q,P)\in \mathbb{N}^d\times \Z^n$,   $|Q|>1$ and all $i=1,\ldots, n$, and  $j=1,\ldots ,d$,
\begin{gather}
\max_{\ell\in \{1,\ldots, n\}}\left |
\la_{\ell}^P 
\mu_{\ell}^Q
-\la_{\ell,i} 
\right|   >  \frac{D}{(|P|+|Q|)^{\tau}},\label{dh}\\
\max_{\ell\in \{1,\ldots, n\}}  \left |\la_\ell^P 
\mu_{\ell}^Q-\mu_{{\ell},j}\right |   >  \frac{D}{(|P|+|Q|)^{\tau}}.\label{dv}
\end{gather}
 We shall choose $i_h$ (resp. $i_v$) to be the index that realizes the maximum of \rea{dh} (resp. \rea{dv}).
\end{defn}
\begin{remark}
If  the right-hand sides are replaced by $0$, 
 then  
$N_C$ is non-resonant.
\end{remark}

\begin{prop}
	The properties of being non-resonant and Diophantine is a property of the (abelian) group and not the choice of the generators.
\end{prop}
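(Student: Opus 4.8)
The plan is to reduce the statement to an elementary fact about the rank-$n$ free abelian deck group $G$ of the covering $\widetilde{N_C}\to N_C$. A change of generators of $G$ amounts to a matrix $A=(a_{\ell j})\in GL(n,\Z)$ (since $G\cong\Z^n$ is free of rank $n$, any $n$ generators form a basis), so the new generators are $\hat\tau'_\ell=\hat\tau_1^{a_{\ell 1}}\cdots\hat\tau_n^{a_{\ell n}}$. Since each $\hat\tau_j$ acts linearly and diagonally in the coordinates $(h,v)$, by $(h,v)\mapsto(T_jh,M_jv)$ with $T_j=\diag(\la_{j,1},\dots,\la_{j,n})$ and $M_j=\diag(\mu_{j,1},\dots,\mu_{j,d})$ pairwise commuting, every element of $G$ is diagonal, and the new generators act by $\hat\tau'_\ell(h,v)=(T'_\ell h,M'_\ell v)$ with $\la'_{\ell,k}=\prod_j\la_{j,k}^{a_{\ell j}}$, $\mu'_{\ell,m}=\prod_j\mu_{j,m}^{a_{\ell j}}$. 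Consequently, for every $(P,Q)\in\Z^n\times\mathbb{N}^d$ one checks directly that $(\la'_\ell)^{P}(\mu'_\ell)^{Q}=\prod_{j=1}^n\bigl(\la_j^P\mu_j^Q\bigr)^{a_{\ell j}}$, while $\la'_{\ell,i}=\prod_j\la_{j,i}^{a_{\ell j}}$ and $\mu'_{\ell,m}=\prod_j\mu_{j,m}^{a_{\ell j}}$. Thus all quantities entering Definitions~\ref{def-nr} and~\ref{dioph} for the new generators are expressed through the old data and the fixed integer matrix $A$; note also that the ``targets'' $\la_{\ell,i},\mu_{\ell,j}$ are the eigenvalues of $\hat\tau_\ell$ on the (canonically given) coordinate directions, so the set of weight-data is intrinsic once the splitting and trivialization are fixed.

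For non-resonance, observe that for a fixed $(P,Q)$ the assignments sending a group element to its eigenvalue on the monomial $h^Pv^Q$, on the coordinate $h_i$, and on the coordinate $v_j$ are group homomorphisms $\chi_{P,Q},\chi^h_i,\chi^v_j\colon G\to\C^*$ (the $G$-action being diagonal in $(h,v)$). The resonance ``$\la_\ell^P\mu_\ell^Q=\la_{\ell,i}$ for all $\ell=1,\dots,n$'' says precisely that $\chi_{P,Q}$ and $\chi^h_i$ agree on the generating set $\{\hat\tau_\ell\}$; since two homomorphisms agreeing on a generating set agree on the whole group, this is equivalent to $\chi_{P,Q}=\chi^h_i$ on all of $G$, a condition that makes no reference to a choice of generators (and likewise for the vertical resonances $\chi_{P,Q}=\chi^v_j$). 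Hence its negation — non-resonance — is a property of $G$.

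For the Diophantine condition the extra difficulty is the maximum over the generators. Put $m'_1:=\min_\ell|\la'_{\ell,i}|>0$ and $\Lambda'_1:=\max_\ell|\la'_{\ell,i}|$. The key observation is that weights with a ``large'' eigenvalue $\la_\ell^P\mu_\ell^Q$ are harmless: if for some $\ell$ the modulus $|(\la'_\ell)^{P}(\mu'_\ell)^{Q}|$ lies outside the fixed interval $[\tfrac12 m'_1,\,2\Lambda'_1]$, then $|(\la'_\ell)^{P}(\mu'_\ell)^{Q}-\la'_{\ell,i}|>\tfrac12 m'_1$, and \re{dh} holds for $\{\hat\tau'_\ell\}$ with no loss. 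In the remaining case all $|(\la'_\ell)^{P}(\mu'_\ell)^{Q}|$ lie in that interval, hence — applying $A^{-1}\in GL(n,\Z)$ — all $|\la_j^P\mu_j^Q|$ lie in a fixed interval $[\rho,R]$ depending only on $A,m'_1,\Lambda'_1$. Telescoping
$$
\la_j^P\mu_j^Q-\la_{j,i}=\prod_{\ell}\bigl((\la'_\ell)^{P}(\mu'_\ell)^{Q}\bigr)^{b_{j\ell}}-\prod_{\ell}(\la'_{\ell,i})^{b_{j\ell}},\qquad b=A^{-1},
$$
in which every base is now bounded above and away from $0$, yields $\max_j|\la_j^P\mu_j^Q-\la_{j,i}|\le c_1\,\max_\ell|(\la'_\ell)^{P}(\mu'_\ell)^{Q}-\la'_{\ell,i}|$ for a constant $c_1=c_1(A,m'_1,\Lambda'_1)$ independent of $(P,Q)$. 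Combined with the Diophantine inequality \re{dh} for $\{\hat\tau_j\}$, this gives \re{dh} for $\{\hat\tau'_\ell\}$ with the same exponent $\tau$ and a new constant (of order $D/c_1$). Running the same argument for \re{dv} — with $m'_1,\Lambda'_1$ formed from the $\mu'_{\ell,j}$ — finishes the proof.

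I expect the main obstacle to be exactly that last step: recognizing that only the regime in which all eigenvalues $|\la_j^P\mu_j^Q|$ stay bounded, \emph{above and below}, requires a quantitative estimate, and organizing the telescoping there so the change-of-generator constants do not depend on $(P,Q)$ — a naive telescoping alone carries constants growing exponentially in $|P|+|Q|$ (through factors $|\la_j^P\mu_j^Q|^{\pm1}$) and is useless. The non-resonance half and the bookkeeping on the linear parts are routine.
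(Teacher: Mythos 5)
Your proposal is correct and follows essentially the same route as the paper: both reduce the statement to the multiplicative relation $\tilde\la_\ell^P\tilde\mu_\ell^Q\tilde\la_{\ell,i}^{-1}=\prod_k\bigl(\la_k^P\mu_k^Q\la_{k,i}^{-1}\bigr)^{a_{\ell,k}}$ induced by a change of generators $A\in GL_n(\Z)$, dispose of non-resonance immediately (your character-homomorphism phrasing is a clean repackaging of the same fact), and prove the Diophantine bound by splitting into the regime where some quantity is uniformly far from its resonance value versus the regime where all are close, where a comparison with constants independent of $(P,Q)$ is required. The only difference is technical: where the paper passes to logarithms and uses $|\ln z|\asymp|z-1|$ near $z=1$ together with $\|A\|,\|A^{-1}\|$, you telescope the products directly; both yield the same conclusion.
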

\begin{proof}
Recall that $$
	\lambda_{\ell}=(\lambda_{\ell,1},\ldots,\lambda_{\ell,n})=(e^{2\pi i\tau_{\ell,1}},\ldots, e^{2\pi i\tau_{\ell,n}}).
	$$
Let $G$ be the group generated by the $\hat \tau_{\ell}$'s. Then, $\{\tilde \tau_{\ell}\}_{\ell}$ defines another set of generators of $G$ if $\tilde \tau_{\ell}=\hat\tau_{1}^{a_{\ell,1}}\cdots \hat\tau_{n}^{a_{\ell,n}}$, $\ell=1,\ldots, n$ where $A=(a_{i,j})_{1\leq i,j\leq n}\in GL_n(\Z)$ with $\det A=\pm1$. Then, the eigenvalues of $\tilde \tau_{\ell}$ are
$$
\tilde \lambda_{\ell,i}=\prod_{k=1}^n \lambda_{k,i}^{a_{\ell,k}},\quad \tilde \mu_{\ell,j}=\prod_{k=1}^{ d} \mu_{k,j}^{a_{\ell,k}}.
$$
Hence, we have
$$
\tilde \lambda_{\ell}^P\tilde\mu_{\ell}^Q\tilde \lambda_{\ell,i}^{-1}= (\lambda_{1}^P\mu_1^Q\lambda_{1,i}^{-1})^{a_{\ell,1}}\cdots (\lambda_{n}^P\mu_n^Q\lambda_{n,i}^{-1})^{a_{\ell,n}}.
$$
Fix $P,Q$ and $i$. Taking the logarithm, we have as $n$-vectors
\beq\label{log}
\left(\ln \tilde \lambda_{\ell}^P\tilde\mu_{\ell}^Q\tilde \lambda_{\ell,i}^{-1}\right)_{\ell=1,\dots, n}= A \left(\ln \lambda_{\ell}^P\mu_{\ell}^Q\lambda_{\ell,i}^{-1}\right)_{\ell=1,\dots, n},\mod 2\pi i.
\eeq
Since $A,A^{-1}
\in GL_n(\Z)$, given $P, Q$ and $i$, $$\left(\ln \tilde \lambda_{\ell}^P\tilde\mu_{\ell}^Q\tilde \lambda_{\ell,i}^{-1}\right)_{\ell=1,\dots, n}=0\mod 2\pi i$$
 iff $\left(\ln \lambda_{\ell}^P\mu_{\ell}^Q\lambda_{\ell,i}^{-1}\right)_{\ell=1,\dots, n}=0\mod 2\pi i$. Similarly, by considering $\ln \tilde\lambda_{\ell}^P\tilde\mu_{\ell}^Q\tilde \mu_{\ell,i}^{-1}$, we obtain that the non-resonant condition does not depend on the choice of generators.
Given $P,Q,i$,  if one of the $\lambda_{\ell}^P\mu_{\ell}^Q\lambda_{\ell,i}^{-1}$'s is not close to $1$, then $\left\|(\ln \tilde \lambda_{\ell}^P\tilde \mu_{\ell}^Q\tilde\lambda_{\ell,i}^{-1})_{\ell}\right\|$ is bounded way from $0$. On the other hand, if all $\lambda_{\ell}^P\mu_{\ell}^Q\lambda_{\ell,i}^{-1}$'s are close to $1$, then
$|\ln \lambda_{\ell}^P\mu_{\ell}^Q\lambda_{\ell,i}^{-1}|$ 
(with $\IM\ln$ in $(-\pi,\pi]$) is comparable to
$|\lambda_{\ell}^P\mu_{\ell}^Q\lambda_{\ell,i}^{-1}-1|$.
Furthermore, taking the module of \re{log}, we obtain
$$
\|A^{-1}\|^{-1}\left\|(\ln \lambda_{\ell}^P\mu_{\ell}^Q\lambda_{\ell,i}^{-1})_{\ell}\right\|\leq \left\|(\ln \tilde \lambda_{\ell}^P\tilde \mu_{\ell}^Q\tilde\lambda_{\ell,i}^{-1})_{\ell}\right\|\leq \|A\|\left\|(\ln \lambda_{\ell}^P\mu_{\ell}^Q\lambda_{\ell,i}^{-1})_{\ell}\right\|,
$$
where $\|(a_{\ell})_{\ell}\|=\max_{\ell}|a_{\ell}|$.
If the latter is bounded  below by $\frac{C}{(|P|+|Q|)^{\tau}}$, so is  $\left\|(\ln \tilde \lambda_{\ell}^P\tilde \mu_{\ell}^Q\tilde\lambda_{\ell,i}^{-1})_{\ell}\right\|$.
\end{proof}
\begin{thm}
	Let $C_n$ be an $n$-dimensional complex torus, holomorphically embedded into a complex manifold $M_{n+d}$.   Assume that $T_CM$ splits. Assume the normal bundle $N_C$ has (locally constant) Hermitian transition functions. Assume that  $N_C$ is   Diophantine. Then some neighborhood of $C$ is biholomorphic to a neighborhood of the zero section in the normal bundle.
\end{thm}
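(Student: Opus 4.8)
The plan is to reduce the theorem, via \rl{deck-tran} and \rl{CinN} together with Siu's theorem (using that $\cyl$ is Stein), to the problem of simultaneously linearizing the commuting deck transformations $\tau_1,\dots,\tau_n$ defined near the real torus $\tilde\Om_0^{1\cdots n}\subset\Om_{\e_0,r_0}$. By the structural results just established, each $\tau_j$ has the form $\tau_j(h,v)=(T_jh+\cdots, M_jv+\cdots)$ where the dots are higher order in $v$, with $T_j=\diag(\la_{j,1},\dots,\la_{j,n})$ and $M_j=\diag(\mu_{j,1},\dots,\mu_{j,d})$ (using Hermitian flatness to diagonalize the $M_j$), and the $\{\tau_j\}$ commute pairwise on the overlap domains. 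The goal is a biholomorphism $\Phi$ defined on some $\Om_{\e_\infty,r_\infty}$ conjugating $\{\tau_j\}$ to $\{\hat\tau_j\}$; by the converse direction of the preliminary lemmas this yields the biholomorphism $(M,C)\cong(N_C,\text{zero section})$.

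The core is a KAM / Newton rapid iteration scheme. At step $k$ one has $\{\tau_j^{(k)}\}$ on a domain $D^{(k)}=\Om_{\e_k,r_k}$ with error $\del_k$ measuring the deviation from $\{\hat\tau_j\}$ in a weighted sup-norm adapted to the Reinhardt geometry (norms on functions with the right period in the $z_j$'s, controlled on $\Om_{\e_k}\times\Del_{r_k}^d$). Writing $\tau_j^{(k)}=\hat\tau_j+f_j^{(k)}$ with $f_j^{(k)}$ small and vanishing to order $\ge 2$ in $v$, one seeks $\Phi^{(k)}=\I+\phi^{(k)}$ solving the linearized equation: $\Phi^{(k)}\circ\hat\tau_j - \hat\tau_j\circ\Phi^{(k)} = -f_j^{(k)}$ to first order, for all $j$ simultaneously. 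Expanding $\phi^{(k)}$ and $f_j^{(k)}$ in Fourier series in $h$ (i.e. in the integer lattice $\Z^n$, the variable $P$) and Taylor series in $v$ (multi-index $Q$), the equation decouples coefficient-by-coefficient into scalar equations whose denominators are exactly $\la_\ell^P\mu_\ell^Q-\la_{\ell,i}$ (horizontal components) and $\la_\ell^P\mu_\ell^Q-\mu_{\ell,j}$ (vertical components). Here the non-resonance condition (Definition~\ref{def-nr}) guarantees solvability — for each mode one picks the index $\ell=i_h$ or $\ell=i_v$ realizing a nonzero denominator and solves using the corresponding $j$-th equation — and the Diophantine condition (Definition~\ref{dioph}) gives the quantitative lower bound $D(|P|+|Q|)^{-\tau}$ needed to estimate the solution with only a fixed power loss in the domain size. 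One must also use commutativity crucially: the compatibility of the $n$ linearized equations (so that a single $\phi^{(k)}$ solves all of them) follows from the cocycle-type relations inherited from pairwise commutation of the $\tau_j^{(k)}$, much as in the Il'yashenko--Pyartli scheme. After solving, conjugating by $\Phi^{(k)}$ produces $\{\tau_j^{(k+1)}\}$ with quadratically smaller error $\del_{k+1}\lesssim (\text{const})^k\,\del_k^2$ on the shrunken domain $D^{(k+1)}$, and a standard choice of geometrically-decreasing domain losses $\e_k-\e_{k+1}$, $r_k-r_{k+1}$ makes $\sum\del_k$ converge and $\Phi=\lim\Phi^{(k)}\circ\cdots\circ\Phi^{(1)}$ a biholomorphism on $D^{(\infty)}$.

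I expect the main obstacle to be the bookkeeping of the small-divisor estimates in the nonstandard Reinhardt geometry together with the "mixed" Fourier--Taylor expansion: one loses domain in the $h$-variable (Fourier) and in the $v$-variable (Taylor) simultaneously, and the weights must be chosen so that the single power-of-$(|P|+|Q|)$ loss from the Diophantine condition is absorbed by an arbitrarily small shrinkage of $(\e_k,r_k)$, uniformly over all the overlap domains $\Om_\e^{j_1\cdots j_\ell}$, $\tilde\Om_\e^{j_1\cdots j_\ell}$ where the various compositions are defined. A secondary difficulty is verifying that the solution $\phi^{(k)}$ of the linearized system genuinely respects the "higher-order in $v$" structure and the periodicity/Reinhardt constraints, so that $\Phi^{(k)}$ descends correctly and the iteration stays within the class of maps conjugating toward $\{\hat\tau_j\}$; this is where the non-resonance indexing $i_h(Q,P,i)$, $i_v(Q,P,j)$ and the diagonal form of $T_j,M_j$ do the essential work. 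Granting these estimates, the convergence argument is the classical quadratic-convergence scheme, and \rt{thm1.1} follows by combining it with \rl{deck-tran} and \rl{CinN}.
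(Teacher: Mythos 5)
Your proposal follows essentially the same route as the paper: the reduction to simultaneous linearization of the commuting deck transformations via \rl{deck-tran}, \rl{CinN} and Siu's theorem, the linearized cohomological equations with small divisors $\la_\ell^P\mu_\ell^Q-\la_{\ell,i}$ and $\la_\ell^P\mu_\ell^Q-\mu_{\ell,j}$ solved mode-by-mode using the non-resonance indices $i_h,i_v$ and the Diophantine lower bound, compatibility of the $n$ equations coming from pairwise commutation, and a Newton iteration with shrinking Reinhardt domains. The only (cosmetic) difference is in the convergence bookkeeping: the paper obtains the rapid gain not by a literal $\del_{k+1}\lesssim\del_k^2$ estimate but by truncating to the $2q_k$-jet in $v$, so that the new error vanishes to order $2q_k+1$ in $v$, and then extracting smallness from the Schwarz lemma on the shrunken polydisc $\Del_{r_ke^{-5\del_k}}^d$.
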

\begin{remark}
	When $C$ is a product of $1$-dimensional tori with normal bundle which is a direct sum of line bundles, the above result is due to Il'yashenko-Pyartli~\cite{ilyashenko-pyartly-embed}.
\end{remark}

We have
$\tau_j(h,v) =\hat\tau_j(h,v)+(\tau^h_j(h,v),\tau^v_j(h,v) )$. Here, functions
$$
\tau^{\bullet}_j(h,v)=\sum_{Q\in \mathbb{N}^d, |Q|\geq 2}\tau^{\bullet}_{j,Q}(h)v^Q
$$
are holomorphic in $(h,v)$ in a neighborhood of $\Om_{\epsilon,r}$ with values in $\C^{n+d}$.

\begin{defn}
	
Set
 $\Om_{\epsilon,r}:=\Om_\epsilon\times\Del_r^d$,
$
\tilde \Omega_{\epsilon,r}:=\overline\Omega_{\epsilon,r}\cup\bigcup_{i=1}^n \hat\tau_i(\overline\Omega_{\epsilon,r})$.
Denote by $\cL A_{\epsilon,r}$ $($resp. $\tilde{\cL A}_{\epsilon, r})$ the set of holomorphic functions on $\overline{\Omega_{\epsilon,r}}$ $($resp. $\overline{\tilde \Omega_{\epsilon,r}})$.
If $f\in \mathcal{A}_{\epsilon, r}$ $($resp. $\tilde f\in\tilde{\cL A}_{\epsilon, r})$, we set
$$
\|f\|_{\epsilon,r}:=\sup_{(h,v)\in\Omega_{\epsilon,r}}|  f(h,v)|, \quad |||\tilde f|||_{\epsilon,r}:=\sup_{(h,v)\in\tilde\Omega_{\epsilon,r}}| \tilde f(h,v)|.
$$
\end{defn}

As such, each
  $f\in \mathcal{A}_{\epsilon, r}$
can be expressed as a convergent Taylor-Laurent series
$$
f(h,v)= \sum_{P\in \Z^n}f_{Q,P}h^Pv^Q
$$
for $(h,v)\in \Om_{\epsilon,r}=\Om_\epsilon\times\Del_r^d$.
Recall that each holomorphic function on $E_j$ in \rl{connected} admits a unique Taylor-Laurent series expansion on $\tilde\Om_{\e'}^{1\cdots n}\times \Del_{\e'}^d$ when $\e'>0$ is sufficiently small.

We can state the following, for later use~:
\begin{lemma}\label{connected}
	For  $i\neq j$, the set $
\hat\tau_j\overline\Omega_{\epsilon,r}\cap\hat\tau_i
(\overline\Omega_{\epsilon,r})$ is a connected Reinhardt domain containing  $\tilde
\Om_{\e}^{1\dots n}\times \Del_{r'}^d$ in $\C^{n+d}$ when $r'>0$ is sufficiently small.
\end{lemma}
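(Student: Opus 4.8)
The plan is to exploit that each $\hat\tau_\ell$ is the \emph{linear} diagonal automorphism $(h,v)\mapsto(T_\ell h,\,M_\ell v)$ of $\C^{n+d}$, so the whole problem factors into a ``horizontal'' and a ``vertical'' part. First I would note that $\overline\Om_{\epsilon,r}=\overline\Om_\epsilon\times\overline{\Del_r^d}$ and that $\hat\tau_\ell$, being a linear isomorphism, sends this onto $(T_\ell\overline\Om_\epsilon)\times(M_\ell\overline{\Del_r^d})$; hence for $i\ne j$
$$
\hat\tau_j\overline\Om_{\epsilon,r}\cap\hat\tau_i\overline\Om_{\epsilon,r}=\bigl(T_j\overline\Om_\epsilon\cap T_i\overline\Om_\epsilon\bigr)\times\bigl(M_j\overline{\Del_r^d}\cap M_i\overline{\Del_r^d}\bigr).
$$
Since a product of connected Reinhardt sets is again a connected Reinhardt set, it suffices to treat the two factors separately.

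The vertical factor is immediate: $M_\ell=\diag(\mu_{\ell,1},\dots,\mu_{\ell,d})$ is invertible, so $M_\ell\overline{\Del_r^d}=\prod_{k=1}^d\overline{\Del_{r|\mu_{\ell,k}|}}$ is a closed polydisc and $M_j\overline{\Del_r^d}\cap M_i\overline{\Del_r^d}=\prod_{k=1}^d\overline{\Del_{\,r\min(|\mu_{i,k}|,|\mu_{j,k}|)}}$ is again a closed polydisc, hence a connected Reinhardt set, and it contains $\overline{\Del_{r'}^d}$ whenever $0<r'\le r\min_{\ell\le n,\,k\le d}|\mu_{\ell,k}|$. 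For the horizontal factor I would pass to the logarithmic coordinates $R_m:=-\tfrac1{2\pi}\log|h_m|$ on $(\C^*)^n$, which is where $\Om_\epsilon$ and all its $T_\ell$-translates live since the $\hat\tau_\ell$ preserve $\cyl$. By the definitions of $\Om_\epsilon$, $\Om_\epsilon^+$, $\mathcal P_\epsilon^+$, the domain $\Om_\epsilon$ is exactly the Reinhardt set lying over the open parallelotope $\mathcal P_\epsilon^+=\{\sum_{i=1}^n t_i\IM\tau_i:t\in(-\epsilon,1+\epsilon)^n\}$ in $R$-space, and $T_\ell$ acts there by the translation $R\mapsto R+\IM\tau_\ell$, where $\IM\tau_\ell$ is the $\ell$-th row of the invertible matrix $\IM\tau$, so that $\IM\tau_1,\dots,\IM\tau_n$ is a basis of $\R^n$. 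Writing points of $R$-space in this basis, $T_\ell\Om_\epsilon$ lies over the box $\{t:t_\ell\in(1-\epsilon,2+\epsilon),\ t_k\in(-\epsilon,1+\epsilon)\ (k\ne\ell)\}$, so $T_j\Om_\epsilon\cap T_i\Om_\epsilon$ lies over the nonempty open box $\{t:t_i,t_j\in(1-\epsilon,1+\epsilon),\ t_k\in(-\epsilon,1+\epsilon)\ (k\ne i,j)\}$, which is convex and hence connected; and a Reinhardt subset of $(\C^*)^n$ is connected as soon as its image in $R$-space is, via $(\C^*)^n\cong\R^n\times(S^1)^n$. Thus $T_j\Om_\epsilon\cap T_i\Om_\epsilon$ is a connected Reinhardt domain, and passing to closures shows $T_j\overline\Om_\epsilon\cap T_i\overline\Om_\epsilon$ is the corresponding closed connected Reinhardt set; combining the factors, $\hat\tau_j\overline\Om_{\epsilon,r}\cap\hat\tau_i\overline\Om_{\epsilon,r}$ is a connected Reinhardt set with nonempty interior, as claimed.

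For the stated containment I would read off from the same picture that, by definition, $\tilde\Om_\epsilon^{1\cdots n}=\Om_\epsilon\cap\bigcap_{k=1}^n T_k\Om_\epsilon\subseteq T_i\Om_\epsilon\cap T_j\Om_\epsilon\subseteq T_i\overline\Om_\epsilon\cap T_j\overline\Om_\epsilon$ (in $R$-coordinates it is the even smaller box $t_k\in(1-\epsilon,1+\epsilon)$ for all $k$), while $\Del_{r'}^d$ sits inside the vertical factor by the choice of $r'$; hence $\tilde\Om_\epsilon^{1\cdots n}\times\Del_{r'}^d$ lies in the intersection. The one step needing care — rather than being genuinely hard — is the translation of the definitions of $\Om_\epsilon$, $\Om_\epsilon^+$ and $\mathcal P_\epsilon^+$ into the two facts ``$\Om_\epsilon$ is the Reinhardt set over a parallelotope'' and ``$T_\ell$ acts by translation on logarithmic coordinates''; once those are in hand, everything reduces to the convexity of a box, and there is no real obstacle.
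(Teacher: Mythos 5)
Your proof is correct. The paper states Lemma~\ref{connected} without proof, and your argument---splitting $\hat\tau_j\overline\Omega_{\epsilon,r}\cap\hat\tau_i\overline\Omega_{\epsilon,r}$ into the product of a horizontal and a vertical intersection, reducing the horizontal factor via logarithmic coordinates to the intersection of two translates of the parallelotope ${\mathcal P}_\epsilon^+$ (a convex box in the basis $\IM\tau_1,\dots,\IM\tau_n$), and noting that a Reinhardt set over a connected base is connected---is exactly the computation the paper's definitions of $\Om_\epsilon$, $\Om_\epsilon^+$ and $\tilde\Om_\epsilon^{1\cdots n}$ are set up to make immediate.
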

\subsection{Holomorphic functions on $\Omega_{\epsilon,r}$}
In this section, we study elementary properties and estimate  holomorphic functions   $f$ on $\overline\Omega_{\epsilon,r}$.
\begin{lemma}\label{laurent}
	An element  $f(h)$ of ${\mathcal A}_{\epsilon}$, that is a holomorphic function in a neighborhood of $\ov \Om_{\epsilon}$,  admits a Laurent series expansion in $h$
	\eq{L-exp}
	  f(h)=\sum_{P\in\Z^n} c_Ph^P.
	\eeq
	The series converges normally on $\Om_\epsilon$. Moreover, the Laurent coefficients
	\eq{L-coef}
	c_P=\f{1}{(2\pi i)^n}\int_{|\zeta_1|=s_1,\dots, |\zeta_n|=s_n}   f(\zeta)\zeta^{-P-(1,\dots, 1)}\, d\zeta_1\wedge\cdots \wedge d\zeta_n
	\eeq
	are independent of $s\in \Om_\epsilon^+$ and
	\eq{L-est}
	|c_P|\leq\sup_{\Om_\epsilon}| f|\inf_{s\in \Om_\epsilon^+} s^{-P}.
	\eeq
\end{lemma}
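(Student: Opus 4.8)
The plan is to reduce the statement to the classical Laurent expansion on Reinhardt domains. First I would record the two structural properties of $\Om_\epsilon$ that make this work. By its definition $\Om_\epsilon=\{(e^{2\pi i\zeta_1},\dots,e^{2\pi i\zeta_n})\colon\zeta\in\om_\epsilon\}$, where $\om_\epsilon=\R^n+i\mathcal P_\epsilon^+$ is a tube over the open, bounded parallelotope $\mathcal P_\epsilon^+$ (the image of the box $(-\epsilon,1+\epsilon)^n$ under the invertible linear map $t\mapsto\sum_i t_i\IM\tau_i$); hence $\Om_\epsilon$ is a bounded Reinhardt domain, $\ov\Om_\epsilon$ is a compact subset of $(\C^*)^n$, the modulus image $\{(|h_1|,\dots,|h_n|)\colon h\in\Om_\epsilon\}$ equals $\Om_\epsilon^+$, and its logarithm $\log\Om_\epsilon^+=-2\pi\mathcal P_\epsilon^+$ is an open \emph{convex} set, i.e. $\Om_\epsilon$ is logarithmically convex. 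Because $\Om_\epsilon$ is logarithmically convex, the slice of $\Om_\epsilon$ obtained by fixing $h_2,\dots,h_n$ (when non-empty) is a genuine annulus in $\C^*$; expanding $f$ in a one-variable Laurent series in $h_1$ there, noting by differentiation under the integral that the coefficients are holomorphic in $(h_2,\dots,h_n)$, and iterating in the remaining variables, yields $f(h)=\sum_{P\in\Z^n}c_Ph^P$ together with an iterated integral formula for $c_P$ that rearranges into \re{L-coef}. (Equivalently, one pulls $f$ back along $\zeta\mapsto(e^{2\pi i\zeta_1},\dots,e^{2\pi i\zeta_n})$ to a $\Z^n$-periodic holomorphic function on the connected tube $\R^n+i\mathcal P_\epsilon^+$ and reads off its Fourier series.)

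To see that the $c_P$ in \re{L-coef} are independent of $s\in\Om_\epsilon^+$, I would observe that $\eta_P:=f(\zeta)\,\zeta^{-P-(1,\dots,1)}\,d\zeta_1\wedge\cdots\wedge d\zeta_n$ is a holomorphic $n$-form on a neighbourhood of $\ov\Om_\epsilon$ in $(\C^*)^n$, hence $d\eta_P=0$ there. Since $\Om_\epsilon^+$ is convex, two polyradii $s,s'\in\Om_\epsilon^+$ are joined by a path $\gamma$ in $\Om_\epsilon^+$, and the union of the distinguished tori $\mathbb T_{\gamma(t)}=\{\zeta\colon|\zeta_j|=\gamma_j(t)\}$ is an $(n+1)$-chain in $\Om_\epsilon$ with boundary $\mathbb T_{s'}-\mathbb T_s$; Stokes then gives $\int_{\mathbb T_{s'}}\eta_P=\int_{\mathbb T_s}\eta_P$. (One may just as well deform one contour at a time by the one-variable Cauchy theorem inside each annular slice.)

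The estimate \re{L-est} comes out of \re{L-coef} by the crude bound: on $\mathbb T_s\subset\Om_\epsilon$ one has $\bigl|f(\zeta)\,\zeta^{-P-(1,\dots,1)}\,d\zeta_1\wedge\cdots\wedge d\zeta_n\bigr|\le\bigl(\sup_{\Om_\epsilon}|f|\bigr)\,s^{-P}\,|d\theta_1\cdots d\theta_n|$, so $|c_P|\le\bigl(\sup_{\Om_\epsilon}|f|\bigr)\,s^{-P}$ for every $s\in\Om_\epsilon^+$ (note $\sup_{\Om_\epsilon}|f|=\sup_{\ov\Om_\epsilon}|f|<\infty$ since $f$ is holomorphic near the compact set $\ov\Om_\epsilon$), and one takes the infimum over $s$. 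For normal convergence on a compact $K\subset\Om_\epsilon$, set $L:=\{(\log|h_1|,\dots,\log|h_n|)\colon h\in K\}$, a compact subset of the open set $W:=\log\Om_\epsilon^+$, and fix $\delta>0$ with $L+[-\delta,\delta]^n\subset W$. Given $P\in\Z^n$, choose $x^*(P)\in L$ maximizing $x\mapsto\langle P,x\rangle$ over $L$ and put $\log s(P):=x^*(P)+\delta\,\sigma(P)$, where $\sigma(P)=(\mathrm{sign}(p_1),\dots,\mathrm{sign}(p_n))$ with $\mathrm{sign}(0):=0$; then $\log s(P)\in W$, hence $s(P)\in\Om_\epsilon^+$, and for every $h\in K$ one gets $\langle P,\log|h|-\log s(P)\rangle\le-\delta|P|$ with $|P|:=|p_1|+\cdots+|p_n|$, whence $\sup_{h\in K}|c_Ph^P|\le\bigl(\sup_{\Om_\epsilon}|f|\bigr)e^{-\delta|P|}$ and $\sum_{P\in\Z^n}\sup_{h\in K}|c_Ph^P|\le\bigl(\sup_{\Om_\epsilon}|f|\bigr)\bigl(\sum_{p\in\Z}e^{-\delta|p|}\bigr)^n<\infty$. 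I do not expect a genuine obstacle: this is a routine lemma, and the only points needing a little care are verifying that $\Om_\epsilon$ really is a logarithmically convex Reinhardt subdomain of $(\C^*)^n$ (so the classical machinery applies verbatim) and the sign-adapted choice of polyradii $s(P)$ that produces the geometric decay driving the normal convergence.
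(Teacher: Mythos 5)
Your proof is correct and follows essentially the same route as the paper: iterated one-variable Laurent expansion on the annular slices, deformation of the distinguished torus inside $\Om_\epsilon^+$ (one variable at a time, or by Stokes) to get independence of $s$, and the crude bound on the torus for \re{L-est}. The only place you go beyond the paper is the normal-convergence step, where your sign-adapted choice of polyradius $s(P)$ yielding the geometric decay $e^{-\delta|P|}$ on compact subsets is more explicit than the paper's brief remark that one may take $\e(h)$ locally independent of $h$.
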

\begin{proof}Obviously, estimate \re{L-est} follows from \re{L-coef}.  Define $A(a,b)=\{w\in \C:a<|w|<b\}$. Fix $h\in \Om_\epsilon$. Then $|h|:=(|h_1|,\dots, |h_n|)\in \Om_\epsilon^+$.  The latter is an open set and we have for a small positive number $\e=\e(h)$,
	$$
	  f(h)=\f{1}{(2\pi i)^n}
	\int_{\pd A(|h_1|-\e,|h_1|+\e)}
	\cdots \int_{\pd A(|h_n|-\e,|h_n|+\e)}\f{  f(\zeta)\, d\zeta_1\dots d\zeta_n}{(\zeta_1-h_1)\cdots(\zeta_n-h_n)}.
	$$
 By  Laurent expansion in one-variable, we get the expansion \re{L-exp} in which $c_P$ are given by \re{L-coef} if we take $r_j=|h_j|\pm\e$ according to the sign of $p_j\in \Z^{\mp}$.
	
	We want to show that $c_P$ is independent of $s\in\Om_\epsilon^+$. Note that $\Om_\epsilon^+$ is a connected open set. For any two points $s,\tilde s$ can be connected by a union of line segments
in $\Om_\e^+$  which are parallel to coordinate axes in $\R^n$. 
 Using such line segments,  say a line segment $[a,b]\times (s_2,\dots, s_n)$ in $\Om_\e^+$, we know that $ f(\zeta)\zeta^{-P-(1,\dots, 1)}$ is holomorphic in $\zeta_1$ in the closure of $A(a,b)$ when $|\zeta_2|=s_2,\dots, |\zeta_n|=s_n$. By Cauchy theorem, the integrals are independent of $s_1\in[a,b]$ for these $s_2,\dots, s_n$. This shows that \re{L-coef} is independent of $s$.
	
	Finally the series converges uniformly on each compact subset of $\Om_\epsilon$. Indeed, for a small perturbation of $h$, we can choose $\e(h)$ to be independent of $h$. Then we can see easily that the series converges locally uniform in $h$.
\end{proof}
Recall that
\gan
{\mathcal P}_\epsilon^+=\left\{\sum_{i=1}^n t_i\IM\tau_i\in\R^n\colon t\in (-\epsilon,1+\epsilon)^{n}\right\},\\
{\Om}_\epsilon^+=\left\{(e^{-2\pi R_1},\dots, e^{-2\pi R_n})\colon R
\in {\mathcal P}_\epsilon^+
\right\}.
\end{gather*}
\begin{lemma}\label{dist-lemma} There is a constant $\kappa_0>0$ that depends only on $\IM\tau_1,\dots,\IM\tau_n$ such that if  $P\in\R^n$ and $\epsilon>\epsilon'$, there exists $R\in {\mathcal P}_{\epsilon}^+$ such that for all $R'  \in {\mathcal P}_{\epsilon'}^+$ we have
\eq{RR'}
  (R'-R)\cdot P\leq  -\kappa_0(\e-\e')|P|.
\eeq
\end{lemma}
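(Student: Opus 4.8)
The plan is to reduce the statement to an elementary computation about linear functionals on parallelotopes. Write $B$ for the $n\times n$ real matrix whose columns are $\IM\tau_1,\dots,\IM\tau_n$; by the standing hypothesis that these vectors are linearly independent, $B$ is invertible. For $s\geq 0$ one has ${\mathcal P}_s^+=B\bigl((-s,1+s)^n\bigr)$, so for $R=Bt$ and $R'=Bt'$ the quantities $R\cdot P$ and $R'\cdot P$ become $t\cdot q$ and $t'\cdot q$, where $q:=B^{\top}P$. Thus it suffices to exhibit $t$ in the open box $(-\epsilon,1+\epsilon)^n$ with $t'\cdot q-t\cdot q\leq -\kappa_0(\epsilon-\epsilon')|P|$ for every $t'\in(-\epsilon',1+\epsilon')^n$.

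Next I would compute the maximum of the linear functional $t\mapsto t\cdot q$ over the \emph{closed} box $[-s,1+s]^n$. It is attained at the vertex with $t_i=1+s$ when $q_i>0$ and $t_i=-s$ when $q_i<0$ (the coordinates with $q_i=0$ being irrelevant), and its value is $M(s)=M_0+s\|q\|_1$, where $M_0:=\sum_{q_i>0}q_i$ and $\|q\|_1:=\sum_i|q_i|$. In particular $M$ is affine in $s$ with slope $\|q\|_1$, so the ``gain'' in the direction $q$ between level $\epsilon'$ and level $\epsilon$ equals $(\epsilon-\epsilon')\|q\|_1$.

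The only real obstacle is that ${\mathcal P}_\epsilon^+$ corresponds to the \emph{open} box, so the extremal vertex at level $\epsilon$ is not itself available. I would resolve this by working at the intermediate level $s^\ast:=\tfrac{\epsilon+\epsilon'}{2}$, which satisfies $\epsilon'<s^\ast<\epsilon$: take $t_i=1+s^\ast$ if $q_i>0$, $t_i=-s^\ast$ if $q_i<0$, and $t_i=\tfrac12$ if $q_i=0$; since $s^\ast<\epsilon$ and $\epsilon>0$, this $t$ lies in $(-\epsilon,1+\epsilon)^n$, so $R:=Bt\in{\mathcal P}_\epsilon^+$, and $R\cdot P=t\cdot q=M(s^\ast)=M_0+s^\ast\|q\|_1$. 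Meanwhile every $R'=Bt'\in{\mathcal P}_{\epsilon'}^+$ has $t'\in[-\epsilon',1+\epsilon']^n$, hence $R'\cdot P\leq M(\epsilon')=M_0+\epsilon'\|q\|_1$. Subtracting, $(R'-R)\cdot P\leq(\epsilon'-s^\ast)\|q\|_1=-\tfrac12(\epsilon-\epsilon')\|q\|_1$.

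It remains to bound $\|q\|_1$ below. Since $B^{\top}$ is invertible, $\kappa_1:=\min_{|P|=1}\|B^{\top}P\|_1>0$ by compactness of the unit sphere, and $\kappa_1$ depends only on $\IM\tau_1,\dots,\IM\tau_n$ (and on the fixed norm $|\cdot|$, which is harmless as all norms on $\R^n$ are equivalent); hence $\|q\|_1=\|B^{\top}P\|_1\geq\kappa_1|P|$ for all $P$. Taking $\kappa_0:=\kappa_1/2$ gives $(R'-R)\cdot P\leq-\kappa_0(\epsilon-\epsilon')|P|$ for all admissible $R'$, which is the claim; the degenerate case $P=0$ holds trivially for any choice of $R$. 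I do not anticipate any difficulty beyond the open-versus-closed bookkeeping handled by the intermediate level $s^\ast$.
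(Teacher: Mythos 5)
Your proof is correct and follows essentially the same strategy as the paper's: pick $R$ to (nearly) maximize the linear functional $x\mapsto x\cdot P$ over ${\mathcal P}_{\epsilon}^+$ and compare with its supremum over ${\mathcal P}_{\epsilon'}^+$; the paper phrases this geometrically via the orthogonal projection onto the line spanned by $P$ and a supporting half-space together with the bound $\dist(\partial{\mathcal P}_{\epsilon}^+,{\mathcal P}_{\epsilon'}^+)\geq(\epsilon-\epsilon')/C$, while you compute the support function explicitly after pulling back by the matrix $B$. Your intermediate level $s^\ast$ also cleanly handles the fact that the maximizer lies only in the closure $\overline{{\mathcal P}_{\epsilon}^+}$, a technicality the paper passes over by choosing $R$ on the boundary.
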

\begin{proof}Without loss of generality, we may assume that $P$ is a unit vector.
Let $\pi(x)P$ be the orthogonal projection from $x\in\R^n$ onto the line spanned by $P$. Choose $R\in\ov {\mathcal P}_\epsilon^+$ so that $\pi(R)$ has the largest value for $R\in  \ov {\mathcal{P}_\epsilon^+}$. Note that $R$ must be on the boundary of ${\mathcal P}_\epsilon^+$ and latter is contained in the half-space $H$ defined by $\pi(y)\leq\pi(R)$ for $y\in\R^n$. Hence, $\pd H$ is orthogonal to $P$.    Then for any $R'\in \mathcal {P}_{\epsilon'}^+$,
$$
\pi(R)-\pi(R')=\dist(R',\pd H)\geq\dist ( \pd {\mathcal P}_{\epsilon}^+,{\mathcal P}_{\epsilon'}^+)\geq (\e-\e')/C.
$$
Therefore,   we obtain $(R'-R)\cdot P=-(\pi(R)-\pi(R'))\leq -(\e-\e')/C.$
\end{proof}
\begin{remark}Since $ {\mathcal P}_{\epsilon}^+$ is a   parallelotope, we can choose $R$ to be a vertex of ${\mathcal P}_{\epsilon}^+$.
\end{remark}
 In what follows, we denote the fixed constant~:
\beq\label{kappa}
\kappa:=2\pi\kappa_0.
\eeq
\begin{lemma}[Cauchy estimates]\label{cauchy}
	If $f\in\cL A_{\epsilon, r}$, then for all $(P,Q)\in \Z^n\times \mathbb {N}^d$,
	\beq\label{Reps}
	|f_{Q,P}|\leq \frac{M}{r^{|Q|}\max_{s\in \Om_\epsilon^+} s^{P}}.
	\eeq
	Furthermore, 
 if $f=\sum_{Q\in \nn^d, P\in \Z^n}f_{QP}h^Pv^Q\in \cL A_{\epsilon , r}$   
 and  $0<\del<{\kappa}\e$,
	then
	$$
	\|f\|_{\epsilon
  {-{\delta}/{\kappa}}, re^{-\delta}}\leq \frac{C\sup_{\Omega_{\epsilon,r}}|f|}{\del^{\nu}},
	$$
	where $C$ and $\nu$ depends only on $n$ and $d$.
\end{lemma}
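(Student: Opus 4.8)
The plan is to read off the coefficient bound \rea{Reps} from the one-variable Cauchy inequality combined with \rl{laurent}, and then to deduce the norm estimate by summing the Taylor--Laurent series of $f$, the essential input being the geometric decay furnished by \rl{dist-lemma}.

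For \rea{Reps}, fix $h\in\Om_\epsilon$ and view $v\mapsto f(h,v)$ as a holomorphic function on $\overline{\Del_r^d}$; its $Q$-th Taylor coefficient
$$
g_Q(h):=\frac{1}{(2\pi i)^d}\int_{|v_1|=\cdots=|v_d|=r}f(h,v)\,v^{-Q-(1,\dots,1)}\,dv
$$
is holomorphic in $h$ on a neighborhood of $\overline{\Om_\epsilon}$, hence lies in $\cL A_\epsilon$, and satisfies $|g_Q(h)|\le r^{-|Q|}M$ with $M:=\sup_{\Omega_{\epsilon,r}}|f|$. Since $g_Q(h)=\sum_{P\in\Z^n}f_{Q,P}h^P$ is the Laurent expansion of $g_Q$, estimate \rea{L-est} of \rl{laurent} gives $|f_{Q,P}|\le\sup_{\Om_\epsilon}|g_Q|\cdot\inf_{s\in\Om_\epsilon^+}s^{-P}\le M\,r^{-|Q|}\big(\max_{s\in\Om_\epsilon^+}s^P\big)^{-1}$, where I use $\inf_s s^{-P}=(\sup_s s^P)^{-1}$ because $s^P>0$ on $\Om_\epsilon^+$. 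This is \rea{Reps}.

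For the second estimate, put $\epsilon':=\epsilon-\delta/\kappa\in(0,\epsilon)$ and $r':=re^{-\delta}$. For $(h,v)\in\Om_{\epsilon'}\times\Del_{r'}^d$, using that $\Om_{\epsilon'}$ is Reinhardt (so $|h^P|\le\max_{s'\in\Om_{\epsilon'}^+}(s')^P$) and that $|v^Q|\le(r')^{|Q|}=r^{|Q|}e^{-\delta|Q|}$, I estimate by \rea{Reps}:
$$
|f(h,v)|\le\sum_{Q\in\mathbb{N}^d,\,P\in\Z^n}|f_{Q,P}|\,|h^P|\,|v^Q|\ \le\ M\sum_{Q,P}e^{-\delta|Q|}\,\frac{\max_{s'\in\Om_{\epsilon'}^+}(s')^P}{\max_{s\in\Om_\epsilon^+}s^P}.
$$
The point is that the last ratio is $\le e^{-\delta|P|}$. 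Writing $s=(e^{-2\pi R_1},\dots,e^{-2\pi R_n})$ with $R\in{\mathcal P}_\epsilon^+$ gives $s^P=e^{-2\pi R\cdot P}$, so $\max_{s\in\Om_\epsilon^+}s^P=e^{-2\pi\min_{R\in{\mathcal P}_\epsilon^+}R\cdot P}$ and similarly over ${\mathcal P}_{\epsilon'}^+$. Applying \rl{dist-lemma} with $-P$ in place of $P$ produces $R^*\in{\mathcal P}_\epsilon^+$ such that $(R'-R^*)\cdot P\ge\kappa_0(\epsilon-\epsilon')|P|$ for every $R'\in{\mathcal P}_{\epsilon'}^+$; taking the infimum over $R'$ and using $R^*\in{\mathcal P}_\epsilon^+$ yields $\min_{R'\in{\mathcal P}_{\epsilon'}^+}R'\cdot P\ge\min_{R\in{\mathcal P}_\epsilon^+}R\cdot P+\kappa_0(\epsilon-\epsilon')|P|$, and since $\kappa=2\pi\kappa_0$ and $\epsilon-\epsilon'=\delta/\kappa$ the ratio equals $e^{-2\pi(\min_{R'}R'\cdot P-\min_R R\cdot P)}\le e^{-2\pi\kappa_0(\delta/\kappa)|P|}=e^{-\delta|P|}$. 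Substituting and summing geometric series then gives $\|f\|_{\epsilon',r'}\le M\sum_{Q\in\mathbb{N}^d}e^{-\delta|Q|}\sum_{P\in\Z^n}e^{-\delta|P|}\le 2^n M(1-e^{-\delta})^{-(n+d)}$; and since $\delta<\kappa\epsilon$ is small in all our applications (one may assume $\delta\le1$), $1-e^{-\delta}\ge c\,\delta$ for an absolute $c>0$, whence $\|f\|_{\epsilon-\delta/\kappa,\,re^{-\delta}}\le C\,\delta^{-(n+d)}\sup_{\Omega_{\epsilon,r}}|f|$ with $C$ depending only on $n,d$; thus $\nu=n+d$ works.

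The step I expect to be the main obstacle is the comparison of $\max_{{\mathcal P}_{\epsilon'}^+}$ with $\max_{{\mathcal P}_\epsilon^+}$: as stated, \rl{dist-lemma} controls the \emph{maximum} of $R'\cdot P$ over the smaller parallelotope, so one must invoke it with $-P$ to control instead the relevant \emph{minimum}, and then keep careful track of the inclusion ${\mathcal P}_{\epsilon'}^+\subset{\mathcal P}_\epsilon^+$ so as to land precisely the factor $e^{-\delta|P|}$ (the constant $\kappa=2\pi\kappa_0$ having been chosen for exactly this purpose). Everything else — the two one-variable Cauchy estimates and the summation of geometric series, including the elementary bound $1-e^{-\delta}\ge c\,\delta$ valid for bounded $\delta$ — is routine.
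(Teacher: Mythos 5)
Your proposal is correct and follows essentially the same route as the paper: coefficient bounds by composing the polydisc Cauchy estimate in $v$ with the Laurent estimate \re{L-est} in $h$, then the decay factor $e^{-\delta|P|}$ from \rl{dist-lemma} (applied with $-P$, a sign the paper glosses over but you handle correctly) and summation of geometric series. The only differences are cosmetic — you compare the two suprema $\max_{\Om_{\epsilon'}^+}(s')^P$ and $\max_{\Om_{\epsilon}^+}s^P$ directly where the paper bounds $|h^P/\zeta^P|$ inside the Cauchy integral, which amounts to the same computation.
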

\begin{proof}
	According to \rl{laurent} 
	and Cauchy estimates on polydiscs, we have for any $s\in \Om_\epsilon^+$,
$$
	|f_{Q,P}|\leq \frac{\sup_{\Om_\epsilon}|f_Q(h)|}{s^{P}}\leq \frac{\sup_{\Omega_{\epsilon,r}}|f|}{s^{P}r^{|Q|}}.
$$
	According to \rl{laurent} and Cauchy estimates for polydiscs, we have if $(h,v)\in \Omega_{\epsilon e^{-\del'},re^{-\delta}}$, then for all $s\in \Om_{\epsilon}^+$,
\begin{align*}
|f_Q(h)|&\leq \frac{\sup_{v\in  \Delta^d_r 
}|f(h,v)|}{r^{|Q|}},\\
|f_{Q,P}h^P|&\leq \left|\f{1}{(2\pi i)^n}\int_{|\zeta_1|=s_1,\dots, |\zeta_n|=s_n} f_Q(\zeta)\frac{h^P}{\zeta^{P}}\, \frac{d\zeta_1\wedge\cdots \wedge d\zeta_n}{\zeta_1\cdots\zeta_n}\right|.
\end{align*}
Set $s_j=e^{-2\pi R_j}$, $|h_j|=e^{-2\pi R_j'}$, $R=(R_1,\dots, R_n)$ and $R'=(R_1',\dots, R_n')$.  By \rl{dist-lemma},
\beq\label{fourier-type}
\inf_{(|\zeta_1|,\dots,|\zeta_n|)=s\in\Om_{\e}^+}\sup_{h\in \Om_{\e  {-\del'}}}
\left|\frac{h^P}{\zeta^P}\right|=\inf_{R\in \mathcal P_{\e}^+}\sup_{R'\in\mathcal P_{\e  {-\del'}}^+} e^{-2\pi <R-R',P>}\leq e^{-\kappa\delta' |P|},
\eeq
where the positive constant $\kappa$, defined by \rl{dist-lemma} and \re{kappa}, is independent of $P,\zeta_1,\zeta_2$.  Thus
\beq\label{est-small}
|f_{Q,P}h^Pv^Q|\leq  \frac{\sup_{\Omega_{\epsilon,r}}|f| e^{-\kappa\del'|P|}r^{|Q|}e^{-\delta|Q|}}{r^{|Q|}}\leq \sup_{\Omega_{\epsilon,r}}|f| e^{-\delta'\kappa |P|}e^{-\delta|Q|}.
\eeq
	Hence, setting $\del':=\delta/\kappa$, we have
	$$
	\|f\|_{\epsilon
  {-{\del}/{\kappa}}, re^{-\delta}}\leq \sum_{Q\in \nn^d, P\in \Z^n}|f_{Q,P}h^Pv^Q|\leq \frac{C\sup_{\Omega_{\epsilon,r}}|f|}{\del^{\nu}},
	$$
	where $C$ and $\nu$ depends only on $n$ and $d$.
\end{proof}
\subsection{Conjugacy  of the deck transformations}
Let us show that there is a biholomorphism $\Phi=(h,v)+\phi(h,v)$ of some neighborhood $\Omega_{\tilde\epsilon,\tilde r}$, fixing $\cyl$ pointwise (i.e. $\Phi(h,0)=(h,0)$) such that
$$
\Phi\circ \hat \tau_i = \tau_i\circ \Phi,\quad i=1,\ldots, n.
$$
This reads $\hat\tau_i+\phi(\hat\tau_i)=\hat \tau_i(Id+\phi)+\tau^{\bullet}_i(Id+\phi)$, that is, for all $i=1,\ldots n$,
\begin{equation}
\cL L_i(\phi)=\tau^{\bullet}_i(Id+\phi)+\left(\hat \tau_i(Id+\phi)- \hat \tau_i-D\hat \tau_i.\phi\right)=\tau^{\bullet}_i(Id+\phi).
\end{equation}
Here we define $\cL L_i(\phi):=\phi(\hat\tau_i)-D\hat \tau_i.\phi$ and
$ (\cL L^h_i(\phi^h),\cL L^v_i(\phi^v)):=\cL L_i(\phi)$. We have
$$
(\cL L^h_i(\phi^h),\cL L^v_i(\phi^v))=\left(\phi^h(T_ih, M_iv)-T_i\phi^h(h,v),  \phi^v(T_ih, M_iv)-M_i\phi^v(h,v)\right).
$$
  Since $\hat\tau_j$ are linear, then $D\hat\tau_j=\hat\tau_j$.
Expand the latter in Taylor-Laurent expansions as
\begin{eqnarray*}
\cL L_i^h(\phi^h)&=&\sum_{Q\in{\mathbb N}^d_2}\left(\sum_{P\in \Z^n}\left(\la^P_i\mu_i^Q\times Id_n-T_i\right)\phi^h_{Q,P}h^P\right)v^Q, \quad \phi^h_{Q,P}\in \C^n,\\
\cL L_i^v(\phi^v)&=&\sum_{Q\in{\mathbb N}^d_2}\left(\sum_{P\in \Z^n}\left(\la^P_i\mu_i^Q\times Id_d-M_i\right)\phi^v_{Q,P}h^P\right)v^Q, \quad \phi^v_{Q,P}\in \C^d.\\
\end{eqnarray*}
Recall the notations
$
\lambda_{\ell}=(\lambda_{\ell,1},\ldots,\lambda_{\ell,n})$ and $ \mu_\ell=(\mu_{\ell,1},\ldots,\mu_{\ell,d}).
$
With $P=(p_1,\ldots, p_n)\in\Z^n$ and $Q=(q_1,\ldots, q_d)\in \mathbb{N}^d$, we have
$$
\lambda_{\ell}^P\mu_{\ell}^Q:=\prod_{i=1}^n\lambda_{\ell,i}^{p_i}\prod_{j=1}^n\mu_{\ell,j}^{q_j}.
$$

\begin{lemma}\label{computeFij}Let $\tau_j\in\cL A_{\e,r}^{n+d}$ with
$
\tau_j=\hat\tau_j-F_j$ and $ F_j(h,v)=O(|v|^{q+1})
$
  with $q\geq1$.
Suppose that $\tau_i\tau_j=\tau_j\tau_i$ in a neighborhood of $\Om_{0}^{1\cdots n}\times\{0\}$ in $\C^{n+d}$.   Then
$
\cL L_iF_j-\cL L_iF_i=O(|v|^{2q+1}).
$\end{lemma}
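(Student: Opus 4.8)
The plan is to exploit the commutation relation $\tau_i\tau_j = \tau_j\tau_i$ to first order in a perturbative expansion in the vertical variable $v$, treating the $F_j$'s as small quantities of order $|v|^{q+1}$. Write $\tau_j = \hat\tau_j - F_j$ with $F_j = O(|v|^{q+1})$. The key identity to expand is
\begin{equation*}
\tau_i\circ\tau_j = (\hat\tau_i - F_i)\circ(\hat\tau_j - F_j) = \hat\tau_i\hat\tau_j - F_i\circ\hat\tau_j - D\hat\tau_i\cdot F_j + (\text{terms of order }|v|^{2q+2}),
\end{equation*}
where the last bracket collects $F_i\circ(\hat\tau_j - F_j) - F_i\circ\hat\tau_j = O(|v|^{q+1})\cdot O(|v|^{q+1})$ together with the second-order Taylor remainder of $\hat\tau_i$; but since $\hat\tau_i$ is \emph{linear}, that Taylor remainder vanishes identically and only the composition error survives. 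Crucially, since $F_j = O(|v|^{q+1})$ and $\hat\tau_j$ multiplies the vertical component by $M_j$ (a fixed invertible matrix), the term $F_i\circ\hat\tau_j$ is again $O(|v|^{q+1})$, so the error term $F_i\circ(\hat\tau_j - F_j) - F_i\circ\hat\tau_j$ is genuinely $O(|v|^{2q+2}) = O(|v|^{2q+1})$.

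Next I would subtract the two ways of composing. Using $\hat\tau_i\hat\tau_j = \hat\tau_j\hat\tau_i$ (the linear parts commute because the $T_j$'s and $M_j$'s are simultaneously diagonal) and the hypothesis $\tau_i\tau_j = \tau_j\tau_i$, the difference of the two expansions gives, modulo $O(|v|^{2q+1})$,
\begin{equation*}
-F_i\circ\hat\tau_j - D\hat\tau_i\cdot F_j \;\equiv\; -F_j\circ\hat\tau_i - D\hat\tau_j\cdot F_i \pmod{O(|v|^{2q+1})}.
\end{equation*}
Rearranging, $\bigl(F_j\circ\hat\tau_i - D\hat\tau_i\cdot F_j\bigr) - \bigl(F_i\circ\hat\tau_j - D\hat\tau_j\cdot F_i\bigr) = O(|v|^{2q+1})$. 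Now recall the definition $\cL L_i(\phi) := \phi(\hat\tau_i) - D\hat\tau_i\cdot\phi$ introduced just before the lemma. The first parenthesis is exactly $\cL L_i(F_j)$ and the second is exactly $\cL L_j(F_i)$, so I obtain $\cL L_i F_j - \cL L_j F_i = O(|v|^{2q+1})$. Here I should double-check against the stated conclusion: the lemma as printed writes "$\cL L_iF_j-\cL L_iF_i$", which I read as a typo for $\cL L_i F_j - \cL L_j F_i$ (the meaningful antisymmetric combination); the argument above produces precisely that.

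The main obstacle is purely bookkeeping: I must verify carefully that every error term that arises in expanding the composition $\tau_i\circ\tau_j$ is actually $O(|v|^{2q+1})$ and not merely $O(|v|^{q+2})$. The delicate point is that composing $F_i$ (of vertical order $q+1$) with $\tau_j = \hat\tau_j - F_j$ rather than with $\hat\tau_j$ introduces a correction whose vanishing order is the product order $2q+2$, which requires knowing that $\hat\tau_j$ and $F_j$ both preserve (or improve) the vertical filtration — this is where one uses that $\hat\tau_j$ is linear with $\hat\tau_j(h,v) = (T_j h, M_j v)$ and that $F_j = O(|v|^{q+1})$ has no lower-order vertical terms. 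A second, minor subtlety is that the commutation hypothesis is only assumed in a neighborhood of $\Om_0^{1\cdots n}\times\{0\}$, so the conclusion $\cL L_i F_j - \cL L_j F_i = O(|v|^{2q+1})$ is likewise an identity of Taylor–Laurent coefficients valid on that connected Reinhardt neighborhood; by Lemma~\ref{connected} and uniqueness of the Taylor–Laurent expansion there, this suffices to pin down the relevant coefficients. Once these order estimates are in hand, the identity is immediate from the algebra above.
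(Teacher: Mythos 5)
Your proof is correct and follows essentially the same route as the paper's: Taylor-expand $\tau_i\circ\tau_j$ in $v$, use linearity and commutativity of the $\hat\tau_i$ together with the hypothesis $\tau_i\tau_j=\tau_j\tau_i$ on the connected neighborhood of $\Om_0^{1\cdots n}\times\{0\}$, and read off $\cL L_iF_j-\cL L_jF_i=O(|v|^{2q+1})$ (the printed conclusion is indeed a typo for this antisymmetric combination). One tiny imprecision: the composition error $F_i\circ(\hat\tau_j-F_j)-F_i\circ\hat\tau_j$ is only $O(|v|^{2q+1})$ rather than $O(|v|^{2q+2})$, because the cross term $D_vF_i\cdot F_j^v$ has order $q+(q+1)=2q+1$; this is still exactly the order required, so nothing breaks.
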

\begin{proof}
 Recall that $\hat\tau_i(h,v)$ are linear maps in $h,v$.
Also,  $\hat\tau_i\hat\tau_j$ sends $\Om_{\e}^j\times\Del_\e^d$ into $\C^{n+d}$. Since  $\tau_j\in\cL A_{\e,r}^{n+d}$ and $\tau_j=\hat\tau_j+O(|v|^2)$, the continuity implies that $\tau_i\tau_j$ is well-defined on the product domain $\Om^{j}_{\e'}\times\Del_{r'}^d$ when $\e', r'$ are sufficiently small. Fix $h\in\Om^{j}_{\e'}$. By Taylor expansions in $v$, we obtain
$$
\tau_i\tau_j(h,v)=\hat\tau_i\hat\tau_j(h,v)+
\hat\tau_jF_j (h,v)+F_i\circ\hat\tau_j(h,v)+O(|v|^{2q+1}).
$$
Since $\hat\tau_i\hat\tau_j=\hat\tau_j\hat\tau_i$ and $\tau_i\tau_j=\tau_j\tau_i$ in a neighborhood of $\Om_{0}^{1\cdots n}\times\{0\}$, we get $\cL L_iF_j=\cL L_jF_i=O(|v|^{2q+1})$ in a possibly smaller neighborhood of $\Om_{0}^{1\cdots n}\times\{0\}$.
\end{proof}

  We will apply the following result to   $F_j=\hat\tau_j-J^{2q}(\tau_j)$,   where $J^{2q}(\tau_j)$ denotes the $2q$-jet at $0$ in the variable $v$. These are holomorphic on $\Om_{\e,r}$.
  Recall that
  $$
  \Omega^{ij}_{\epsilon',r'}:= \Omega_{\epsilon',r'}\cap\hat\tau_i^{-1}(\Omega_{\epsilon',r'})\cap\hat\tau_j^{-1}(\Omega_{\epsilon',r'})
  $$
\begin{prop}\label{cohomo}
		Assume   $N_C$
		is   
		Diophantine.   Fix $\epsilon_0,r_0,\del_0$  in $(0,1)$. Let $0<\e'<\epsilon<\epsilon_0$, $0<r'<r<r_0$, $0<\del<\del_0$, and $\frac{\del}{\kappa}<\e$. Suppose that $F_i\in {\cL A}_{\epsilon, r}$, $i=1,\ldots, n$,
 satisfy
		\beq
\label{almost-com}  \cL L_i(F_j)-\cL L_j(F_i)=0\quad
\text{on  }  \Omega^{ij}_{\epsilon',r'}.
		\eeq
There exist functions  $G \in  \tilde{\cL A}_{\epsilon
 {-{\del}/{\kappa}},re^{-\delta}}$ 
 such that
		 \begin{align}\label{formal2q+1}
		 \cL L_i(G)&=F_i \ \text{on   } \Omega_{\epsilon  {-{\del}/{\kappa}},re^{-\del}}.
				 \end{align}
	 Furthermore, $G$ satisfies
		\begin{align}\label{estim-sol}
		\|G\|_{\epsilon  {-{\del}/{\kappa}},re^{-\delta}}&\leq  \max_i\|F_{i}\|_{\epsilon,r}\frac{C'}{\delta^{\tau+\nu}},\\
\label{estim-solcompo}
		\|G\circ\hat\tau_i\|_{\epsilon  {-{\del}/{\kappa}},re^{-\delta}}&\leq  \max_i\|F_{i}\|_{\epsilon,r}\frac{  C'}{\delta^{\tau+\nu}}.
		\end{align}
		for some constant $\kappa, C'$  that are independent of $F,q,\del, r,\e$ and $\nu$ that depends only on $n$ and $d$.
Furthermore, if $F_j(h,v)=O(|v|^{q+1})$ for all $j$, then
\eq{}
G(h,v)=O(|v|^{q+1}), \quad G(h,v)=J^{2q}G(h,v).
\eeq
\end{prop}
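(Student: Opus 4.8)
The plan is to solve the "cohomological equations" $\cL L_i(G)=F_i$ coefficient-by-coefficient in the Taylor–Laurent expansion, using the Diophantine hypothesis to invert the relevant operators and control the losses, and then to verify that the compatibility condition \eqref{almost-com} makes the coefficient-wise solutions coherent across the index $i$. Concretely, write $F_i=\sum_{Q,P}F_{i;Q,P}h^Pv^Q$ and seek $G=\sum_{Q,P}G_{Q,P}h^Pv^Q$. From the Taylor–Laurent expansion of $\cL L_i$ recorded just before Lemma~\ref{computeFij}, the $h$-component equation is $(\la_i^P\mu_i^Q\,Id_n-T_i)G^h_{Q,P}=F^h_{i;Q,P}$ and the $v$-component is $(\la_i^P\mu_i^Q\,Id_d-M_i)G^v_{Q,P}=F^v_{i;Q,P}$, where $T_i,M_i$ are diagonal. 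First I would fix $(Q,P)$ with $|Q|\ge 2$ and, for each coordinate, choose the index $i=i_h$ (resp. $i=i_v$) that realizes the maximum in \eqref{dh} (resp. \eqref{dv}); solving the scalar equation for that coordinate of $G_{Q,P}$ against $F_{i_h}$ (resp. $F_{i_v}$) gives a well-defined value, and the Diophantine lower bound $D/(|P|+|Q|)^\tau$ on the divisor yields $|G_{Q,P}|\le (|P|+|Q|)^\tau D^{-1}\max_i|F_{i;Q,P}|$.

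The second step is to check that this choice is consistent: the value of $G_{Q,P}$ so obtained must simultaneously satisfy \emph{all} the equations $\cL L_i(G)=F_i$ for every $i$, not just the one used to define it. This is exactly where \eqref{almost-com} enters. Expanding the identity $\cL L_i(F_j)=\cL L_j(F_i)$ in Taylor–Laurent coefficients gives $(\la_i^P\mu_i^Q-\la_{i,k})F^h_{j;Q,P,k}=(\la_j^P\mu_j^Q-\la_{j,k})F^h_{i;Q,P,k}$ for each coordinate $k$ (and similarly in $v$). Using this relation, once $G_{Q,P,k}$ is defined via the maximizing index, a short manipulation shows $(\la_i^P\mu_i^Q-\la_{i,k})G_{Q,P,k}=F^h_{i;Q,P,k}$ for every $i$ — the compatibility condition is precisely what is needed to transport the solution from the maximizing index to all others. (The case where all divisors for a fixed coordinate vanish does not occur here because of the Diophantine hypothesis, so there is no cohomological obstruction; in the merely non-resonant case one would argue that \eqref{almost-com} forces the offending $F$-coefficients to vanish.) The vanishing order statement $G=O(|v|^{q+1})$ and $G=J^{2q}G$ follows immediately, since only coefficients with $q+1\le|Q|\le 2q$ can be nonzero when $F_i=O(|v|^{q+1})$ and $F_i=J^{2q}(\hat\tau_i)-\cdots$ is a polynomial of degree $2q$ in $v$; more precisely, the divisor equations show $G_{Q,P}=0$ whenever $F_{i;Q,P}=0$ for all $i$.

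The third step is the quantitative estimate: from $|G_{Q,P}|\lesssim (|P|+|Q|)^\tau\max_i|F_{i;Q,P}|$ together with the Cauchy estimate $|F_{i;Q,P}|\le \|F_i\|_{\epsilon,r}\,r^{-|Q|}\bigl(\max_{s\in\Om_\epsilon^+}s^P\bigr)^{-1}$ from Lemma~\ref{cauchy}, I would sum the series on the shrunken domain $\Om_{\epsilon-\delta/\kappa,re^{-\delta}}$. By the same Fourier-type decay estimate \eqref{fourier-type} used in the proof of Lemma~\ref{cauchy}, summing $h^P/\zeta^P$ contributes $e^{-\kappa(\delta/\kappa)|P|}=e^{-\delta|P|}$ and the $v$-part contributes $e^{-\delta|Q|}$; the polynomial weight $(|P|+|Q|)^\tau$ is absorbed at the cost of a factor $C'/\delta^{\tau+\nu}$, where the extra $\delta^{-\nu}$ comes from summing the geometric-times-polynomial series in $n+d$ variables as in Lemma~\ref{cauchy}. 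This gives \eqref{estim-sol}. For \eqref{estim-solcompo}, since $\hat\tau_i$ maps $\Om_{\epsilon-\delta/\kappa,re^{-\delta}}$ into a fixed dilate contained in $\tilde\Om_{\epsilon-\delta/\kappa,re^{-\delta}}$ (this is why $G$ lives in $\tilde{\cL A}$ rather than $\cL A$), the bound on $G\circ\hat\tau_i$ follows from the same majorant series evaluated at $\hat\tau_i(h,v)$, using that $\hat\tau_i$ is linear and diagonal so it only rescales each monomial; the connectedness of the overlap domains from Lemma~\ref{connected} ensures the Laurent expansion of $G$ and of $G\circ\hat\tau_i$ are unambiguous.

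The main obstacle I expect is the consistency argument of the second step — verifying that the single function $G$ built from the maximizing indices genuinely solves all $n$ equations simultaneously. The subtlety is that for a fixed monomial the maximizing index $i_h$ depends on $(Q,P)$, so one is really patching together infinitely many local choices, and one must exploit \eqref{almost-com} at the level of individual Taylor–Laurent coefficients on the overlap domain $\Om^{ij}_{\epsilon',r'}$, taking care that this domain (a Reinhardt domain containing $\tilde\Om_\epsilon^{1\cdots n}\times\Delta^d_{r'}$ by Lemma~\ref{connected}) is large enough that the coefficient identities extracted there are valid identities among the global Laurent coefficients. The estimates of step three are routine given Lemma~\ref{cauchy} and \eqref{fourier-type}; the vanishing-order bookkeeping is immediate.
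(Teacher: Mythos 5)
Your overall strategy — solve coefficient-by-coefficient using the maximizing index from the Diophantine condition, use \eqref{almost-com} at the level of Taylor--Laurent coefficients to check that the single $G$ solves all $n$ equations, then sum with the Fourier-type decay \eqref{fourier-type} to get \eqref{estim-sol} — is exactly the paper's argument, and your identification of the consistency step as the crux of \eqref{formal2q+1} is on target (the paper's relation \eqref{compat-coeff} is precisely the coefficient identity you describe).

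However, there is a genuine gap in your treatment of \eqref{estim-solcompo}. You assert that the bound on $G\circ\hat\tau_i$ ``follows from the same majorant series evaluated at $\hat\tau_i(h,v)$, using that $\hat\tau_i$ is linear and diagonal so it only rescales each monomial.'' That rescaling is exactly the problem: the $(Q,P)$-coefficient of $G\circ\hat\tau_m$ is $G_{Q,P}\,\lambda_m^P\mu_m^Q$, and since $|\lambda_{m,k}|=e^{-2\pi\IM\tau_{mk}}\neq 1$ in general, the factor $|\lambda_m^P\mu_m^Q|$ grows exponentially in $|P|$ in some directions. The bound $|G_{Q,P}h^Pv^Q|\lesssim e^{-\delta(|P|+|Q|)/2}$ that gives \eqref{estim-sol} is destroyed by such a factor, so the majorant series for $G\circ\hat\tau_m$ does not converge on $\Omega_{\epsilon-\delta/\kappa,re^{-\delta}}$ by this naive argument. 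The paper closes this gap with an \emph{enhanced} small-divisor estimate (its \eqref{sd-ehanced}--\eqref{sd-ehanced+}): by a dichotomy on whether $\max_k|\lambda_k^P\mu_k^Q|$ exceeds $B:=2\max_{k,i,j}(|\lambda_{k,i}|,|\mu_{k,j}|)$, one shows
$$
\max_{\ell}\bigl|\lambda_\ell^P\mu_\ell^Q-\lambda_{\ell,i}\bigr|\;\geq\;\frac{D'\max_k|\lambda_k^P\mu_k^Q|}{(|P|+|Q|)^\tau},
$$
i.e.\ the divisor used to define $G_{Q,P}$ is bounded below proportionally to $\max_k|\lambda_k^P\mu_k^Q|$, which exactly cancels the factor $|\lambda_m^P\mu_m^Q|$ introduced by composition. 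Without this extra lower bound (or an equivalent device), neither \eqref{estim-solcompo} nor the membership $G\in\tilde{\cL A}_{\epsilon-\delta/\kappa,re^{-\delta}}$ — which amounts to boundedness of $G$ on the shifted, $M_i$-dilated domains $\hat\tau_i(\overline\Omega_{\epsilon-\delta/\kappa,re^{-\delta}})$ — is established. The remaining pieces of your proposal (the vanishing-order bookkeeping, and the gluing of $G\circ\hat\tau_m\circ\hat\tau_m^{-1}$ across overlaps via Lemma~\ref{connected}) match the paper and are fine.
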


\begin{proof}
Since $F_i\in {\cL A}_{\epsilon, r}$, we can write
$$
F_i(h,v)=\sum_{Q\in \mathbb{N}^d, |Q|\geq2}\sum_{P\in\Z^n}F_{i,Q,P}h^Pv^Q,
$$
which converges normally   for $(h,v)\in\Om_{\epsilon,r}$. We emphasize that $F_{i,Q,P}$
are vectors, and its $k$th component
 is denoted  by $F_{i,k,Q,P}$.
For each $(Q,P)\in \mathbb{N}^d\times \Z^n$,   each $i=1,\ldots, n$, and each $j=1,\ldots ,d$, let $i_h:=i_h(Q,P,i), i_v:=i_v(Q,P,j)$ be in $ \{1,\ldots, n\}$ as in \rd{def-nr}.
Let us set
\begin{align}
	G^h_i&:=\sum_{Q\in \mathbb{N}^d, 2\leq|Q|\leq 2q}\sum_{P\in\Z}\frac{F^h_{i_h,i,Q,P}}{\lambda_{i_h}^P\mu_{i_h}^Q-\lambda_{i_h,i}}h^Pv^Q,\quad i=1,\ldots, n\label{sol-coh-h}\\
	G^v_j&:=\sum_{Q\in \mathbb{N}^d, 2\leq |Q|\leq 2q}\sum_{P\in\Z}\frac{F^v_{i_v,j,Q,P}}{\lambda_{i_v}^P\lambda_{i_v}^Q-\mu_{i_v,j}}h^Pv^Q,\quad j=1,\ldots. d\label{sol-coh-v}.
\end{align}
According to \re{almost-com}, we have
\beq\label{compat-coeff}
(\lambda_{i_h}^P\mu_{i_h}^Q-\lambda_{i_h,i})F_{m,i,Q,P}^h=(\lambda_{m}^P\mu_{m}^Q-\lambda_{m,i}) F_{i_h,i,Q,P}^h, \quad 2\leq|Q|\leq 2q.
\eeq
Therefore, using \re{compat-coeff}, the $i$th-component of $\cL L_m(G)$ reads
\begin{align*}
	\cL L_m(G^h_i) = &\sum_{Q\in \mathbb{N}^d, 2\leq |Q|\leq 2q }\sum_{P\in \Z^n}(\lambda_{m}^P\mu_{m}^Q-\lambda_{m,i})\frac{F^h_{i_h,i,Q,P}}{(\lambda_{i_h}^P\mu_{i_h}^Q-\lambda_{i_h,i})}h^Pv^Q\\
	= &\sum_{Q\in \mathbb{N}^d,2\leq|Q|\leq 2q}\sum_{P\in \Z^n}F_{m,i,Q,P}^hh^Pv^Q.
\end{align*}
Proceeding similarly for the vertical component, we have obtained, the formal equality~:
\beq\label{formal-lin}
 \cL L_m(G) = F_m 
,\quad m=1,\ldots, n.
\eeq
Let us estimate these solutions.
According to \rd{dioph} and formulas \re{sol-coh-h}-\re{sol-coh-v}, we have
\eq{Ghv}
\max_{i,j}\left (|G^h_{i,Q,P}|,|G^v_{j,Q,P}|\right )\leq \max_i|F_{i,Q,P}|\frac{(|P|+|Q|)^{\tau}}{D}.
\eeq
Let $(h,v)\in \Om_{\epsilon  {-{\del}/{\kappa}},re^{-\delta}}$. According to \re{est-small}, 
we have
\aln
\|G^h_{Q,P}h^Pv^Q\|&\leq \max_i\|F_{i}\|_{\epsilon,r}e^{-\delta(|P|+|Q|)}\frac{(|P|+|Q|)^{\tau}}{D}\\
&\leq \max_i\|F_{i}\|_{\epsilon,r}e^{-\delta/2(|P|+|Q|)}\frac{(2\tau e)^{\tau}}{D\delta^{\tau}}.
\end{align*}
Summing over $P$ and $Q$, we obtain
$$
\|G\|_{\epsilon  {-{\del}/{\kappa}},re^{-\delta}}\leq \max_i\|F_{i}\|_{\epsilon,r}\frac{C'}{\delta^{\tau+\nu}},
$$
for some constants $C',\nu$ that are independent of $F,\epsilon,\del$.
 Hence, $G\in
 \cL A_{\epsilon  {-{\del}/{\kappa}},re^{-\delta}}$. 

Let us prove \re{estim-solcompo}.   Let $B:=2\max_{k,i,j}(|\lambda_{k,i}|, |\mu_{k,j}|)$. Then, there is a constant $D'$ such that
\ga\label{sd-ehanced}
\max_{\ell\in \{1,\ldots, n\}}\left |\lambda_{\ell}^P\mu_{\ell}^Q-\lambda_{\ell,i}\right|
\geq  \frac{D'\max_k|\lambda_{k}^P\mu_{k}^Q|}{(|P|+[Q|)^{\tau}},\\
\label{sd-ehanced+}
\max_{\ell\in \{1,\ldots, n\}}  \left |\lambda_{\ell}^P\mu_{\ell}^Q-\mu_{{\ell},j}\right | \geq  \frac{D'\max_k|\lambda_{k}^P\mu_{k}^Q|}{(|P|+[Q|)^{\tau}}.
\end{gather}
Indeed, if $\max_k|\lambda_k^P\mu_k^Q|<B$, then \rd{dioph} gives \re{sd-ehanced} with $D':= \frac{D}{B}$. Otherwise, if $$|\lambda_{k_0}^P\mu_{k_0}^Q|:=\max_k|\lambda_{k}^P\mu_{k}^Q|\geq B,$$
then $|\lambda_{k_0,i}|\leq \frac{B}{2}\leq \frac{|\lambda_{k_0}^P\mu_{k_0}^Q|}{2}$. Hence, we have
$$
\left |\lambda_{k_0}^P\mu_{k_0}^Q-\lambda_{k_0,i}\right|\geq \left||\lambda_{k_0}^P\mu_{k_0}^Q|- |\lambda_{k_0,i}|\right|\geq \frac{|\lambda_{k_0}^P\mu_{k_0}^Q|}{2}.
$$
We have verified \re{sd-ehanced}. Similarly, we can verified \re{sd-ehanced+}.
Finally, combining all cases gives us, for $m=1,\ldots, n$,
\begin{align*}
|[G\circ\hat \tau_m]_{QP}| &= \left| G_{Q,P}\lambda_{m}^P\mu_{m}^Q
\right|\leq\max_{\ell}|F_{\ell,Q,P}| \frac{|\lambda_{m}^P\mu_{m}^Q|}{|\lambda_{i_h}^P\mu_{i_h}^Q-\lambda_{i_h,i}|}\\
&\leq \max_{\ell}|F_{\ell,Q,P}|\frac{|\lambda_{m}^P\mu_{m}^Q|(|P|+|Q|)^{\tau}}{D'\max_k|\lambda_{k}^P\mu_{k}^Q|}\\
&\leq  \max_{\ell}|F_{\ell,Q,P}|\frac{(|P|+|Q|)^{\tau}}{D'}.\end{align*}
 Hence,  $\tilde G_m:=G\circ \hat \tau_m\in \cL A_{\epsilon  {-{\del}/{\kappa}},re^{-\delta}}$.   We can define  $\tilde G\in \tilde{\cL A}_{\epsilon  {-{\del}/{\kappa}},re^{-\delta}}$ such that $\tilde G=\tilde G_m\hat\tau_m^{-1}$ on $\hat\tau_m\Om_{\e,r}$.  We verify that $\tilde G$ extends to a single-valued holomorphic function of class $\tilde {\cL A}_{\e,r}$. Indeed,
  $\tilde G_i\hat\tau_i^{-1}=\tilde G_j\hat\tau_j^{-1}$ on $\hat\tau_i\Om_{\e,r}\cap \hat\tau_j\Om_{\e,r}$, since the latter is connected by \rl{connected} and the two functions agree with $G$ on $\hat\tau_i\Om_{\e,r}\cap \hat\tau_j\Om_{\e,r}\cap \Om_{\e,r}$ that contains a neighborhood of $\tilde\Om_\e\times\{0\}$ in $\C^{n+d}$.
%
\end{proof}

  In what follows, we shall set $\hat\tau_0=Id$ and for any 
   $f \in (\tilde A_{\epsilon,r})^{
   n+d}$ and $F\in (\tilde{\cL  A}_{\epsilon,r})^{n+d}$, we set
\begin{align*}
|||f|||_{\epsilon,r}&:=\|f\|_{\epsilon,r}+\sum_{i=1}^n\| \hat\tau_i^{-1}
f\circ \hat\tau_i\|_{\epsilon,r}= \sum_{i=0}^n\|  \hat\tau_i^{-1}
f\circ \hat\tau_i\|_{\epsilon,r},\\
F_{(i)}&:=\hat\tau_i^{-1}F\circ\hat\tau_i\in \cL A_{\epsilon,r},\quad F_{(0)}:=F,\quad i=1,\ldots, n.
\end{align*}

\subsection{Iteration scheme}
We shall prove the main result through a Newton scheme.
Let us define sequences of positive real numbers
$$
\del_{k}:=
  \frac{\del_0}{(k+1)^2}, 
\quad r_{k+1}:=r_ke^{-5\del_k}, \quad \epsilon_{k+1}:= \epsilon_k
{ {-\frac{5\del_k}{\kappa}}}
$$
such that
\beq\label{sigma}
\sigma:=\sum_{k\geq 0}\del_k <2\del_0 
\eeq
We assume the following conditions hold~:
\begin{gather}
	 \label{cond1}\del_0<\frac{\kappa}{20}\e_0,\\
	\label{cond2}\del_0<\frac{\ln 2}{10}.
\end{gather}
Condition \re{cond1} ensures that
$$
\frac{5}{\kappa}\sigma<\frac{10}{\kappa}\del_0<\frac{\e_0}{2},\quad \text{so that}\quad \e_{k}>\frac{\e_0}{2},\; k\geq 0.
$$
Condition \re{cond2} ensures that
$$
e^{-5\sigma}>e^{-10\del_0}>\frac{1}{2},\quad \text{so that}\quad r_k>\frac{r_0}{2},\;  k\geq 0.
$$
Let  $m=5$ be  fixed. We define $\epsilon_{k+1}<\epsilon_{k}^{(\ell)}<\epsilon_{k}$ and $r_{k+1}<r_{k}^{(\ell)}<r_{k}$ 
, $\ell=1,\ldots m$  as follows~:
\begin{align*}
\epsilon_{k}^{(\ell)}=\epsilon_{k} {-\frac{\ell\del_k}{\kappa}},&\quad\epsilon_{k+1}:=\epsilon_{k}^{(m)},\\
r_k^{(\ell)}=r_ke^{-\ell\del_k},&\quad r_{k+1}:=r_k^{(m)} .
\end{align*}
 We emphasize that condition \re{cond1} ensures $\epsilon_{k}^{(\ell)}>0$.
 Let us assume that for each $i=1,\ldots, n$, $\tau_i^{(k)}=\hat \tau_i+ \tau^{\bullet (k)}_i,$ is holomorphic and on $\Om_{\epsilon_{k},r_k}$ it satisfies
 \eq{4.32-15f}
  ||\tau^{\bullet(k)}||_{\epsilon_{k},r_k}<\del_k^{\mu}, \quad \tau^{\bullet(k)}(h,v)=O(|v|^{q_k+1}).
  \eeq
 We further assume that $\tau_i^{(k)},\tau_j^{(k)}$ commute on
 $\Om^{ij}_{\epsilon',r'}
 \subset \Om_{\epsilon_{k},r_k}$
 for some positive
 $\e'<\e_k,r'<r_k$.
   We take
 $$
  q_{k+1}=2q_k+1\geq q_02^{k}
 $$
 for $q_0\geq
 1$ to be determined.

We will define a sequence $\Phi^{(k)}$ with   $\Phi^{(k)}(h,0)=(h,0)$. Let us write on appropriate domains
\begin{gather*}
\Phi^{(k)}=Id+\phi^{(k)},\quad (\Phi^{(k)})^{-1}:=Id-\psi^{(k)},\\
\quad \tau_i^{(k+1)} = \Phi^{(k)}\circ \tau_i^{(k)}\circ (\Phi^{(k)})^{-1},\quad i=1,\ldots, n.
\end{gather*}

Note that there is a constant $C>0$ (depending only on the $\mu_{i,j}$'s) such that if   $\e''<\e'$, $Cr''<r'$ then
$$
\Om^{ij}_{\e'',r''}\subset\Om_{\e'}\times\Del_{r'}^d,\quad \Om_{\e''}\times\Del_{r''}^d\subset \Om^{ij}_{\e',r'}.
$$
Since the $\tau_i^{(k)}, \tau_j^{(k)}$   commute   on $\Om_{\e'}\times\Del_{\e'}^d$ for some $\e'>0$ and $\Phi^{(k)}(h,v)=(h,v)+O(|v|^2)$, then   $\tau_i^{(k+1)}, \tau_j^{(k+1)}$ still commute on the same kind of domains for a possibly smaller $\e'$.
  By \rl{computeFij}, we obtain :
\begin{align}\label{compatible}
\cL L_j(\tau^{\bullet (k)}_i)-\cL L_i(\tau^{\bullet (k)}_j)=&
O(|v|^{2q_k+1}).
\end{align}

We want to find $\phi^{(k)}$ so that
\begin{align}
	\label{tik+1}
	\tau_i^{(k+1)}:=\hat\tau_i+\tau^{\bullet (k+1)}_i&=\Phi^{(k)}\circ\tau_i^{(k)}\circ (\Phi^{(k)})^{-1}\\
	&=\hat \tau_i(Id-\psi^{(k)})+\tau^{\bullet(k)}_i(Id-\psi^{(k)})\nonumber\\
	&\quad +\phi^{(k)}\left(\hat \tau_i(Id-\psi^{(k)})+\tau^{\bullet(k)}_i(Id-\psi^{(k)})\right)
	\nonumber\end{align}
is defined and bounded on $ \Om_{\epsilon_{k+1},r_{k+1}}$.

 We now define $\Phi^{(k)}$ by applying \rp{cohomo} with these  $F_i:=-J^{2q_k}(\tau^{\bullet (k)}_i)$. Let $\phi^{(k)}$ stands for $G$.   
 Therefore, given $0<\del<\kappa \epsilon_k^{(1)}$, $\phi^{(k)}$ is holomorphic and bounded on $\tilde\Om_{\epsilon_{k} {-\frac{\del}{\kappa}},r_ke^{-\del}}$
 and it satisfies on $\Om_{\epsilon_{k}^{(1)} {-\frac{\del}{\kappa}},r_k^{(1)}e^{-\del}}$,
\beq\label{sol-cohom-approx}
  \cL L_i(\phi^{(k)}):=\phi^{(k)}(\hat\tau_i)-D\hat \tau_i.\phi^{(k)}= -J^{2q_k}(\tau^{\bullet (k)}_i),\quad i=1,\ldots,n.
\eeq
Writing formally $\Phi^{(k)}\circ \Psi^{(k)}=Id$ and using linearity of $\hat\tau_i$, we obtain
\begin{align}
\psi^{(k)}_{(i)} &= \phi^{(k)}_{(i)}(Id-\psi^{(k)}_{(i)}), \quad i=0,\ldots, n,\label{contr-i}
\end{align}
recalling the notation $\phi^{(k)}_{(i)}=\hat\tau_i^{-1}\phi^{(k)}\circ \hat\tau_i$.
According to \rp{cohomo}, we have
\beq\label{phik}
|||\phi^{(k)}|||_{\epsilon_k  {-\frac{\del}{\kappa}},r_ke^{-\delta}}\leq ||\tau^{\bullet(k)}||_{\epsilon_k,r_k}\frac{C'}{\delta^{\tau+\nu}}\leq C'\delta_k^{\mu}\del^{-\tau-\nu}.
\eeq
 We recall that the constatn $C'$ does not depend on $k$, nor on $\tau^{\bullet(k)}$.
\begin{lemma}\label{l-inclusion}   There is constant $\tilde D>0$ (independent of $k$) such that, given a positive number $\mu$,
so that  $\del< \min\{\tilde D,\ln 2,\frac{r_0}{2}\}$  satisfies
 \eq{2C'}
 2^{m+2}C'\delta_k^{\mu}\del^{-\tau-\nu-3}<1,
  \eeq
where  $m=5$
. Then, for all $0\leq \ell \leq m$ and $i=0,\ldots,n$, the maps $\Phi^{(k)}_{(i)}:=Id+\phi^{(k)}_{(i)}$ are biholomorphisms
	$$
	\Phi^{(k)}_{(i)}\colon \Omega_{\epsilon_k {-\frac{(\ell+1)\del}{\kappa}},r_ke^{-(\ell+1)\delta}}\to \Omega_{\epsilon_k  {-\frac{\ell\del}{2\kappa}},r_ke^{-\frac{\ell\delta}{2}}}
	$$
with inverse $\Psi^{(k)}_{(i)}:=Id-\psi^{(k)}_{(i)} ~:\Omega_{\epsilon_k {-\frac{(\ell+2)\del}{\kappa}},r_ke^{- (\ell+2)\delta}}\to \Omega_{\epsilon_k {-\frac{(\ell+1)\del}{\kappa}},r_ke^{-(\ell+1)\delta}}$ satisfying
	$$\Phi^{(k)}_{(i)}\circ \Psi^{(k)}_{(i)}=\hat\tau_i^{-1}(\Phi^{(k)}\circ\Psi^{(k)})\circ\hat\tau_i=Id$$ on $\Omega_{\epsilon_k {-\frac{(\ell+2)\del}{\kappa}},r_ke^{-(\ell+2)\delta}}$,
provided $q_0>C(\del_0,\mu)$. \end{lemma}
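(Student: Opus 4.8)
The plan is to prove Lemma~\ref{l-inclusion} by a direct estimate on $\phi^{(k)}$ and its derivatives, reducing the claimed inclusions to the standard "composition with a small holomorphic perturbation of the identity stays in a slightly larger domain'' argument. First I would record, from \re{phik}, that
$$
|||\phi^{(k)}|||_{\epsilon_k-\delta/\kappa,\,r_ke^{-\delta}}\leq C'\delta_k^\mu\delta^{-\tau-\nu}.
$$
Then, using the Cauchy estimates of \rl{cauchy} once more to pass from a bound on $\phi^{(k)}$ to a bound on its first order derivatives, I get on the domain shrunk by one more $\delta/\kappa$ in $\epsilon$ and one more factor $e^{-\delta}$ in $r$ an estimate of the form
$$
|||D\phi^{(k)}|||\leq \frac{C''\delta_k^\mu}{\delta^{\tau+\nu+1}}\cdot\frac1\delta\cdot\frac1{\delta}\ \lesssim\ \delta_k^\mu\delta^{-\tau-\nu-3},
$$
where the extra negative powers of $\delta$ come from the loss in $\epsilon$ (multiplied by $\kappa$) and from the loss in $r$ when differentiating the $v$-variables (the radius drops from $r_ke^{-\ell\delta}$ to $r_ke^{-(\ell+1)\delta}$, costing a factor $(r_ke^{-\ell\delta}(1-e^{-\delta}))^{-1}\lesssim (r_0\delta)^{-1}$, and here one uses $\delta<\ln 2<r_0/2$ together with $r_k>r_0/2$). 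The constant $\tilde D$ in the statement is chosen so that for $\delta<\tilde D$ these Cauchy-estimate manipulations are valid (e.g. $\delta$ small enough that $1-e^{-\delta}\geq \delta/2$ and that the shrunk domains are non-empty, which is where $\delta<\ln 2$ and $\delta<r_0/2$ enter). Condition \re{2C'}, namely $2^{m+2}C'\delta_k^\mu\delta^{-\tau-\nu-3}<1$, then says precisely that the gradient of $\phi^{(k)}_{(i)}$ (and similarly of the inverse) is bounded by, say, $2^{-m-2}$ on the relevant domains.

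Next I would run the usual inverse-function / domain-inclusion bookkeeping. Writing $\Phi^{(k)}_{(i)}=Id+\phi^{(k)}_{(i)}$ with $\phi^{(k)}_{(i)}=\hat\tau_i^{-1}\phi^{(k)}\circ\hat\tau_i$, the point is that $\hat\tau_i$ and $\hat\tau_i^{-1}$ are linear with bounded norm, so $\phi^{(k)}_{(i)}$ obeys essentially the same $|||\cdot|||$ estimate as $\phi^{(k)}$, uniformly in $i=0,\dots,n$ (this is exactly why the $|||\cdot|||$-norm, which already sums over the $\hat\tau_i$-conjugates, was introduced before the lemma). With $\|\phi^{(k)}_{(i)}\|$ and $\|D\phi^{(k)}_{(i)}\|$ both $\leq 2^{-m-2}$ on $\Omega_{\epsilon_k-(\ell+1/2)\delta/\kappa,\,r_ke^{-(\ell+1/2)\delta}}$-type domains, a quantitative open-mapping argument shows: if $(h,v)\in\Omega_{\epsilon_k-(\ell+1)\delta/\kappa,\,r_ke^{-(\ell+1)\delta}}$ then $\Phi^{(k)}_{(i)}(h,v)$ lands in $\Omega_{\epsilon_k-\ell\delta/(2\kappa),\,r_ke^{-\ell\delta/2}}$ (the displacement $|\phi^{(k)}_{(i)}(h,v)|$ being far smaller than the gap between the two domains, which is of order $\delta/\kappa$ in $\epsilon$ and of order $r_k\delta$ in $v$ — again using $r_k>r_0/2$ and $\delta$ small), and that $\Phi^{(k)}_{(i)}$ is injective there with a holomorphic inverse $\Psi^{(k)}_{(i)}=Id-\psi^{(k)}_{(i)}$ defined on $\Omega_{\epsilon_k-(\ell+2)\delta/\kappa,\,r_ke^{-(\ell+2)\delta}}$. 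The contraction estimate \re{contr-i}, i.e. $\psi^{(k)}_{(i)}=\phi^{(k)}_{(i)}(Id-\psi^{(k)}_{(i)})$, solved by the standard fixed-point iteration in the $\|\cdot\|$-norm on the appropriate domain, gives the bound on $\psi^{(k)}_{(i)}$ and the identity $\Phi^{(k)}_{(i)}\circ\Psi^{(k)}_{(i)}=Id$; the conjugation relation $\Phi^{(k)}_{(i)}\circ\Psi^{(k)}_{(i)}=\hat\tau_i^{-1}(\Phi^{(k)}\circ\Psi^{(k)})\circ\hat\tau_i$ is then immediate from the definitions since $\hat\tau_i$ is linear.

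The role of the hypothesis $q_0>C(\delta_0,\mu)$ is to guarantee that \re{2C'} actually holds: since $\delta$ is taken of the order of $\delta_k\sim\delta_0/(k+1)^2$ and $q_k\geq q_02^k$ grows while the Newton scheme forces $\delta_k^\mu$ to be weighed against $\delta^{-\tau-\nu-3}\sim\delta_k^{-\tau-\nu-3}$, one needs the initial exponent budget (controlled through $q_0$, which will govern how small $\delta_0$ and hence $\delta_k$ must be relative to the fixed loss exponents) to be large enough that $\delta_k^{\mu-\tau-\nu-3}2^{m+2}C'<1$ for all $k$; choosing $\mu$ large and then $q_0$ (equivalently $\delta_0$) appropriately makes this automatic. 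I expect the main obstacle to be the careful choice of $\tilde D$ and the verification that all the intermediate domains $\Omega_{\epsilon_k-(\ell+j)\delta/\kappa,\,r_ke^{-(\ell+j)\delta}}$ for $j=0,1,2$ and $\ell\le m$ are genuinely nonempty Reinhardt domains on which the Cauchy estimates of \rl{laurent} and \rl{cauchy} apply — this is purely a matter of tracking that $\epsilon_k>\epsilon_0/2$ and $r_k>r_0/2$ (the consequences of \re{cond1}--\re{cond2} recalled just above) survive the extra $(\ell+2)$-fold shrinking, which is exactly what the constraints $\delta<\tilde D$, $\delta<\ln 2$, $\delta<r_0/2$ are there to ensure — rather than in any deep new estimate.
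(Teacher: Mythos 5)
Your proposal follows essentially the same route as the paper's proof: bound $\phi^{(k)}$ via \re{phik}, bound $D\phi^{(k)}_{(i)}$ by Cauchy estimates on a slightly shrunk domain, check that the displacement $|\phi^{(k)}_{(i)}|$ is much smaller than the gap between consecutive domains $\Omega_{\epsilon_k-(\ell+1)\delta/\kappa,\,r_ke^{-(\ell+1)\delta}}$ and $\Omega_{\epsilon_k-\ell\delta/\kappa,\,r_ke^{-\ell\delta}}$ (this gap estimate, via the Reinhardt structure and the invertibility of $\IM\tau$, is where $\tilde D$ is actually defined and where the exponent $-3$ in \re{2C'} comes from), and then solve $\psi^{(k)}_{(i)}=\phi^{(k)}_{(i)}(Id-\psi^{(k)}_{(i)})$ by the contraction mapping theorem to produce the inverse on the further-shrunk domain. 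The argument is correct and matches the paper's in all essentials.
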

\begin{proof}

We have $1-e^{-\del}> \del/2$   and $\frac{1}{2}<e^{-\del}$ for $0<\del<\ln 2$. Assuming  $\delta<\frac{r_0}{2}$, we have $\del<r_k$
so that
$$\dist(\Delta^d 
_{r_{k}e^{-( 1 
+\ell)\del}},\partial \Delta^d 
_{r_ke^{-\ell\del}})=r_ke^{-\ell\del}(1-e^{-  
\delta})>\frac{ 
\delta^2}{2^{\ell+1}}.$$
On the other hand,
by \re{boundary-of-R}, we have
$$\dist(\Om_{\epsilon_{k} {-\frac{(\ell+  1 
)\del}{\kappa}}},\partial \Om_{\epsilon_{k} {-\frac{\ell\del}{\kappa}}})=\dist(\Om_{\epsilon_{k}
 {-\frac{(\ell+  1 
 )\del}{\kappa}}}^+,\partial \Om_{\epsilon_{k} {-\frac{\ell\del}{\kappa}}}^+).$$
Let $(e^{-2\pi R},e^{-2\pi R'})\in \Om_{\epsilon_{k} {-\frac{(\ell+1)\del}{\kappa}}}^+\times \partial\Om_{\epsilon_{k} {-\frac{\ell\del}{\kappa}}}^+$.  
Since the matrix $(\IM\tau_i^j)_{1\leq i,j\leq n}$ is invertible, there is a constant $\tilde C$ (independent of $k$) such that
$$
|e^{-2\pi R}-e^{-2\pi R'}|\geq \tilde C^{-1}\left| 1-e^{-2\pi (R- R')}\right|>2\pi\tilde C^{-1}|R-R'|\geq  2\pi\tilde C^{-1}\frac{\delta}{\kappa}.
$$
 Let us set $\tilde D:= 2\frac{2\pi}{\tilde C\kappa}$.
Assuming $\del<\tilde D$, we have $\del<\tilde D<2^{\ell+2}\tilde D\leq 2^{m+2}\tilde D$. Assume $2^{m+2}C'\delta_k^{\mu}\del^{-\tau-\nu-3}<1$. Then according to \re{phik}, we have, for  $(h,v)\in \Omega_{\epsilon_k {-\frac{\ell\del}{\kappa}},r_ke^{-\ell\delta}}$,
\al{}\label{dist-min}
|\phi^{(k)}_{(i)}(h,v)|&\leq C'\delta_k^{\mu}\del^{-\tau-\nu}< \frac{\delta^3}{2^{m+2}}<\frac{\del}{2}\frac{\delta^2}{2^{\ell+1}} \\
&< \frac{\del}{2}\dist(\Omega_{\epsilon_k {-\frac{(\ell+ 1 
)\del}{\kappa}},r_ke^{-(\ell+ 1 
)\delta}},\partial \Omega_{\epsilon_k {-\frac{\ell\del}{\kappa}},r_ke^{-\ell\delta}}).\nonumber
\end{align}
By the Cauchy inequality, we have, for $(h,v)\in \Omega_{\epsilon_k {- \frac{(\ell+ 2 
)\del}{\kappa}},r_ke^{-(\ell+ 2 
)\delta}}$
\eq{Dphi}
|D\phi^{(k)}_{(i)}(h,v)|\leq \frac{C'\delta_k^{\mu}\del^{-\tau-\nu}}{\dist(\Omega_{\epsilon_k {-\frac{(\ell+ 2 
)\del}{\kappa}},r_ke^{-(\ell+  2 
)\delta}},\partial \Omega_{\epsilon_k {-\frac{ (\ell+1)\del}{\kappa}},r_ke^{- (\ell+1)\delta}})}\leq  \frac{\del}{2}
<1/2.
\eeq
We can apply the contraction mapping theorem to \re{contr-i}  together with the last inequality of \re{dist-min}. We find a holomorphic solution $\psi^{(k)}_{(i)}$ such that  for $(h,v)\in \Omega_{\epsilon_k {- \frac{(\ell+ 3 
)\del}{\kappa}},r_ke^{-(\ell+  3
)\delta}}$
\begin{align}
|\psi^{(k)}_{(i)}(h,v)|&\leq  ||\phi^{(k)}_{(i)}||_{\epsilon_k  {-\frac{\del\crd
(\ell+
2)}{\kappa}},r_ke^{- (\ell+2)
\delta}}
\leq  C' 
 \delta_k^{\mu}\del^{-\tau-\nu}
 \leq  \frac{\del^3}{2^{m+2}}\label{estimpsi}\\
&<\frac{\del}{2^{m-\ell-1}}\dist(\Omega_{\epsilon_k {-\frac{(\ell+3)\del}{\kappa}},r_ke^{-(\ell+3)\delta}},\partial \Omega_{\epsilon_k {-\frac{(\ell+2)\del}{\kappa}},r_ke^{-(\ell+2)\delta}}).\nonumber
\end{align}
Hence, we have found a mapping $\Psi^{(k)}_{(i)}:=Id-\psi^{(k)}_{(i)}$ such that
$$
\hat\Omega_{\epsilon_k {-\frac{(\ell+3)\del}{\kappa}},r_ke^{-(\ell+3)\delta}}:=\Psi^{(k)}_{(i)}(\Omega_{\epsilon_k {-\frac{(\ell+3)\del}{\kappa}},r_ke^{-(\ell+3)\delta}})\subset \Omega_{\epsilon_k {-\frac{(\ell+2)\del}{\kappa}},r_ke^{-(\ell+2)\delta}}.
$$
Also, $\Phi^{(k)}_{(i)}\circ\Psi^{(k)}_{(i)}=Id$ 
  on   $\Omega_{\epsilon_k {-\frac{(\ell+3)\del}{\kappa}},r_ke^{-(\ell+3)\delta}}$.
Therefore,
$$
\Phi^{(k)}_{(i)}\colon\hat \Omega_{\epsilon_k {-\frac{(\ell+3)\del}{\kappa}},r_ke^{- (\ell+3)\delta}}\to  \Omega_{\epsilon_k {-\frac{(\ell
+1)\del}{\kappa}},r_ke^{-(\ell
+1)\delta}}
$$
is an (onto) biholomorphism such that $(\Phi^{(k)}_{(i)})^{-1}=\Psi^{(k)}_{(i)}$.
\end{proof}
\begin{prop}\label{prop4.20}
Keep conditions on $\del,\mu$ in \rla{l-inclusion} as well as conditions \rea{cond1},\rea{cond2}.
	If $\del_0$ small enough   there is possibly larger
 $\mu>0$  such that if for all $i=1,\ldots, n$, $\tau_i^{(0)}\in  ({\cL A}_{\epsilon_{0},r_{0}})^{n+d}$ with $|||\tau^{\bullet(0)}_i|||_{\epsilon_{0},r_{0}}\leq \del_{0}^{\mu}$, then for  all $k\geq 0$  we have the following~:
\begin{align}
\tau_i^{\bullet(k+1)}\in ({\cL A}_{\epsilon_{k+1},r_{k+1}})^{n+d}, \quad &\tau_i^{\bullet(k+1)}=O(|v|^{2q_k+1}),\nonumber\\
||\tau^{\bullet(k+1)}_i||_{\epsilon_{k+1},r_{k+1}}&\leq \del_{k+1}^{\mu}.\label{tauDotk}
\end{align}
%
%
\end{prop}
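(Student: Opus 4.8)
The plan is to run a single step of the Newton iteration and then close the induction. Assume the inductive hypotheses at level $k$: $\tau_i^{(k)}=\hat\tau_i+\tau_i^{\bullet(k)}$ is holomorphic on $\Om_{\epsilon_k,r_k}$, satisfies the smallness bound \re{4.32-15f} with exponent $\mu$ and vanishing order $q_k+1$ in $v$, and the $\tau_i^{(k)}$ commute pairwise on a product domain. First I would invoke \rl{computeFij} (via \re{compatible}) to see that $F_i:=-J^{2q_k}(\tau_i^{\bullet(k)})$ satisfy the compatibility identity $\cL L_j(F_i)-\cL L_i(F_j)=O(|v|^{2q_k+1})$; since both sides are $2q_k$-jets in $v$, in fact $\cL L_i(F_j)-\cL L_j(F_i)=0$ on the appropriate $\Om^{ij}_{\epsilon',r'}$, so the hypothesis \re{almost-com} of \rp{cohomo} is met. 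Applying \rp{cohomo} with the Diophantine condition of Definition~\ref{dioph} produces $\phi^{(k)}=G\in\tilde{\cL A}_{\epsilon_k-\del/\kappa,\,r_ke^{-\del}}$ solving $\cL L_i(\phi^{(k)})=-J^{2q_k}(\tau_i^{\bullet(k)})$, with the estimate $|||\phi^{(k)}|||_{\epsilon_k-\del/\kappa,\,r_ke^{-\del}}\leq C'\del_k^{\mu}\del^{-\tau-\nu}$ of \re{phik}, and with $\phi^{(k)}(h,v)=O(|v|^{q_k+1})$.

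Next I would fix $\del=\del_k$ and check that the smallness condition \re{2C'}, namely $2^{m+2}C'\del_k^{\mu}\del_k^{-\tau-\nu-3}<1$, holds for all $k$ once $\del_0$ is small and $\mu$ is taken large enough: since $\del_k=\del_0/(k+1)^2$, the left side is $2^{m+2}C'\del_0^{\mu-\tau-\nu-3}(k+1)^{2(\tau+\nu+3-\mu)}$, which is bounded (indeed $\to0$) in $k$ provided $\mu>\tau+\nu+3$ and $\del_0$ is small. This lets me apply \rl{l-inclusion} with $\ell$ running through $0,\dots,m=5$: the maps $\Phi^{(k)}_{(i)}=Id+\phi^{(k)}_{(i)}$ are biholomorphisms between the relevant shrunk domains, with inverses $\Psi^{(k)}_{(i)}=Id-\psi^{(k)}_{(i)}$ obtained from the contraction mapping theorem applied to \re{contr-i}, and with $|\psi^{(k)}_{(i)}|\leq C'\del_k^\mu\del_k^{-\tau-\nu}$. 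In particular $\Phi^{(k)}\colon\Om_{\epsilon_{k+1},r_{k+1}}\to$ (a domain inside $\Om_{\epsilon_k^{(1)},r_k^{(1)}}$) and $(\Phi^{(k)})^{-1}$ is defined on a comparable domain; the five units of room in $m=5$ are exactly what is consumed by: the cohomological solution, the inversion of $\Phi^{(k)}$, and the compositions needed to evaluate $\tau_i^{(k+1)}$.

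The heart of the argument is then the estimate on the new error $\tau_i^{\bullet(k+1)}$ defined by \re{tik+1}. Writing $\tau_i^{(k+1)}=\Phi^{(k)}\circ\tau_i^{(k)}\circ(\Phi^{(k)})^{-1}$ and using \re{sol-cohom-approx}, the linear part $\cL L_i(\phi^{(k)})$ cancels $-J^{2q_k}(\tau_i^{\bullet(k)})$, so $\tau_i^{\bullet(k+1)}$ is a sum of: (a) the truncation tail $\tau_i^{\bullet(k)}-J^{2q_k}(\tau_i^{\bullet(k)})$, which is $O(|v|^{2q_k+1})$ and estimated by Cauchy (\rl{cauchy}) on $\Om_{\epsilon_k,r_k}$ shrunk by $e^{-\del_k}$ in $v$, giving a factor $e^{-q_k\del_k}\|\tau^{\bullet(k)}\|$ type gain; (b) quadratic-type remainders of the form $\phi^{(k)}\circ\hat\tau_i(Id-\psi^{(k)})-\phi^{(k)}\circ\hat\tau_i$ and $\tau_i^{\bullet(k)}(Id-\psi^{(k)})-\tau_i^{\bullet(k)}$ and $\hat\tau_i(Id-\psi^{(k)})-\hat\tau_i+D\hat\tau_i.\psi^{(k)}$, each bounded using the derivative estimate \re{Dphi} and $|\psi^{(k)}|\lesssim\del_k^\mu\del_k^{-\tau-\nu}$ by a product of two small quantities, hence by $C\del_k^{2\mu}\del_k^{-2(\tau+\nu)-\text{(Cauchy loss)}}$; and one must also observe that all these terms are $O(|v|^{q_k+2})$, and combined with the structure one gets $\tau_i^{\bullet(k+1)}=O(|v|^{2q_k+1})$, i.e. $q_{k+1}=2q_k+1$ as prescribed. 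Summing, $\|\tau_i^{\bullet(k+1)}\|_{\epsilon_{k+1},r_{k+1}}\leq C''\del_k^{2\mu}\del_k^{-A}+C''e^{-q_k\del_k}\del_k^{-B}$ for fixed exponents $A,B$ depending only on $n,d,\tau$. The main obstacle is to show this is $\leq\del_{k+1}^\mu=\del_0^\mu(k+2)^{-2\mu}$: for the first term one needs $\del_k^{2\mu-A}\lesssim\del_{k+1}^\mu$, which holds for $\mu$ large and $\del_0$ small since $\del_k/\del_{k+1}=((k+2)/(k+1))^2\leq 4$; for the second (super-exponentially small) term one uses $q_k\geq q_02^k$ and chooses $q_0=q_0(\del_0,\mu)$ large enough that $e^{-q_02^k\del_k}\del_k^{-B}\leq\del_{k+1}^\mu$ for all $k$ — this is where the freedom "provided $q_0>C(\del_0,\mu)$" in \rl{l-inclusion} is spent. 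Finally, pairwise commutativity of $\tau_i^{(k+1)},\tau_j^{(k+1)}$ on a (possibly smaller) product domain is inherited automatically because conjugation by the common $\Phi^{(k)}$ preserves commutation, as already noted before \re{compatible}. This completes the induction and hence the proposition.
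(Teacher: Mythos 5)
Your proposal is correct in substance and follows the paper's overall scheme (solve the cohomological equation via Proposition~\ref{cohomo}, invert $\Phi^{(k)}$ via Lemma~\ref{l-inclusion}, decompose the new error as in \eqref{remainder1}), but it closes the induction by a genuinely different mechanism. You use the classical KAM bookkeeping: the non-tail terms of \eqref{remainder1} are bounded as products of two small quantities, giving $C\del_k^{2\mu}\del_k^{-A}$, and only the truncation tail is handled by the Schwarz/Cauchy gain $e^{-cq_k\del_k}$; you then need both $\del_k^{2\mu-A}\lesssim\del_{k+1}^{\mu}$ (for $\mu$ large, $\del_0$ small) and $q_0$ large. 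The paper never uses the quadratic smallness at all: it first shows $\|\tau_i^{\bullet(k+1)}\|\le C$ with $C$ independent of $k$ (from the domain inclusions alone), uses \eqref{remainder1} only to establish the vanishing order $\tau_i^{\bullet(k+1)}=O(|v|^{2q_k+1})$, and then extracts the entire bound $Ce^{-q_{k+1}\del_k}\le\del_{k+1}^{\mu}$ from a single Schwarz inequality, with the induction closed purely by the geometric growth of $q_k$. Your route is the more standard one and requires the derivative estimates \eqref{Dphi} on every term; the paper's route is slightly leaner at the final step but leans entirely on the jet truncation and the dilation/preconditioning argument after the proposition to make $q_0$ large. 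Two small points to tighten: the term $\hat\tau_i(Id-\psi^{(k)})-\hat\tau_i+D\hat\tau_i.\psi^{(k)}$ vanishes identically since $\hat\tau_i$ is linear, and the intermediate claim that the remainder terms are $O(|v|^{q_k+2})$ should be sharpened to $O(|v|^{2q_k+1})$ term by term (each is a product of $D_v\phi^{(k)}=O(|v|^{q_k})$ or $D_v\tau_i^{\bullet(k)}=O(|v|^{q_k})$ with $\psi^{(k)}=O(|v|^{q_k+1})$), since the full order $2q_k+1$ is exactly what the recursion $q_{k+1}=2q_k+1$ requires.
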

\begin{proof}
Let us first show that $\tau^{(k+1)}_{i}:=\Phi^{(k)}\circ \tau^{(k)}_{i}\circ(\Phi^{(k)})^{-1}$
is well defined on $\Omega_{\epsilon_k {- \frac{\ell\del}{\kappa}},r_ke^{-\ell\delta}}$ for $m\geq\ell\geq 5$ and all $i=1,\ldots, n$.  Here $m$ is fixed from \rl{l-inclusion}.
Indeed, we have
$$
\tau^{(k+1)}_{i}= \hat\tau_i(I+\phi^{(k)}_{(i)})\circ (I+\hat\tau_i^{-1}\tau_{i}^{\bullet(k)})\circ (\Phi^{(k)})^{-1}.
$$
 Since $\tau_i^{\bullet(k)}$ is of order $\geq 2q_{k-1}+1$, Schwarz inequality gives
$$
\|\hat\tau_i^{-1}\tau^{\bullet(k)}_i\|_{\epsilon_k  {- \frac{(\ell-1)\del}{\kappa}},r_ke^{-(\ell-1)\delta}}\leq \max_i \|\hat\tau_i^{-1}\|_{\e_0,r_0}C e^{-(2q_{k-1}+1)\del}\|\tau^{\bullet(k)}_i\|_{\epsilon_k  {- \frac{(\ell-2)\del}{\kappa}},r_ke^{-(\ell-2)\delta}}.
$$
We recall that $\del_0$ satisfies \re{cond1} and \re{cond2}. Setting $\del:=\del_k$ and if $q_0$ large enough, we have
$$
\max_i \|\hat\tau_i^{-1}\|_{\e_0,r_0}C e^{-(2q_{k-1}+1)\del}\leq \max_i \|\hat\tau_i^{-1}\|_{\e_0,r_0}C e^{-\frac{2^{k-1}}{(k+1)^2}q_0\del_0}<1.
$$
According to \rl{l-inclusion},  \re{2C'} and the distance estimate in \re{dist-min}, we have
$$
(I+\hat\tau_i^{-1}\tau_i^{\bullet(k)})\circ (\Phi^{(k)})^{-1}(\Omega_{\epsilon_k {- \frac{\ell\del}{\kappa}},r_ke^{-\ell\delta}})\subset \Omega_{\epsilon_k {- \frac{(\ell-3)\del}{\kappa}},r_ke^{-(\ell-3)\delta}}
$$
so that $\tau^{(k+1)}_i$ is defined on $\Omega_{\epsilon_k  {- \frac{\ell\del}{\kappa}},r_ke^{-\ell\delta}}$ for  $\ell
=4\leq m$ since  $\phi^{(k)}_{(i)}$ is
defined on $\Om_{\e_k-\del/\kappa,r_ke^{-\del/\kappa}}$.
For the rest of the proof, we fix  $\ell=4$.
 From the argument above, we have
$$
\tau_i^{(k+1)}(\Om_{\e_k-4\del/\kappa,r_ke^{-4\del/\kappa}})\subset \hat\tau_i(\Phi_{(i)}^{k}(\Om_{\e_k-\del/\kappa,r_ke^{-\del/\kappa}}))\subset \hat\tau_i(\Om_{\e_k,r_k})\subset \hat\tau_i(\Om_{\e_0,r_0}).
$$
Hence, $\tau_i^{\bullet(k+1)}$ is uniformly bounded on $\Om_{\e_k-4\del/\kappa,r_ke^{-4\del/\kappa}}$ w.r.t $k$~:
$$
\|\tau^{\bullet (k+1)}_i\|_{\e_k {-4\del/\kappa,r_ke^{-4\del}}}
\leq C.
$$

On the other hand, on $\Om_{\e_k-4\del/\kappa,r_ke^{-4\del/\kappa}}$, we have
\begin{align}
	\tau^{\bullet (k+1)}_i&=\hat \tau_i(\phi^{(k)}-\psi^{(k)})+\left(\tau^{\bullet(k)}_i(Id-\psi^{(k)})-\tau^{\bullet(k)}_i\right)\label{remainder1}\\
	&\quad+\left(\phi^{(k)}\left(\hat \tau_i-\hat \tau_i\psi^{(k)}+\tau^{\bullet(k)}_i(Id-\psi^{(k)})\right)-\phi^{(k)}(\hat \tau_i)\right)
\nonumber 
\\
	&\quad+(\phi^{(k)}(\hat \tau_i)-\hat \tau_i\phi^{(k)}+\tau^{\bullet(k)}_i).\nonumber
\end{align}
  The last term is equal to $\tau^{\bullet(k)}_i-J^{2q_k}(\tau^{\bullet(k)}_i)=O(|v|^{2q_k+1})$. 
We also have $\phi^{(k)}-\psi^{(k)}=O(|v|^{2q_k+1})$, $\phi^{(k)}=O(|v|^{q_k+1})$. Thus
\eq{tauk+1=2q+1}
\tau^{\bullet (k+1)}_i=O(|v|^{2q_k+1}).
\eeq

Improving the estimate by the Schwarz inequality, we obtain
$$
\|\tau^{\bullet(k+1)}_i\|_{\epsilon_k  {- \frac{5\del}{\kappa}},r_ke^{-5\delta}}\leq C e^{-q_{k+1}\del}.
$$
We have $e^{-x}<1/x$ for $x>0$. For   $\del:=\del_k< \min\{\tilde D,\ln 2,\frac{r_0}{2}\}$,   let $\mu$ satisfy $2^{m+2}C'\delta_k^{\mu-\tau-\nu-3}<1$, in which case assumptions of \rl{l-inclusion} are satisfied. We then obtain
$$
C\del_{k+1}^{-\mu}e^{-q_{k+1}\del} \leq C \left(\f{(k+2)^2}{\del_0}\right) ^{\mu +1}\f{1}{ q_02^{k}}<1
$$
provided $q_0>C(\del_0,\mu)$.
\end{proof}
Finally, quite classically, since \re{Dphi} and \re{sigma}, the sequence of diffeomorphisms $\{\Phi_k\circ\Phi_{k-1}\circ\cdots\circ \Phi_1\}_k$ converges uniformly on the open set $\Omega_{\frac{\epsilon_{0}}{2},r_{0}e^{-\sigma}}$  to a diffeomorphism $\Phi$ which satisfies
$$
\Phi\circ \hat \tau_i = \tau_i\circ \Phi,\quad i=1,\ldots, n
$$
  provided $q_0\geq C(\del_0,\mu)$ and    $ ||\tau^{\bullet(0)}_i||_{\epsilon_{0},r_{0}}\leq \del_{0}^{\mu}$ for some $\del_0,\mu$ are fixed. The condition $q_0>C(\del_0,\mu,\tau,\nu)$ can be achieved by using finitely many $\Phi_0,\dots, \Phi_{m}$.  Then initial condition $ ||\tau^{\bullet(0)}_i||_{\epsilon_{0},r_{0}}\leq \del_{0}^{\mu}$   can be achieved easily by a dilation in the $v$ variable. Indeed, we apply the dilation in $v$ to the original $\tau_1,\dots,\tau_n$ with $q_0=1$. This allows us to construct $\Phi_0$ in \rp{prop4.20} with $k=0$ and define $\Phi_0\tau_j\Phi_0^{-1}$ to achieve  $q_1\geq 2$. Then $\Phi_0\tau_j\Phi_0^{-1}$, $j=1,\dots, n$ still commute pairwise on $\Om_{\e'}\times\Del_{\e'}^d$ for some $\e'>0$. Applying the procedure again, this allows us to find $\Phi_1,\dots, \Phi_{k-1}$ to achieve $q_k\geq 2^{k}>  C(\del_0,\mu)$. Finally using dilation we can apply the full version of \rp{prop4.20} for all $k$ to construction a new sequence of desired mapping $\Phi_0,\Phi_1,\dots.$
Hence, the torus $C$ has a neighborhood in $M$ biholomorphic to a neighborhood of the zero section in its normal bundle $N_C$  since there is a biholomorphism fixing $\cyl$ that conjugate the deck transformations of the covering of the latter to those of the former.

 \begin{remark}
	The assumption "$T_CM$ splits" can be replaced by the condition that for all $P\in \Z^n$, for all $Q\in \mathbb{N}^d$, $|Q|=1$ and for all $i=1,\ldots, n$
	$$
	\max_{\ell\in \{1,\ldots, n\}}\left |\la_\ell^P 
\mu_{\ell}^Q-\la_{\ell,i} 
\right|   >  \frac{D}{(|P|+1)^{\tau}}.
	$$
\end{remark}

\bibliographystyle{alpha}
\bibliography{geometry,stolo}
\end{document}